\documentclass{siamart220329}
\usepackage{graphicx}
\usepackage{amsmath,amssymb}
\usepackage{caption}
\usepackage{subcaption}
\usepackage{cite}
\usepackage{multicol}
\usepackage{multirow}
\usepackage{array}
\usepackage{cleveref}
\usepackage{enumerate}  
\usepackage{mathtools}
\usepackage{url}
\usepackage{bm}
\usepackage{xfrac}
\usepackage{cancel}
\allowdisplaybreaks
\captionsetup{font=small}

%Common math symbols
\def\W{\Omega}

\def\dt{\Delta t}
\def\dx{\mathrm{d}}

\def \mpM{\mathpzc{M}}

\def\dof{\textrm{DOF}}
\newcommand\vbr[1]{\langle{#1}\rangle_v}
\newcommand\Mip[2]{\langle{#1},{#2}\rangle_{\mpM}}
\DeclareMathAlphabet{\mathpzc}{OT1}{pzc}{m}{it}

%Mathcal letters
\def \mA{\mathcal{A}}
\def \mB{\mathcal{B}}

\def \mE{\mathcal{E}}

\def \mL{\mathcal{L}}

\def \mN{\mathcal{N}}

\def \mR{\mathcal{R}}

%blackboard letters
\def \bbR{\mathbb{R}}

%Bold letters 
\def \bmrho{{\bm\rho}}
\def \bmq{{\bm q}}
\def \bmn{{\bm n}}
\def \bmeta{{\bm\eta}}

\def \bmmu{{\bm\mu}}
\def \bme{{\bm e}}
\def \bms{{\bm s}}
\def \bmF{{\bm F}}

%Markers
\def \tSI{{\textrm{SI}}}
\def \tHL{{\textrm{HL}}}
\def \mtHL{{\mathrm{HL}}}
\def \tMM{{\textrm{MM-HL}}}
\def \tMMSI{{\textrm{MM-L}}}

%Spaces

\def \Vxhc{[V_{x,h}]^3}
\def \Vhat{\widehat{V}_h}

%DG stuff
\def \EIx{\mathrm{E}_{x,h}^{\mathrm{I}}}

\def \lavg{\{\!\!\{}
\def \ravg{\}\!\!\}}
\def \ljmp{[\![}
\def \rjmp{]\!]}
\def \ledge{\big<\!\!\big<}
\def \redge{\big>\!\!\big>}

%Theorems

\newtheorem{prop}{Proposition}
\crefname{prop}{proposition}{propositions}
\newtheorem{remark}{Remark}

%Counters
\numberwithin{figure}{subsection}
\numberwithin{remark}{section}
\numberwithin{table}{subsection}
\numberwithin{equation}{section}
\numberwithin{prop}{section}

\title{On high-order/low-order and micro-macro methods for implicit time-stepping of the BGK model\thanks{Notice: This manuscript has been authored by UT-Battelle, LLC under Contract No. DE-AC05-00OR22725 with the U.S. Department of Energy.  The publisher, by accepting the article for publication, acknowledges that the U.S. Government retains a non-exclusive, paid up, irrevocable, world-wide license to publish or reproduce the published form of the manuscript, or allow others to do so, for U.S. Government purposes. The DOE will provide public access to these results in accordance with the DOE Public Access Plan (\url{http://energy.gov/downloads/doe-public-access-plan}).}}
\author{Cory Hauck, Paul Laiu, and Stefan Schnake}

\author{Cory D.\ Hauck\thanks{Mathematics in Computation Section, Computer Science and Mathematics Division, Oak Ridge National Laboratory, Oak Ridge, TN 37831, USA and Mathematics Department, University of Tennessee, Knoxville, TN 37996, USA (\email{hauckc@ornl.gov}).} 
\and 
M.~Paul Laiu\thanks{Mathematics in Computation Section, Computer Science and Mathematics Division, Oak Ridge National Laboratory, Oak Ridge, TN 37831, USA (\email{laiump@ornl.gov}, \email{schnakesr@ornl.gov}).}
\and Stefan R.\ Schnake\footnotemark[3]}

\begin{document}

\maketitle

\begin{abstract}
    In this paper, a high-order/low-order (HOLO) method is combined with a micro-macro (MM) decomposition to accelerate iterative solvers in fully implicit time-stepping of the BGK equation for gas dynamics.
    The MM formulation represents a kinetic distribution as the sum of a local Maxwellian and a perturbation.
    In highly collisional regimes, the perturbation away from initial and boundary layers is small and can be compressed to reduce the overall storage cost of the distribution.
    The convergence behavior of the MM methods, the usual HOLO method, and the standard source iteration method is analyzed on a linear BGK model.
    Both the HOLO and MM methods are implemented using a discontinuous Galerkin (DG) discretization in phase space, which naturally preserves the consistency between high- and low-order models required by the HOLO approach.
    The accuracy and performance of these methods are compared on the Sod shock tube problem and a sudden wall heating boundary layer problem.
    Overall, the results demonstrate the robustness of the MM and HOLO approaches and illustrate the compression benefits enabled by the MM formulation when the kinetic distribution is near equilibrium.
\end{abstract}

%%%%%%%%%%%%%%%%%%%%%%%%%%%%%%%%%%%%%%%%%%%%%%%%%%%
%%%%%%% Introduction
%%%%%%%%%%%%%%%%%%%%%%%%%%%%%%%%%%%%%%%%%%%%%%%%%%%

\section{Introduction}\label{sect:intro}

The Bhatnagar-Gross-Krook (BGK) \cite{bhatnagar} model is a well-known kinetic equation for simulating rarefied gases via the evolution of a position-velocity phase-space distribution.
It is a simplification of the Boltzmann equation \cite{cercignani1988boltzmann} that relies on a nonlinear relaxation model to approximate the Boltzmann collision operator, the latter being a five-dimensional integral operator that is very expensive to compute.
The BGK collision operator recovers important properties of the Boltzmann operator; namely, it has the same collision invariants, satisfies an entropy dissipation law, and possesses the same local thermal equilibrium that enables it to recover the compressible Euler equations in the limit of infinite collisions \cite{cercignani1988boltzmann}.

Like other collisional kinetic equations, the BGK equation exhibits multiscale phenomena; in particular, it transitions between free streaming flows, when the collision frequency vanishes, to collision dominated fluid flow, when the collision frequency is large.
In fluid regimes, the BGK equation is amenable to a semi-implicit temporal discretization \cite{coron} in which the collision operator is treated implicitly and advection is treated explicitly.
However, under some circumstances, a fully implicit treatment may still be required.
Such situations arise (i) when the advection operator becomes stiff because the discrete maximum microscopic velocity is large, (ii) because of locally refined spatial meshes used to resolve boundary layers, or (iii) when a steady-state solution is desired.

The most straightforward approach to solve the BGK equation in a fully implicit manner is with source iteration (SI), a technique derived from the radiation transport community \cite{adams2002fast}.
The SI method separates the source and sink terms in the BGK collision operator and iterates to solution by lagging the source terms (and also the collision frequency if it depends on the moments of the distribution).
The remaining components of the equation form a linear transport operator that can be inverted by sweeping through the spatial mesh \cite{baker1998sn,pautz2002algorithm}.
These sweeps require that the underlying spatial discretization uses data that is upwind with respect to the microscopic velocity.
From the linear algebra point of view, the upwind formulation produces a lower or upper (block) triangular matrix system that can be solved by back substitution.

The SI procedure is simple and efficient, except when the collision frequency is large.  This regime is important, since it leads to a fluid limit. However, the number of sweep iterations needed to reach convergence becomes prohibitively large in this regime.

There are several approaches to accelerate the SI procedure when the collision frequency is large.
One approach is a high-order/low-order (HOLO) strategy \cite{taitano2014moment} that augments the kinetic equation with moment equations for mass, momentum, and energy and provides an improved estimate of the source term in the SI procedure.
The HOLO method has been shown to significantly reduce the number of iterations needed to converge to the SI solution when the collision frequency is large; however, a careful discretization is needed to ensure consistency between the coupled kinetic-moment system.
An extension of the HOLO method is the general synthetic iterative scheme (GSIS) which utilizes a Navier--Stokes--Fourier low-order solve to give improved contraction estimates for larger timesteps \cite{su2020can,su2020fast,zeng2023general}.

In the current work, we revisit the HOLO approach for computing implicit solutions of the BGK equation.
For simplicity, we restrict ourselves to a reduced phase space that includes one space and one velocity dimension (1D-1V), although the results presented readily generalize.  We combine the micro-macro (MM) and HOLO techniques to develop a method that obtains similar iteration costs as HOLO, but lends to a more memory-efficient discretization in the fluid limit.
The micro-macro decomposition \cite{liu2004boltzmann} is a well-known tool in the analysis and simulation of collisional kinetic equations, and has been used for semi-implicit time discretization of the BGK equations \cite{xiong2015high}.
Here we use it in the context of fully implicit methods.
We employ a discontinuous Galerkin (DG) discretization of the phase space that, unlike finite-difference and finite-volume discretizations, provides the required consistency between the high-order and low-order systems automatically.
We also present analysis on a linear BGK model to highlight the benefits and limitations of the HOLO and MM approaches.  In particular, the HOLO and MM approaches are not completely free of timestep restrictions.

The remainder of the paper is organized as follows.
In Section~\ref{sect:prelim}, we introduce the BGK model and its DG discretization.
In Section~\ref{sect:solvers}, we remind the reader of the SI  procedure and the HOLO strategy, discuss criteria for convergence to the SI solution, and introduce a new MM-HOLO formulation.
In Section~\ref{sec:analysis}, we formally analyze the convergence behavior of each iterative method on a linear BGK model.
In Section~\ref{sec:numerical}, we simulate the standard Sod shock tube problem and a boundary driven test problem.
These results demonstrate the analytical findings from earlier sections of the paper.
Section~\ref{sect:conclusions} contains conclusions and discussion for future work.

%%%%%%%%%%%%%%%%%%%%%%%%%%%%%%%%%%%%%%%%%%%%%%%%%%%
%%%%%%% The BGK Model
%%%%%%%%%%%%%%%%%%%%%%%%%%%%%%%%%%%%%%%%%%%%%%%%%%%

\section{Preliminaries, notation, and the model}\label{sect:prelim}

\subsection{The BGK model}\label{subsec:BGK_model}

The BGK model for a distribution $f = f(x,v,t)$, where $x\in\W_x:=(a,b) \subset \bbR$, $v\in\bbR$, and $t\geq 0$, is given by
\begin{equation}\label{eqn:BGK}
    \partial_t f + v\partial_x f = \nu(M(\bm{\rho}_f)-f), \quad (x,v,t) \in \W_x\times\bbR\times (0,\infty).
\end{equation}
In \eqref{eqn:BGK}, $\bm{\rho}_f = \bm{\rho}_f(x,t)$ is a vector-valued function containing the first three moments of $f$; that is, 
$
    \bm{\rho}_f = \left<\bm{e}f\right>_v$, where $\bme := (1,v,\tfrac{1}{2}v^2)^\top$ and 
$\langle\cdot\rangle_v = \int_\mathbb{R} (\cdot)\,\dx{v}$.
The constant $\nu>0$ is the collision frequency.
We use the notation $\bmrho_w $ to define the map that takes any function $w = w(v)$ to its moments $\bmrho_w= \left<{\bm e}w\right>_v$.
The moments of a distribution $w$ are related to the fluid variables of $w$ via a bijection; namely,
\begin{equation}\label{eqn:fluid_vars}
    \bmrho_w = (n_w,n_wu_w,\tfrac{1}{2}n_w(u_w^2+ \theta_w))^\top,
\end{equation}
where $n_w > 0$ is the number density, $u_w \in \bbR $ is the bulk velocity, and $\theta_w >0$ is the temperature associated to $w$.
It is natural to use the fluid variables to define the fluid equilibrium, $M(\bmrho_w)$, which is a local Maxwellian distribution specified by
\begin{equation}\label{eqn:maxwell_def}
    M(\bmrho_w) = \frac{n_w}{\sqrt{2\pi\theta_w}}\exp\left(\frac{-(v-u_w)^2}{2\theta_w}\right).
\end{equation}
In general, the notation $M(\bmeta)$ is used to specify a Maxwellian with moments given by $\bmeta$ using \eqref{eqn:fluid_vars} and \eqref{eqn:maxwell_def}.  \Cref{eqn:BGK} is equipped with inflow boundary data:
$f = f_-$ on the inflow boundary $\partial\W_-:= \{(a,v):v > 0\}\cup\{(b,v):v < 0\}$.  The outflow boundary is defined by $\partial\W_+ := \{(a,v):v < 0\}\cup\{(b,v):v > 0\}$.
In some cases $f_-$ is allowed to depend on the interior solution $f$, e.g., the far-field boundary condition that is self-consistent by setting
\begin{equation}\label{eqn:far-field-bc}
    f_- = M(\bmrho_f).
\end{equation} 
For brevity, we do not include the dependence on $f$ in the notation of $f_-$.

\subsection{The discontinuous Galerkin formulation}\label{subsec:discrete_notation}

In this subsection, we define the discontinuous Galerkin (DG) finite element method for \eqref{eqn:BGK}.

\subsubsection{Notation and discrete spaces}
 
Let $L^2(\W_x)$ be the standard Lebesgue space of square-integrable functions with canonical inner product $(\cdot\,,\cdot)_{\W_x}$ and norm $\|\cdot\|_{\W_x}$.
Let $\mathcal{T}_{x,h_x}$ be a partition of $\W_x$ with mesh parameter $h_x$ and interior skeleton $\EIx$.
We often use a uniform discretization into $N_x$ cells for $\mathcal{T}_{x,h_x}$.
Given an edge $e=\{x_e\}\in\EIx$, and a function with well-defined left and right traces at $x_e$, denoted by $q^\pm(x_e) = \lim_{x\to x_e^\pm}q(x)$, define the average and jump operators of $q$ respectively by
\begin{equation}\label{eqn:avg_and_jmp}
    \lavg{q}\ravg = \tfrac{1}{2}(q^++q^-),\qquad \ljmp{q}\rjmp = q^- - q^+.  
\end{equation}
We denote by $\ledge\cdot\redge_e$ the point-wise evaluation at $x_e$ where $e\in\EIx$, and let $\ledge\cdot\redge_{\EIx} := \sum_{e\in\EIx}\ledge\cdot\redge_e$. 
Let $V_{x,h}$ be the DG finite element space on $\mathcal{T}_{x,h_x}$ that is given by
\begin{equation}\label{eqn:discrete_space_x}
    V_{x,h} = V_{x,h}^\kappa := \{ q\in L^2(\W_x): q\big|_K\in \mathbb{P}^\kappa(K)~\forall K\in\mathcal{T}_{x,h_x} \},
\end{equation}
where $\mathbb{P}^\kappa(K)$ is the space of polynomials on $K$ with degree less than or equal to $\kappa$.
Define $\Vxhc$ to be the vector-valued DG space where $\bmeta\in\Vxhc$ if and only if each component of $\bmeta$ is in $V_{x,h}$.

To maintain conservation properties at the discrete level, we formulate a method with different trial and test spaces.
For the trial space, we restrict the velocity domain to $v\in\W_v:=[-v_{\max},v_{\max}]$ for some appropriate choice of $v_{\max}>0$ and define $L^2(\W_v)$ and its associated inner product in the same way as $L^2(\W_x)$.
Given an even integer $N_v>0$, we partition $\W_v$ into $N_v$ uniform intervals, where each interval $I_j = (v_{j-1},v_j)$ for $j=1,\ldots,N_v$ is given by 
\begin{equation}\label{eqn:velocity_partition}
  v_j = -v_{\max} + \tfrac{2jv_{\max}}{N_v}\quad\forall j=0,\ldots,N_v.
\end{equation}
Forcing $N_v$ to be even permits sweeping methods to solve the transport operator since the sign of $v$ is constant on each $I_j$.
The trial space $V_{v,h}$ is defined as
\begin{equation}\label{eqn:discrete_space_v_trial}
    V_{v,h} = \{ q\in L^2(\W_v): q\big|_K\in \mathbb{P}^2(I_j)~\forall j=1,\ldots,N_v \}.
\end{equation}
For the test space, we extend the partition of $\W_v$ to $\mathbb{R}$ via a collection of intervals $\hat{I}_j$ where $\hat{I}_1 := (-\infty,v_1)$, $\hat{I}_{N_v} := (v_{N_v-1},\infty)$, and $\hat{I}_j:=I_j$ for $j=2,\ldots,N_v-1$. 
We define $L_{\text{loc}}^2(\mathbb{R})$ to be the space of locally square-integrable functions on $\mathbb{R}$.
The test space $\widehat{V}_{v,h}\subset L_{\text{loc}}^2(\mathbb{R})$ is given by 
\begin{equation}\label{eqn:discrete_space_v_test}
    \widehat{V}_{v,h} = \{ q\in L_{\text{loc}}^2(\bbR): q\big|_K\in \mathbb{P}^2(\hat{I}_j)~\forall j=1,\ldots,N_v \}.
\end{equation}
The test space $\widehat{V}_{v,h}$ is needed so that $\{1,v,v^2\} \subset \widehat{V}_{v,h}$; such containment does not hold if $V_{v,h}$ is the test space.  Additionally, $\widehat{V}_{v,h}$ does not introduce issues with integrability since it is used to test against functions in $V_{v,h}$, which have compact support, or against a Maxwellian. 
There are two natural embeddings for $V_{v,h}$.  One is the embedding $V_{v,h}\hookrightarrow L^2(\bbR)$ by the trivial (zero) extension.
The other is $V_{v,h}\hookrightarrow \widehat{V}_{v,h}$ by extending the polynomials on $I_1$ and $I_{N_v}$ to  $\hat{I}_1$ and $\hat{I}_{N_v}$ respectively.

The DG trial and test spaces on the $(x,v)$ phase space are given by $V_h = V_{x,h}\otimes V_{v,h}$ and $\Vhat = V_{x,h}\otimes \widehat{V}_{v,h}$ respectively.
We define the inner product $(\cdot\,,\cdot)$ and norm $\|\cdot\| = \sqrt{(\cdot,\cdot)}$ for $L^2(\W_x\times\bbR)$ by
\begin{equation}\label{eqn:inner_product_xv}\textstyle
    (w,z) = \int_{\W_x\times\mathbb{R}} w(x,v)z(x,v)\,\dx{x}\,\dx{v}    
\end{equation}
respectively. 
We use the same notation for $L^2(\W)$, where $\W := \W_x\times\W_v$.
\Cref{eqn:inner_product_xv} defines also defines an inner product for $V_{h}$ using the trivial extension.
Integration $\ledge\cdot\redge$ of edges in $\W$ are decomposed into pointwise evaluations in $x$ and one-dimensional integration in $v$.

\subsubsection{Discretization of the transport operator and BGK model}

We discretize the transport operator $w \mapsto v\partial_x w$ with outflow boundary data using
\begin{equation} \label{eqn:transport_disc_def}
    \mA(w,z) := -(vw,\partial_{x,h} z) + \ledge\widehat{vw},\ljmp z\rjmp\redge_{\EIx\times\bbR} + \ledge vw,z\bm{n}\redge_{\partial\W_+},
\end{equation}    
where the numerical flux $\widehat{vw}$ is the standard upwind flux given by
\begin{equation}\label{eqn:numerical_flux}
    \widehat{vw} = v\lavg w\ravg + \tfrac{|v|}{2}\ljmp w\rjmp,
\end{equation}
and $\partial_{x,h}$ denotes the piecewise gradient on $\mathcal{T}_{x,h}$.
Here $\bm{n}=-1$ or $\bm{n}=+1$ depending of whether the $x$-coordinate in $\partial\W_+$ is $a$ or $b$ respectively.
The inflow data $f_-$ is discretized by
\begin{equation} \label{eqn:transport_disc_def_bc}
    \mB(f_-,z) = \ledge vf_-,z\bm{n}\redge_{\partial\W_-}
\end{equation}
with the same definition of $\bm{n}$ on $\partial\W_-$, and we assume $f_-$ is continuous in $V_h$.
We then build the semi-discrete DG scheme for \eqref{eqn:BGK}: \textit{Find $f_h(t)\in V_h$ such that}
\begin{equation}\label{eqn:discrete_BGK}
    (\partial_t f_h,z_h) + \mL(f_h,z_h) = \nu(M(\bmrho_{f_h}),z_h) - \mathcal{B}(f_-,z_h) \quad\forall z_h\in \Vhat
\end{equation}
where
\begin{equation}
    \mL(w_h,z_h) := \mathcal{A}(w_h,z_h) + \nu(w_h,z_h)  \quad\forall w_h\in V_h, z_h\in \Vhat.
\end{equation}

We discretize \eqref{eqn:discrete_BGK} in time via the backward Euler method; extensions to higher-order Runge-Kutta methods and the justification for a fully implicit treatment of \eqref{eqn:discrete_BGK} are discussed in \Cref{subsec:time-stepping,subsec:suddenheat:implicit_need} respectively.
Let $\dt>0$ and $t^{\{k\}}=k\dt$, for $k=\{0,1,2,\dots\}$.
The weak formulation of the backward Euler discretization is: \textit{Given $f^{\{k\}}\in V_h$, find $f^{\{k+1\}}\in V_h$ such that}\footnote{To reduce notation, we suppress the $h$ dependence on the fully-discrete approximation $f^{\{k\}}$.} 
\begin{equation}\label{eqn:BE_BGK}
    (f^{\{k+1\}},z_h) + \dt\mL(f^{\{k+1\}},z_h) = (f^{\{k\}},z_h) + \dt\nu(M(\bmrho_{f^{\{k+1\}}}),z_h) - \dt\mathcal{B}(f_-^{\{k+1\}},z_h)
\end{equation}
\textit{for any $z_h\in \Vhat$}.
Here $f^{\{k\}}\approx f_h(\cdot\,,\cdot,t^{\{k\}})$ and $f_-^{\{k+1\}}$ is the inflow data at $t^{\{k+1\}}$.
Associated to \eqref{eqn:BE_BGK} is residual $\mathcal{R}:V_h\to V_h$ defined for all $z_h\in \Vhat$ by
\begin{equation}\label{eqn:BE_residual}
    (\mathcal{R}w,z_h) = (w,z_h) + \dt\mL(w,z_h)  - (f^{\{k\}},z_h) - \dt\nu(M(\bmrho_{w}),z_h) + \dt\mathcal{B}(f_-,z_h).
\end{equation}

%%%%%%%%%%%%%%%%%%%%%%%%%%%%%%%%%%%%%%%%%%%%%%%%%%%
%%%%%%% Iterative Solvers
%%%%%%%%%%%%%%%%%%%%%%%%%%%%%%%%%%%%%%%%%%%%%%%%%%%

\section{Iterative solvers}\label{sect:solvers}

We now present several iterative methods to solve \eqref{eqn:BE_BGK}.
We first review the source iteration \cite{adams2002fast} and high-order/low-order \cite{taitano2014moment} methods, then introduce two micro-macro approaches.

\subsection{Source iteration}\label{subsec:SI}

The simplest iterative method, called \textit{source iteration} (SI) \cite{adams2002fast}, lags the right-hand side of \eqref{eqn:BE_BGK}: \textit{Given $f^\ell \in V_h$, find $f^{\ell+1}\in V_h$ such that}
\begin{equation}\label{eqn:BE_SI}
    (f^{\ell+1},z_h) + \dt\mL(f^{\ell+1},z_h) = (f^{\{k\}},z_h) + \dt\nu(M(\bmrho_{f^{\ell}}),z_h) - \dt\mathcal{B}(f^{\ell}_-,z_h)
\end{equation}
\textit{for every $z_h\in\Vhat$}.  Here $f^{\ell}_-$  denotes the possible dependence of $f_-$ on $f^\ell$.

For each fixed $\ell$, the operator on the left-hand side of \eqref{eqn:BE_SI} is linear and can be inverted by sweeping \cite{baker1998sn}.  
However, because the Maxwellian on the right-hand side of \eqref{eqn:BE_SI} is lagged, the contraction constant for at least the linear model (see \Cref{sec:analysis}) is bounded by $\frac{\dt\nu}{1+\dt\nu}$.
In highly collisional regimes ($\nu\gg 1$), this contraction constant is close to 1, and thus many iterations are required in order to converge \eqref{eqn:BE_SI}.
Moreover, in higher physical dimensions with unstructured meshes on $\W_x$, the sweeps used to invert \eqref{eqn:BE_SI} are more complicated and expensive \cite{pautz2002algorithm}.
These facts motivate strategies to accelerate SI.  

\subsection{The HOLO method}

One approach to accelerate SI is the \textit{high-order/low-order} (HOLO) method; see \cite{chacon2017multiscale} for a review and \cite{taitano2014moment} for a specific application to the BGK equation. 
The idea of the HOLO method is to decrease the number of transport sweeps in \eqref{eqn:BE_SI} by constructing a better approximation to $\bmrho_{f^{\{k+1\}}}$ than $\bmrho_{f^{\ell}}$.
Because $\bmrho_{f^{\{k+1\}}}\in\Vxhc$ is only a function of $x$ and $t$, a moment-based solve to improve $\bmrho_{f^{\ell}}$ should be cheaper than a sweep in phase space, especially on unstructured meshes in higher dimensions.

We now motivate the HOLO method.
Choosing $z_h = \bm{e}\cdot\bm{q}_h$ where $\bmq_h\in \Vxhc$ in \eqref{eqn:BE_BGK} yields the following moment system for 
$\bmrho_{f^{\{k+1\}}}$:
\begin{align}  \label{eqn:BE_rho_update}  
\begin{split}
    (\bmrho_{f^{\{k+1\}}},\bmq_h)_{\W_x} 
    &+ \dt \mathcal{A}(f^{\{k+1\}},\bm{e}\cdot\bm{q}_h) 
    = (\bmrho_{f^{\{k\}}},\bmq_h)_{\W_x}  
    - \dt \mB(f_-^{\{k+1\}},\bme\cdot\bmq_h).
\end{split} 
\end{align}
Upon writing $f^{\{k+1\}} = M(\bmrho_{f^{\{k+1\}}}) + (f^{\{k+1\}} - M(\bmrho_{f^{\{k+1\}}}))$ and rearranging terms, \eqref{eqn:BE_rho_update} becomes
\begin{align}  \label{eqn:BE_rho_update:2}  
\begin{split}
    (\bmrho_{f^{\{k+1\}}},\bmq_h)_{\W_x} 
    &+ \dt \mE(\bmrho_{f^{\{k+1\}}},\bmq_h)
    = (\bmrho_{f^{\{k\}}},\bmq_h)_{\W_x} - \dt [ \mathcal{A}(f^{\{k+1\}},\bme\cdot\bmq_h)  \\
    &\qquad - \mE(\bmrho_{f^{\{k+1\}}},\bmq_h) ]
    - \dt \mB(f_-^{\{k+1\}},\bme\cdot\bmq_h),
\end{split} 
\end{align}
where
\begin{align}\label{eqn:euler_disc_def}
\begin{split}
    \mE(\bmeta,\bmq_h) &:= \mA(M(\bmeta),\bme\cdot\bmq_h) \\
    &\phantom{:}= 
    -(\bmF(\bmeta),\partial_{x,h} \bmq_h)_{\W_x} + \ledge\widehat{\bmF(\bmeta)},\ljmp\bmq_h\rjmp\redge_{\EIx} + \ledge\bme vM(\bmeta) ,\bmq_h\bmn\redge_{\partial\W_+},
\end{split}
\end{align}
$\bm{F}$ is the flux associated with the Euler equations, given by
\begin{equation}\label{eqn:euler_flux}
    \bmF(\bmeta)
        = \left<\bme vM(\bmeta)\right>_v = (nu,nu^2+n\theta,\tfrac{1}{2}nu(u^2+3\theta))^\top,
\end{equation}
and $\widehat{\bmF(\bmeta)}$ is the numerical flux associated to $\bmF$, given by
\begin{equation}\label{eqn:euler_kinetic_flux}
        \widehat{\bmF(\bmeta)} = \lavg\bmF(\bmeta)\ravg + \ljmp\left<|v|\bme M(\bmeta)\right>_v\rjmp.
\end{equation}

The HOLO method replaces $\bmrho_{f^{\{k+1\}}}$ in \eqref{eqn:BE_rho_update:2} by a new update $\bmrho^{\ell+1}$ that is computed using $f^\ell$ to construct the right-hand side of \eqref{eqn:BE_rho_update:2}.
The method is as follows: \textit{Given $f^\ell\in V_h$, find $f^{\ell+1}\in V_h$ such that}
\begin{subequations}\label{eqn:BE_HOLO}
\begin{align} \label{eqn:BE_HOLO:HI} 
\begin{split}  
    (f^{\ell+1},z_h) + \dt\mL(f^{\ell+1},z_h) &= (f^{\{k\}},z_h) + \dt\nu(M(\bmrho^{\ell+1}),z_h) - \dt\mathcal{B}(f^{\ell}_-,z_h)
\end{split}
\end{align}
\textit{for all $z_h\in \Vhat$, where $\bmrho^{\ell+1}\in \Vxhc$ is given for any $\bmq_h\in\Vxhc$ by}
\begin{align}  \label{eqn:BE_HOLO:LO}  
\begin{split}
    (\bmrho^{\ell+1},\bmq_h)_{\W_x} 
    &+ \dt \mE(\bmrho^{\ell+1},\bmq_h) 
    = (\bmrho_{f^{\{k\}}},\bmq_h)_{\W_x}  
     \\
    &\quad - \dt \big[\mathcal{A}(f^{\ell},\bm{e}\cdot\bm{q}_h) - \mE(\bmrho_{f^\ell},\bmq_h)\big] - \dt \mB(f^\ell_-,\bme\cdot\bmq_h).
\end{split} 
\end{align}
\end{subequations}

\begin{remark}\label{rmk:boundary_condtions}
    For calculations with far-field boundary conditions \eqref{eqn:far-field-bc}, in which $f_-$ depends only on moment data, we modify the method slightly by replacing $\mathcal{B}(f^{\ell}_-,z_h)$ in \eqref{eqn:BE_HOLO} with $\mathcal{B}(f^{\ell+1}_-,z_h)$ that is built from $\bmrho^{\ell+1}$.  
    In \eqref{eqn:BE_HOLO:LO}, $\mathcal{B}(f_-^{\ell+1},z_h)$ is moved to the left-hand side and treated as part of the solve.
    This small modification is a choice that has little bearing on the numerical results, as long as $\mathcal{B}$ is treated consistently in \eqref{eqn:BE_HOLO:HI} and \eqref{eqn:BE_HOLO:LO}.
\end{remark}

The perturbative flux $\mathcal{A}(f^{\ell},\bm{e}\cdot\bm{q}_h)
    - \mE(\bmrho_{f^\ell},\bmq_h)$ on the right-hand side of \eqref{eqn:BE_HOLO:LO} is a DG discretization of the moments
\begin{equation}
\label{eqn:heat_flux}
((0,0,\big<\tfrac{1}{2}(v-u_{f^\ell})^3f^\ell\big>_v)^\top,\bmq_h)_{\W_x}.
\end{equation}
Since the last component of \eqref{eqn:heat_flux} is the heat flux associated to $f^\ell$,
\eqref{eqn:BE_HOLO:LO} can be viewed as an approximation to the moment system for $\bmrho_{f^{\{k+1\}}}$, see \eqref{eqn:BE_rho_update:2}, in which the heat flux is calculated from the previous iterate.  

The following proposition shows that, in the limit $\ell \to \infty$, the moments generated by \eqref{eqn:BE_HOLO:LO} match the moments of the kinetic update in \eqref{eqn:BE_HOLO:HI}, provided a structural condition on the Euler flux holds.

\begin{prop}\label{prop:HOLO_moment_convergence}
    Let $\mE^*:\Vxhc\to\Vxhc$ be defined by 
    \begin{equation}\label{eqn:euler_timestep_map}
        (\mE^*\bmeta,\bmq_h) = (1+{\dt\nu})(\bmeta,\bmq_h) + \Delta t\mE(\bmeta,\bmq_h) \quad\forall\bmq_h\in\Vxhc.
    \end{equation}
    Assume that $\mE^*$ is injective on $\Vxhc$.
    Let $\{f^\ell,\bmrho^\ell\}_\ell$ be defined from \eqref{eqn:BE_HOLO}.
    Suppose that $f^\ell \to f^*\in V_h$ and $\bmrho^\ell\to\bmrho^*\in\Vxhc$ as $\ell\to\infty$.
    Then $\bmrho_{f^*} = \bmrho^*$.
\end{prop}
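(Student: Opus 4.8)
The plan is to pass to the limit $\ell\to\infty$ in the two equations of \eqref{eqn:BE_HOLO}, test the limiting kinetic equation \eqref{eqn:BE_HOLO:HI} against the moment test functions $z_h=\bme\cdot\bmq_h$, and then subtract the limiting low-order equation \eqref{eqn:BE_HOLO:LO} so that every term except those built from $\mE^*$ cancels.

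First I would justify the limit passage. Since $V_h$ and $\Vxhc$ are finite-dimensional, the assumed convergences $f^\ell\to f^*$ and $\bmrho^\ell\to\bmrho^*$ hold in every norm. The forms $\mA$, $\mL$, $\mE$, $\mathcal{B}$ and the moment map $w\mapsto\bmrho_w$ are linear, hence continuous, and $\bmeta\mapsto M(\bmeta)$ is continuous into $L^2(\bbR)$ at any $\bmeta$ with positive density and temperature; we may take this to hold at $\bmrho^*$, since the iterates $M(\bmrho^{\ell+1})$ appearing in \eqref{eqn:BE_HOLO:HI} are presumed well defined. Letting $\ell\to\infty$ in \eqref{eqn:BE_HOLO} then yields, for all $z_h\in\Vhat$ and $\bmq_h\in\Vxhc$,
\begin{align*}
  (f^*,z_h)+\dt\mL(f^*,z_h) &= (f^{\{k\}},z_h)+\dt\nu(M(\bmrho^*),z_h)-\dt\mathcal{B}(f^*_-,z_h),\\
  (\bmrho^*,\bmq_h)_{\W_x}+\dt\mE(\bmrho^*,\bmq_h) &= (\bmrho_{f^{\{k\}}},\bmq_h)_{\W_x}-\dt\big[\mA(f^*,\bme\cdot\bmq_h)-\mE(\bmrho_{f^*},\bmq_h)\big]-\dt\mathcal{B}(f^*_-,\bme\cdot\bmq_h),
\end{align*}
where $f^*_-$ is the corresponding limit of $f^\ell_-$ (this step is unchanged under the \Cref{rmk:boundary_condtions} variant, where both inflow terms are instead built from $\bmrho^*$).

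Next, in the first of these I would take $z_h=\bme\cdot\bmq_h$, which is admissible precisely because $\{1,v,v^2\}\subset\widehat{V}_{v,h}$. Using $\mL=\mA+\nu\,\mathrm{Id}$, the identities $(w,\bme\cdot\bmq_h)=(\bmrho_w,\bmq_h)_{\W_x}$ for $w\in V_h$ and $(M(\bmeta),\bme\cdot\bmq_h)=(\bmeta,\bmq_h)_{\W_x}$ (since $\bmrho_{M(\bmeta)}=\bmeta$), the splitting $\mA(f^*,\bme\cdot\bmq_h)=\mE(\bmrho_{f^*},\bmq_h)+\big[\mA(f^*,\bme\cdot\bmq_h)-\mE(\bmrho_{f^*},\bmq_h)\big]$, and the definition \eqref{eqn:euler_timestep_map} of $\mE^*$, the limiting kinetic equation becomes
\begin{equation*}
  (\mE^*\bmrho_{f^*},\bmq_h)+\dt\big[\mA(f^*,\bme\cdot\bmq_h)-\mE(\bmrho_{f^*},\bmq_h)\big] = (\bmrho_{f^{\{k\}}},\bmq_h)_{\W_x}+\dt\nu(\bmrho^*,\bmq_h)_{\W_x}-\dt\mathcal{B}(f^*_-,\bme\cdot\bmq_h).
\end{equation*}
The limiting low-order equation gives an expression for the bracketed perturbative-flux term; substituting it and cancelling the common $(\bmrho_{f^{\{k\}}},\bmq_h)_{\W_x}$ and inflow contributions leaves $(\mE^*\bmrho_{f^*},\bmq_h)=(1+\dt\nu)(\bmrho^*,\bmq_h)_{\W_x}+\dt\mE(\bmrho^*,\bmq_h)=(\mE^*\bmrho^*,\bmq_h)$ for every $\bmq_h\in\Vxhc$. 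Since $\mE^*\bmrho_{f^*}-\mE^*\bmrho^*\in\Vxhc$ and $(\cdot,\cdot)_{\W_x}$ is an inner product on $\Vxhc$, this forces $\mE^*\bmrho_{f^*}=\mE^*\bmrho^*$, and injectivity of $\mE^*$ gives $\bmrho_{f^*}=\bmrho^*$.

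I expect the only delicate part to be the bookkeeping in the cancellation: one must verify that the perturbative-flux term, the $(\bmrho_{f^{\{k\}}},\bmq_h)_{\W_x}$ term, and the inflow term appearing in the two limiting equations are literally the same objects so that they drop out. The rest is structural — recognizing $z_h=\bme\cdot\bmq_h$ as an admissible test function and reading off $\mE^*$ from \eqref{eqn:euler_timestep_map} — with the positivity needed to make sense of $M(\bmrho^*)$ being the one ingredient tacitly assumed.
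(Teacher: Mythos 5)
Your proposal is correct and follows essentially the same route as the paper's proof: pass to the limit in both equations of \eqref{eqn:BE_HOLO}, test the limiting kinetic equation with $z_h=\bme\cdot\bmq_h$, subtract the limiting low-order equation so that the perturbative flux, the $(\bmrho_{f^{\{k\}}},\bmq_h)_{\W_x}$ term, and the inflow term cancel, and conclude $\mE^*\bmrho_{f^*}=\mE^*\bmrho^*$, hence $\bmrho_{f^*}=\bmrho^*$ by injectivity. Your write-up simply fills in the bookkeeping the paper summarizes as ``after some cancellations,'' together with the (reasonable) tacit continuity/positivity remarks needed to pass to the limit in $M(\bmrho^{\ell+1})$.
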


\begin{proof}
    Let $f_-^* = \lim_{\ell\to\infty} f_-^\ell$.
    Taking the limit of \eqref{eqn:BE_HOLO} as $\ell\to\infty$ yields
    \begin{align} 
    \label{eqn:HOLO_moment_convergence:1} 
        (f^*,z_h) + \dt\mL(f^*,z_h) &= (f^{\{k\}},z_h) + {\dt\nu}(M(\bmrho^*),z_h) - \dt\mathcal{B}(f_-^*,z_h),\\
    \label{eqn:HOLO_moment_convergence:2}
    \begin{split}
        (\bmrho^*,\bmq_h)_{\W_x} + \dt \mE(\bmrho^*,\bmq_h) &= (\bmrho_{f^{\{k\}}},\bmq_h)_{\W_x}
        - \dt[\mA(f^*,\bme\cdot\bmq_h)
        -\mE(\bmrho_{f^*},\bmq_h)] \\
        &\quad -\dt \mB(f_-^*,\bme\cdot\bmq_h),
    \end{split} 
    \end{align}
    for any $z_h\in\Vhat$ and $\bmq_h\in\Vxhc$.
    Choosing $z_h=\bme\cdot\bmq_h\in\Vhat$ in \eqref{eqn:HOLO_moment_convergence:1} and then subtracting \eqref{eqn:HOLO_moment_convergence:2} gives, after some cancellations,  $\mE^*\bmrho_{f^*} = \mE^*\bmrho^*$.
    Therefore the assumption that $\mE^*$ is injective yields the intended result.
    The proof is complete.
\end{proof}

\begin{remark}~
    \begin{enumerate}
        \item The numerical experiments in \Cref{sec:numerical}, specifically \Cref{tab:holo_vs_SI_break}, suggest that the condition on $\mE^*$ is necessary as well as sufficient.
        \item The proof of \Cref{prop:HOLO_moment_convergence} allows $\mE$ to be inconsistent with $\mA$, that is, $\mE(\bmeta,\bmq_h)\neq \mA(M(\bmeta),\bme\cdot\bmq_h)$. This gives the opportunity for choosing different discretizations for $\mE$ and $\mA$, but may degrade the performance of HOLO. 
    \end{enumerate}
\end{remark}

When applying accelerators, it is important that the accelerated iterations converge to a solution of \eqref{eqn:BE_BGK}. 
In \cite{taitano2014moment}, where the HOLO formulation was discretized using finite differences, several consistency terms were added in order to preserve this property. 
An advantage of the DG discretization is that this desired consistency is automatically satisfied.
This result, which follows from \Cref{prop:HOLO_moment_convergence}, is shown below.

\begin{prop}\label{prop:HOLO_vs_SI_equiv}
    Let $\{f^\ell,\bmrho^\ell\}_\ell$ be defined from \eqref{eqn:BE_HOLO}.
    Suppose that $f^\ell \to f^*\in V_h$ and $\bmrho^\ell\to\bmrho^*\in\Vxhc$ as $\ell\to\infty$, and $\bmrho^*=\bmrho_{f^*}$.
    Then $f^*$ is a solution to \eqref{eqn:BE_BGK}.
\end{prop}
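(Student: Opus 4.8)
The plan is to pass to the limit in the high-order update \eqref{eqn:BE_HOLO:HI} and then use the hypothesis $\bmrho^* = \bmrho_{f^*}$ to identify the limiting identity with the backward Euler system \eqref{eqn:BE_BGK}. First I would observe that every space in play is finite dimensional, that the forms $\mL$, $\mA$, $\mB$ and the moment map $w\mapsto\bmrho_w$ are continuous, and that $\bmeta\mapsto M(\bmeta)$ is continuous on the set of admissible moments (those with $n>0$ and $\theta>0$); hence, provided the limiting state $\bmrho^*$ is admissible, $M(\bmrho^\ell)\to M(\bmrho^*)$ and $f_-^\ell\to f_-^*$. Taking $\ell\to\infty$ in \eqref{eqn:BE_HOLO:HI} — the same computation already used to obtain \eqref{eqn:HOLO_moment_convergence:1} in the proof of \Cref{prop:HOLO_moment_convergence} — yields
\begin{equation*}
(f^*,z_h) + \dt\mL(f^*,z_h) = (f^{\{k\}},z_h) + \dt\nu(M(\bmrho^*),z_h) - \dt\mB(f_-^*,z_h) \qquad \forall z_h\in\Vhat.
\end{equation*}

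Next I would substitute the hypothesis $\bmrho^*=\bmrho_{f^*}$, so that $M(\bmrho^*)=M(\bmrho_{f^*})$, which turns the displayed identity into exactly \eqref{eqn:BE_BGK} with $f^{\{k+1\}}$ replaced by $f^*$; equivalently, $\mathcal{R}f^*=0$ for the residual \eqref{eqn:BE_residual}. The only place the boundary data require a remark is the matching $f_-^* = f_-^{\{k+1\}}$: for fixed inflow data one has $f_-^\ell = f_-^{\{k+1\}}$ for all $\ell$, so this is immediate, while for the self-consistent far-field condition of \Cref{rmk:boundary_condtions} one uses $f_-^{\ell+1}=M(\bmrho^{\ell+1})$, whence $f_-^* = M(\bmrho^*) = M(\bmrho_{f^*})$, which is again the correct datum associated with $f^{\{k+1\}}=f^*$.

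I do not expect a genuine obstacle: the argument is a routine continuity and limit passage, and its only real input beyond the assumed convergence of the iterates is the identification $\bmrho^*=\bmrho_{f^*}$, which is precisely what \Cref{prop:HOLO_moment_convergence} provides under the injectivity of $\mE^*$. The single point worth a sentence of care is admissibility of the limiting macroscopic state $\bmrho^*$, needed so that $M(\bmrho^*)$ is well defined and the continuity of $\bmeta\mapsto M(\bmeta)$ may be invoked; in practice this is inherited from admissibility of the iterates $\bmrho^\ell$.
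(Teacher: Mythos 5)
Your argument is correct and is essentially the paper's own proof: the paper also passes to the limit in \eqref{eqn:BE_HOLO:HI} (this is exactly \eqref{eqn:HOLO_moment_convergence:1} from the proof of \Cref{prop:HOLO_moment_convergence}) and then substitutes $\bmrho^*=\bmrho_{f^*}$ to recover \eqref{eqn:BE_BGK}. Your additional remarks on continuity of $M(\cdot)$, admissibility of $\bmrho^*$, and the matching of the boundary data $f_-^*$ are sensible points of care that the paper leaves implicit, but they do not constitute a different route.
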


\begin{proof}
    Substituting $\bmrho_{f^*}$ for $\bmrho^*$ in \eqref{eqn:HOLO_moment_convergence:1} immediately implies the result.
\end{proof}

\subsection{The micro-macro method}\label{subsec:MM_method}

In the micro-macro (MM) formulation, $f$ is decomposed as $f=M(\bmrho)+g$ where $\bmrho=\bmrho_f$.  
The function $g = g(x,v,t)$ is called the \textit{micro distribution} and satisfies $\langle\bme g\rangle_v=0$.
The BGK model \eqref{eqn:BGK} with the MM ansatz can be split into a coupled system:
\begin{subequations}\label{eqn:BGK_MM}
\begin{align}
    \partial_t\bmrho + \partial_x\bm{F}(\bmrho) &= -\partial_x\big<\bme vg\big>_v, \label{eqn:BGK_MM:rho}  \\
    \partial_t g + v\partial_x g + \nu g &= -\partial_t M(\bmrho)  -v\partial_x M(\bmrho).\label{eqn:BGK_MM:g}
\end{align}
\end{subequations}
When $\nu \gg 1$, the magnitude of $g$ away from initial and boundary layers is $\mathcal{O}(\nu^{-1})$.  In such areas, the MM decomposition is often preferred since the discretization of $g$ can be compressed to reduce the overall degrees of freedom required for an accurate numerical solution of the MM system \eqref{eqn:BGK_MM}.
This fact was demonstrated numerically in \cite{endeve2022conservative} for the Vlasov-Poisson system with a Lenard-Bernstein collision operator.   

The condition $\langle\bme g\rangle_v=0$ is automatically satisfied in \eqref{eqn:BGK_MM}, but can be lost if care is not taken when discretizing in both phase space and time.
In \cite{endeve2022conservative}, the authors develop spatial and temporal (implicit-explicit) methods that maintain this condition discretely in time.

Using the backward Euler discretization of \eqref{eqn:BGK_MM}, we propose the following discrete MM analog to \eqref{eqn:BE_BGK}: \textit{Find $\bmrho^{\{k+1\}}\in \Vxhc$ and $g^{\{k+1\}}\in V_h$ such that}
\begin{subequations}\label{eqn:BGK_MM_BE}
\begin{align}\label{eqn:BGK_MM_BE:rho}
\begin{split}
    (\bmrho^{\{k+1\}},\bmq_h)_{\W_x} &+ \dt\mE(\bmrho^{\{k+1\}},\bmq_h) = (\bmrho^{\{k\}},\bmq_h)_{\W_x} -\dt\mA(g^{\{k+1\}},\bme\cdot\bmq_h) \\
    &\!\!\!\!\!\!-\dt\mB(f_-^{\{k+1\}},\bme\cdot\bmq_h),
\end{split} \\
\begin{split}
    (g^{\{k+1\}},z_h) &+ \dt\mL(g^{\{k+1\}},z_h)  = (g^{\{k\}},z_h)
    +(M(\bmrho^{\{k\}}),z_h) \\
    &\!\!\!\!\!\!-(M(\bmrho^{\{k+1\}}),z_h)
    -\dt\mA(M(\bmrho^{\{k+1\}}),z_h)
    -\dt\mB(f_-^{\{k+1\}},z_h),
\end{split}
\label{eqn:BGK_MM_BE:g}
\end{align}
\end{subequations}
\textit{for all $\bmq_h\in \Vxhc$ and $z_h\in \Vhat$.}
In \eqref{eqn:BGK_MM_BE}, $f_-^{\{k+1\}}$ is built with the data from $M(\bmrho^{\{k+1\}}) + g^{\{k+1\}}$.
We show that $g^{\{k+1\}}$ satisfies the zero-moment condition.

\begin{prop}\label{prop:BGK_MM_BE_cons}
    Suppose $\langle\bme g^{\{k\}}\rangle_v=0$. If $\bmrho^{\{k+1\}}\in \Vxhc$ and $g^{\{k+1\}}\in V_h$ satisfy \eqref{eqn:BGK_MM_BE}, then $\langle\bme g^{\{k+1\}}\rangle_v=0$.
\end{prop}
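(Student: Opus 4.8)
The plan is to derive a moment equation for $g^{\{k+1\}}$ by testing the micro equation \eqref{eqn:BGK_MM_BE:g} against $z_h=\bme\cdot\bmq_h$ for arbitrary $\bmq_h\in\Vxhc$. This is admissible precisely because $\{1,v,v^2\}\subset\widehat{V}_{v,h}$, so $\bme\cdot\bmq_h\in\Vhat$. I then want to show that the resulting identity, once combined with the macro equation \eqref{eqn:BGK_MM_BE:rho}, collapses to $(1+\dt\nu)(\bmrho_{g^{\{k+1\}}},\bmq_h)_{\W_x}=0$ for every $\bmq_h$, where $\bmrho_{g^{\{k+1\}}}:=\langle\bme g^{\{k+1\}}\rangle_v$.

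First I would reduce each term of the tested equation to an $x$-integral of $v$-moments. Using \eqref{eqn:inner_product_xv} with the trivial extension, $(g^{\{k\}},\bme\cdot\bmq_h)=(\bmrho_{g^{\{k\}}},\bmq_h)_{\W_x}=0$ by the hypothesis $\langle\bme g^{\{k\}}\rangle_v=0$, and $(g^{\{k+1\}},\bme\cdot\bmq_h)=(\bmrho_{g^{\{k+1\}}},\bmq_h)_{\W_x}$. Since $\langle\bme M(\bmeta)\rangle_v=\bmeta$ by \eqref{eqn:fluid_vars}, the two Maxwellian inner products become $(\bmrho^{\{k\}},\bmq_h)_{\W_x}$ and $(\bmrho^{\{k+1\}},\bmq_h)_{\W_x}$. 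The definition \eqref{eqn:euler_disc_def} gives $\mA(M(\bmrho^{\{k+1\}}),\bme\cdot\bmq_h)=\mE(\bmrho^{\{k+1\}},\bmq_h)$, and writing $\mL=\mA+\nu(\cdot,\cdot)$ isolates the term $\dt\nu(\bmrho_{g^{\{k+1\}}},\bmq_h)_{\W_x}$. The only term that does not reduce to moment data is $\dt\mA(g^{\{k+1\}},\bme\cdot\bmq_h)$, and the boundary term stays $\dt\mB(f_-^{\{k+1\}},\bme\cdot\bmq_h)$. Collecting everything, the tested micro equation reads
\begin{equation*}
(1+\dt\nu)(\bmrho_{g^{\{k+1\}}},\bmq_h)_{\W_x} + \dt\mA(g^{\{k+1\}},\bme\cdot\bmq_h) = (\bmrho^{\{k\}},\bmq_h)_{\W_x} - (\bmrho^{\{k+1\}},\bmq_h)_{\W_x} - \dt\mE(\bmrho^{\{k+1\}},\bmq_h) - \dt\mB(f_-^{\{k+1\}},\bme\cdot\bmq_h).
\end{equation*}

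By \eqref{eqn:BGK_MM_BE:rho}, after moving $\dt\mA(g^{\{k+1\}},\bme\cdot\bmq_h)$ to the left-hand side, the right-hand side above is exactly $\dt\mA(g^{\{k+1\}},\bme\cdot\bmq_h)$, so the advection terms cancel and $(1+\dt\nu)(\bmrho_{g^{\{k+1\}}},\bmq_h)_{\W_x}=0$ for all $\bmq_h\in\Vxhc$. Since the map $w\mapsto\langle\bme w\rangle_v$ sends $V_h$ into $\Vxhc$, we have $\bmrho_{g^{\{k+1\}}}\in\Vxhc$, so taking $\bmq_h=\bmrho_{g^{\{k+1\}}}$ and using $1+\dt\nu>0$ forces $\|\bmrho_{g^{\{k+1\}}}\|_{\W_x}=0$, i.e.\ $\langle\bme g^{\{k+1\}}\rangle_v=0$.

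I do not expect a genuine obstacle: the cancellation is engineered into the consistent pairing of \eqref{eqn:BGK_MM_BE:rho} and \eqref{eqn:BGK_MM_BE:g}. The only care needed is bookkeeping — confirming $\bme\cdot\bmq_h\in\Vhat$, keeping track of which inner product and which extension of $V_{v,h}$ is in play when passing to moments, and verifying that the $\mA(g^{\{k+1\}},\cdot)$, $\mE(\bmrho^{\{k+1\}},\cdot)$, and $\mB(f_-^{\{k+1\}},\cdot)$ terms produced by the two equations match term by term so that they cancel. In particular, the argument is insensitive to the precise definition of $f_-^{\{k+1\}}$, since the identical $\mB$-term appears in both equations.
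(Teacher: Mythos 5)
Your proof is correct and follows essentially the same route as the paper: test the micro equation with $z_h=\bme\cdot\bmq_h$, use $\mA(M(\bmrho^{\{k+1\}}),\bme\cdot\bmq_h)=\mE(\bmrho^{\{k+1\}},\bmq_h)$ together with the hypothesis on $g^{\{k\}}$, and combine with the macro equation so that all terms cancel except $(1+\dt\nu)(\bmrho_{g^{\{k+1\}}},\bmq_h)_{\W_x}=0$. The only cosmetic difference is that the paper subtracts \eqref{eqn:BGK_MM_BE:rho} from the tested micro equation rather than substituting it into the right-hand side, which is the same computation.
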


\begin{proof}
    For brevity, denote $\widehat{\bmrho}:=\bmrho^{\{k+1\}}$, $\widehat{g}:=g^{\{k+1\}}$.
    Choosing $z_h=\bme\cdot\bmq_h\in \Vhat$ in \eqref{eqn:BGK_MM_BE:g}, recalling \eqref{eqn:euler_disc_def}, and rearranging terms gives
    \begin{align}
    \begin{split}
     (1+{\dt\nu})(\bmrho_{\widehat{g}},\bmq_h)_{\W_x} + (\widehat{\bmrho},\bmq_h)_{\W_x} &+  \dt\mE(\widehat{\bmrho},\bmq_h) = (\bmrho^{\{k\}},\bmq_h)_{\W_x} \\
     &\quad - \dt\mA(\widehat{g},\bme\cdot\bmq_h) -\dt \mB(f_-^{\{k+1\}},\bme\cdot\bmq_h).
    \end{split}
    \label{eqn:BGK_MM_BE_cons:2}    
    \end{align}
    Subtracting \eqref{eqn:BGK_MM_BE:rho} from \eqref{eqn:BGK_MM_BE_cons:2} yields
    $\big(1+{\dt\nu}\big)(\bmrho_{\widehat{g}},\bmq_h)_{\W_x} = 0$ for all $\bmq_h\in\Vxhc$, which immediately implies that $\langle\bme \widehat{g}\,\rangle_v=\bmrho_{\widehat{g}}=0$.  The proof is complete.
\end{proof}

We now consider two iterative methods to solve \eqref{eqn:BGK_MM_BE}.
For the first method, we lag right-hand side of \eqref{eqn:BGK_MM_BE:rho}. 
We denote this scheme by \tMMSI: \textit{Given $\bmrho^\ell\in\Vxhc$ and $g^{\ell}\in V_h$, find $\bmrho^{\ell+1}\in\Vxhc$ and $g^{\ell+1}\in V_h$ such that for any $\bmq_h\in \Vxhc$ and $z_h\in \Vhat$, there holds}
\begin{subequations}\label{eqn:BE_MM_SI}
\begin{align}
    \begin{split}
    (\bmrho^{\ell+1},\bmq_h)_{\W_x} &+ \dt\mE(\bmrho^{\ell+1},\bmq_h) = (\bmrho^{\{k\}},\bmq_h)_{\W_x}  -\dt\mA(g^{\ell},\bme\cdot\bmq_h) \\
    &\quad-\dt \mB(f_-^\ell,\bme\cdot\bmq_h),
    \label{eqn:BE_MM_SI:rho}
    \end{split} \\
    \begin{split}
    (g^{\ell+1},z_h) & + \dt\mL(g^{\ell+1},z_h)  = (g^{\{k\}},z_h) + (M(\bmrho^{\{k\}}),z_h) \\
    &\quad - (M(\bmrho^{\ell+1}),z_h)
    -\dt \mA(M(\bmrho^{\ell+1}),z_h) -\dt \mB(f_-^\ell,z_h).
    \end{split}
    \label{eqn:BE_MM_SI:g}
\end{align}
\end{subequations} 

The MM-L iteration requires the same operations as the HOLO method \eqref{eqn:BE_HOLO}: a nonlinear fluid solve for $\bmrho^{\ell+1}$ and a linear transport sweep for $g^{\ell+1}$.
However, we show in \Cref{subsec:riemann} that this approach has the opposite problem of the source iteration method \eqref{eqn:BE_SI}. 
When $\nu\gg 1$, MM-L performs well, since $g$ is small and \eqref{eqn:BE_MM_SI:rho} is a good approximation to the fluid limit.  However, for moderately sized $\nu$, the number of iterations quickly explodes. 
We attribute to the poor performance of MM-L to the following two facts: (i) $\langle\bme g^{\ell-1}\rangle_v=0$ does not guarantee that $\langle\bme g^{\ell}\rangle_v=0$, and (ii) if $\langle\bme g^{\ell}\rangle_v\neq 0$, then an improper heat flux, i.e., $\mA(g^\ell,\bme\cdot\bmq_h)$, is being used in \eqref{eqn:BE_MM_SI:rho}.
We propose a MM-HOLO method for \eqref{eqn:BGK_MM_BE} by employing a HOLO-like strategy and applying the heat flux source correction in \eqref{eqn:BE_HOLO:LO} using the current approximation of the kinetic distribution.  Using the MM ansatz, $f^\ell=M(\bmrho^\ell) + g^\ell$, the correction reads 
\begin{equation}
\mathcal{A}(f^{\ell}-M(\bmrho_{f^\ell}),\bm{e}\cdot\bm{q}_h) = \mE(\bmrho^\ell,\bmq_h) + \mathcal{A}(g^{\ell},\bm{e}\cdot\bm{q}_h) -\mE(\bmrho^\ell+\bmrho_{g^\ell},\bmq_h).
\end{equation}
The MM-HOLO iteration is:  \textit{Given $\bmrho^\ell\in\Vxhc$ and $g^{\ell}\in V_h$, find  $\bmrho^{\ell+1}\in\Vxhc$ and $g^{\ell+1}\in V_h$ such that for any $\bmq_h\in \Vxhc$ and $z_h\in \Vhat$, there holds}
\begin{subequations}\label{eqn:BE_MM}
\begin{align}
\begin{split}
    (\bmrho^{\ell+1},\bmq_h)_{\W_x} 
    &+ \dt\mE(\bmrho^{\ell+1},\bmq_h) 
    = (\bmrho^{\{k\}},\bmq_h)_{\W_x}  
    -\dt\big[\mE(\bmrho^\ell,\bmq_h) \\
    &\quad +\mA(g^{\ell},\bme\cdot\bmq_h) - \mE(\bmrho^\ell+\bmrho_{g^\ell},\bmq_h)\big] - \dt\mB(f_-^\ell,\bme\cdot\bmq_h),
\end{split}\label{eqn:BE_MM:rho}\\
\begin{split}
    (g^{\ell+1},z_h)&+\dt\mL(g^{\ell+1},z_h)  = (g^{\{k\}},z_h) + (M(\bmrho^{\{k\}}),z_h) \\
    &\quad - (M(\bmrho^{\ell+1}),z_h)
    -\dt \mA(M(\bmrho^{\ell+1}),z_h) -\dt \mB(f_-^\ell,z_h).
\end{split}
\label{eqn:BE_MM:g}
\end{align}
\end{subequations}
In the event of boundary conditions that depend only on moment data of $f$, we use the modification given in \Cref{rmk:boundary_condtions} for both MM methods.

If $\bmrho_{g^\ell}=\langle\bme g^{\ell}\rangle_v=0$, then \eqref{eqn:BE_MM} is equivalent to \eqref{eqn:BE_MM_SI}; however, this condition often does not hold.
We will show analytically and numerically that \eqref{eqn:BE_MM} is superior to \eqref{eqn:BE_MM_SI}, as \eqref{eqn:BE_MM:rho} provides a much better approximation of the heat flux. 

MM-HOLO \eqref{eqn:BE_MM} inherits the same properties as HOLO in \Cref{prop:HOLO_moment_convergence,prop:HOLO_vs_SI_equiv}.
We state these properties below but omit the proofs due to the similarity with \Cref{prop:HOLO_moment_convergence,prop:HOLO_vs_SI_equiv}.

\begin{prop}\label{prop:MM_moment_convergence}
    Suppose $\langle\bme g^{\{k\}}\rangle_v=0$.
    Assume that $\mE^*$ in \eqref{eqn:euler_timestep_map} is injective on $\Vxhc$.
    Let $\{\bmrho^\ell,g^\ell\}_\ell$ be defined from the MM-HOLO method \eqref{eqn:BE_MM}.
    Suppose that $\bmrho^\ell \to \bmrho^*\in \Vxhc$ and $g^\ell\to g^*\in V_h$ as $\ell\to\infty$.
    Then $\bmrho_{g^*}=0$.
\end{prop}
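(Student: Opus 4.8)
My plan is to follow the template of the proof of \Cref{prop:HOLO_moment_convergence}, with the extra Maxwellian-flux terms in the heat-flux correction of \eqref{eqn:BE_MM:rho} eventually folded into $\mE^*$. First I would pass to the limit $\ell\to\infty$ in \eqref{eqn:BE_MM}. Using the continuity of $\bmeta\mapsto M(\bmeta)$ (hence of $\mE(\cdot,\bmq_h)$) on the finite-dimensional space $\Vxhc$, the convergences $\bmrho^\ell\to\bmrho^*$, $g^\ell\to g^*$, and $f_-^\ell\to f_-^*$ turn \eqref{eqn:BE_MM:rho} and \eqref{eqn:BE_MM:g} into the limiting macro and micro identities
\begin{align*}
    (\bmrho^*,\bmq_h)_{\W_x} + \dt\mE(\bmrho^*,\bmq_h)
    &= (\bmrho^{\{k\}},\bmq_h)_{\W_x}
       - \dt\big[\mE(\bmrho^*,\bmq_h) + \mA(g^*,\bme\cdot\bmq_h) - \mE(\bmrho^*+\bmrho_{g^*},\bmq_h)\big]
       - \dt\mB(f_-^*,\bme\cdot\bmq_h), \\
    (g^*,z_h) + \dt\mL(g^*,z_h)
    &= (g^{\{k\}},z_h) + (M(\bmrho^{\{k\}}),z_h) - (M(\bmrho^*),z_h) - \dt\mA(M(\bmrho^*),z_h) - \dt\mB(f_-^*,z_h),
\end{align*}
valid for all $\bmq_h\in\Vxhc$ and $z_h\in\Vhat$.

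Next I would test the micro identity with $z_h=\bme\cdot\bmq_h\in\Vhat$, exactly as in the proof of \Cref{prop:BGK_MM_BE_cons}, using the moment identities $(M(\bmeta),\bme\cdot\bmq_h)=(\bmeta,\bmq_h)_{\W_x}$, $\mA(M(\bmeta),\bme\cdot\bmq_h)=\mE(\bmeta,\bmq_h)$ (the defining relation \eqref{eqn:euler_disc_def}), and $\mL(g^*,\bme\cdot\bmq_h)=\mA(g^*,\bme\cdot\bmq_h)+\nu(\bmrho_{g^*},\bmq_h)_{\W_x}$, together with the hypothesis $\langle\bme g^{\{k\}}\rangle_v=0$, which kills the term $(g^{\{k\}},\bme\cdot\bmq_h)$. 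Subtracting the limiting macro identity from the resulting equation cancels $(\bmrho^*,\bmq_h)_{\W_x}$, $\dt\mE(\bmrho^*,\bmq_h)$, $\dt\mA(g^*,\bme\cdot\bmq_h)$, $(\bmrho^{\{k\}},\bmq_h)_{\W_x}$, and $\dt\mB(f_-^*,\bme\cdot\bmq_h)$, leaving
\begin{equation*}
    (1+\dt\nu)(\bmrho_{g^*},\bmq_h)_{\W_x} = \dt\mE(\bmrho^*,\bmq_h) - \dt\mE(\bmrho^*+\bmrho_{g^*},\bmq_h) \qquad \forall\,\bmq_h\in\Vxhc.
\end{equation*}
Adding $(1+\dt\nu)(\bmrho^*,\bmq_h)_{\W_x}+\dt\mE(\bmrho^*+\bmrho_{g^*},\bmq_h)$ to both sides and recalling \eqref{eqn:euler_timestep_map}, this is precisely $(\mE^*(\bmrho^*+\bmrho_{g^*}),\bmq_h)_{\W_x}=(\mE^*\bmrho^*,\bmq_h)_{\W_x}$ for all $\bmq_h\in\Vxhc$. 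Since $\mE^*(\bmrho^*+\bmrho_{g^*})$ and $\mE^*\bmrho^*$ both lie in $\Vxhc$ and $(\cdot,\cdot)_{\W_x}$ is nondegenerate on $\Vxhc$, this forces $\mE^*(\bmrho^*+\bmrho_{g^*})=\mE^*\bmrho^*$; injectivity of $\mE^*$ then gives $\bmrho^*+\bmrho_{g^*}=\bmrho^*$, i.e.\ $\bmrho_{g^*}=0$.

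The one point that is genuinely different from \Cref{prop:BGK_MM_BE_cons} — and the reason the injectivity of $\mE^*$ must be assumed here — is the nonlinearity of $\mE$: because the HOLO correction in \eqref{eqn:BE_MM:rho} carries the Maxwellian-flux term $\mE(\bmrho^\ell+\bmrho_{g^\ell},\cdot)$ and $M$ is not linear in its moment argument, one cannot split $\mE(\bmrho^*+\bmrho_{g^*},\cdot)=\mE(\bmrho^*,\cdot)+\mE(\bmrho_{g^*},\cdot)$ so as to collapse the identity directly to $(1+\dt\nu)(\bmrho_{g^*},\bmq_h)_{\W_x}=0$. The remedy — recognizing the surviving combination as $\mE^*$ evaluated at $\bmrho^*+\bmrho_{g^*}$ versus $\bmrho^*$ and invoking injectivity — is the same device used in \Cref{prop:HOLO_moment_convergence}, and I expect this to be the only subtle step. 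A minor bookkeeping item is that the boundary operator $\mB$ enters the moment-tested micro equation and the macro equation with the same argument $f_-^*$ and so cancels; under the far-field modification of \Cref{rmk:boundary_condtions} this cancellation persists because $\mB$ is treated consistently in both equations. The continuity of $\mE$ used to pass to the limit and the Maxwellian moment identities are routine.
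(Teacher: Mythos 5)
Your proof is correct and is exactly the argument the paper intends (the paper omits it, noting only that it mirrors Proposition \ref{prop:HOLO_moment_convergence}): pass to the limit, test the micro equation with $z_h=\bme\cdot\bmq_h$ as in Proposition \ref{prop:BGK_MM_BE_cons}, subtract the macro equation, and recast the surviving terms as $\mE^*(\bmrho^*+\bmrho_{g^*})=\mE^*\bmrho^*$ so that injectivity of $\mE^*$ yields $\bmrho_{g^*}=0$. Your observation that the nonlinearity of $\mE$ is precisely why injectivity must be assumed here (unlike in Proposition \ref{prop:BGK_MM_BE_cons}) is accurate.
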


\begin{prop}\label{prop:MM_vs_MM_SI_equiv}
    Assume $\langle\bme g^{\{k\}}\rangle_v=0$.
    Suppose $(\bmrho^\ell,g^\ell)$ in the MM-L method \eqref{eqn:BE_MM_SI} converges to $(\bmrho^\tMMSI,g^\tMMSI)\in \Vxhc\times V_h$ as $\ell\to\infty$.  Further suppose $(\bmrho^\ell,g^\ell)$ in the MM-HOLO method \eqref{eqn:BE_MM} converges to $(\bmrho^\tMM,g^\tMM)\in \Vxhc\times V_h$ as $\ell\to\infty$ and that $\bmrho_{g^\tMM} = 0$.
    Then $(\bmrho^\tMMSI,g^\tMMSI)$ and $(\bmrho^\tMM,g^\tMM)$ both solve \eqref{eqn:BGK_MM}.
\end{prop}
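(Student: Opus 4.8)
The plan is to pass to the limit $\ell\to\infty$ in each of the two iterations and check that the resulting identities are exactly the equations of the discrete MM system \eqref{eqn:BGK_MM_BE}. The background facts I would use are that $\mA$, $\mL$, $\mB$ are bilinear (hence continuous on the finite-dimensional spaces in play), that the Maxwellian map $\bmeta\mapsto M(\bmeta)$ is continuous on the set of admissible moments (positive number density and temperature), so that $\mE(\cdot,\bmq_h)=\mA(M(\cdot),\bme\cdot\bmq_h)$ is continuous in its first argument, and that $f_-$ depends continuously on the current iterate --- which for the self-consistent far-field condition \eqref{eqn:far-field-bc} is once more just continuity of $M$. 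Thus, along the relevant sequences, $M(\bmrho^\ell)\to M(\bmrho^*)$, $\mE(\bmrho^\ell,\bmq_h)\to\mE(\bmrho^*,\bmq_h)$, $\mA(g^\ell,\bme\cdot\bmq_h)\to\mA(g^*,\bme\cdot\bmq_h)$, and $f_-^\ell\to f_-^*$, where $\bmrho^*$, $g^*$, $f_-^*$ denote the corresponding limits.

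First I would treat MM-L. Since both sides of \eqref{eqn:BE_MM_SI:rho} and \eqref{eqn:BE_MM_SI:g} are continuous in $(\bmrho^\ell,g^\ell,\bmrho^{\ell+1})$, passing to the limit with $(\bmrho^\ell,g^\ell)\to(\bmrho^\tMMSI,g^\tMMSI)$ produces exactly \eqref{eqn:BGK_MM_BE:rho} and \eqref{eqn:BGK_MM_BE:g} with $(\bmrho^{\{k+1\}},g^{\{k+1\}})$ replaced by $(\bmrho^\tMMSI,g^\tMMSI)$. Hence $(\bmrho^\tMMSI,g^\tMMSI)$ solves \eqref{eqn:BGK_MM_BE}; note that no hypothesis beyond the assumed convergence is needed here, because the MM-L $\bmrho$-update has no heat-flux correction term (it uses $\mA(g^\ell,\bme\cdot\bmq_h)$ directly).

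Next I would treat MM-HOLO. Equation \eqref{eqn:BE_MM:g} has the same form as \eqref{eqn:BE_MM_SI:g}, so its limit is again \eqref{eqn:BGK_MM_BE:g}. The only equation that differs is \eqref{eqn:BE_MM:rho}, whose right-hand side in the limit contains the bracket $\mE(\bmrho^\tMM,\bmq_h)+\mA(g^\tMM,\bme\cdot\bmq_h)-\mE(\bmrho^\tMM+\bmrho_{g^\tMM},\bmq_h)$. Here the hypothesis $\bmrho_{g^\tMM}=0$ enters: since it says $\bmrho_{g^\tMM}$ is the zero element of $\Vxhc$, one has $\bmrho^\tMM+\bmrho_{g^\tMM}=\bmrho^\tMM$ and therefore $\mE(\bmrho^\tMM+\bmrho_{g^\tMM},\bmq_h)=\mE(\bmrho^\tMM,\bmq_h)$, so the bracket collapses to $\mA(g^\tMM,\bme\cdot\bmq_h)$; the limiting equation is then precisely \eqref{eqn:BGK_MM_BE:rho}, and $(\bmrho^\tMM,g^\tMM)$ solves \eqref{eqn:BGK_MM_BE}. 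If far-field boundary data are in use, the modification of \Cref{rmk:boundary_condtions} is applied identically to \eqref{eqn:BGK_MM_BE} and \eqref{eqn:BE_MM} --- with $\mB(f_-^{\ell+1},\cdot)$, built from $\bmrho^{\ell+1}$, treated on the left-hand side --- and the same limit argument goes through verbatim.

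The one genuinely delicate point is the collapse of the flux-correction bracket in the MM-HOLO case: because $\mE(\cdot,\bmq_h)$ is nonlinear in its first argument, one cannot split $\mE(\bmrho^\tMM+\bmrho_{g^\tMM},\bmq_h)$, so it is essential that $\bmrho_{g^\tMM}$ vanish exactly, not merely in a limiting difference --- which is exactly why this extra hypothesis is imposed (and, by \Cref{prop:MM_moment_convergence}, it is automatic once $\mE^*$ in \eqref{eqn:euler_timestep_map} is injective and $\langle\bme g^{\{k\}}\rangle_v=0$). Everything else reduces to routine continuity and limit-passing, so I do not anticipate further obstacles.
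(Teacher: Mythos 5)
Your proof is correct and takes essentially the approach the paper intends (the paper omits the proof, citing its similarity to \Cref{prop:HOLO_moment_convergence,prop:HOLO_vs_SI_equiv}): pass to the limit in each iteration using continuity on the finite-dimensional spaces, and use the hypothesis $\bmrho_{g^\tMM}=0$ to collapse the MM-HOLO flux-correction bracket to $\mA(g^\tMM,\bme\cdot\bmq_h)$, while MM-L needs no such step since its fixed-point equations are literally \eqref{eqn:BGK_MM_BE}. You also correctly interpret the conclusion as solving the discrete system \eqref{eqn:BGK_MM_BE} (the reference to \eqref{eqn:BGK_MM} in the statement is evidently meant in that sense, since the limits are fully discrete objects).
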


%%%%%%%%%%%%%%%%%%%%%%%%%%%%%%%%%%%%%%%%%%%%%%%%%%%
%%%%%%% Fixed-Point Ananlysis
%%%%%%%%%%%%%%%%%%%%%%%%%%%%%%%%%%%%%%%%%%%%%%%%%%%

\section{Convergence analysis in a linear BGK setting}\label{sec:analysis}

In this section, we provide some formal analysis that shows the advantages and limitations of the HOLO and MM methods when compared to source iteration \eqref{eqn:BE_SI}.
We pose several simplifying assumptions which highlight the dependence of the convergence rate in $\ell$ with respect to problem and discretization parameters; namely, $\nu$, $\dt$, and $h_x$.
We focus on a simplified linear BGK model given by
\begin{equation}\label{eqn:lin_bgk:system}
    \partial_t f + v\partial_x f = \nu({n}_f\mpM-f),
\end{equation}
where ${n}_f=\vbr{f}$.
The static Maxwellian $\mpM=\mpM(v)$ is defined as
\begin{equation}
    \mpM(v) = \frac{1}{\sqrt{2\pi{\theta_0}}}\exp\bigg(\frac{-(v-{u_0})^2}{2{\theta_0}}\bigg),
\end{equation}
where ${u_0} = \vbr{v\mpM}\in\bbR$ and ${\theta_0} = \vbr{v^2\mpM}-u_0^2>0$ are constant. 
Note that $\vbr{\mpM}=1$.
A more complicated but more physically relevant model that preserves all three conservation invariants is analyzed in \Cref{sec:appendix_bgk}.

\begin{remark}\label{rmk:linear_bgk_disc}
    We equip the linear BGK model \eqref{eqn:lin_bgk:system} with periodic boundary conditions in $x$ and give no discretization in velocity space.
    We discretize $\partial_x$ on $V_{x,h}$ via central differences; this grants a strong form operator $A$ that is skew-symmetric with respect to $L^2(\W_x)$ and contains only imaginary eigenvalues $i\lambda$ with $\lambda\in\bbR$ and $|\lambda|\leq 1/h_x$.  
\end{remark}

Applying the backward Euler scheme with this discretization leads to the following problem: \textit{Given $f^{\{k\}}$, find $f^{\{k+1\}}$ such that}
\begin{equation}\label{eqn:lin_bgk:BE}
    f^{\{k+1\}} + \dt vA f^{\{k+1\}} + \nu\dt f^{\{k+1\}} = f^{\{k\}} + \nu\dt \mpM{n}_{f^{\{k+1\}}}.
\end{equation}

Let $P_\mpM:L^2(\W_v)\to \textrm{span}\{\mpM\}$ be given by $P_\mpM w = \vbr{w}\mpM=\mpM{n}_w$.
$P_\mpM$ is an orthogonal projection with respect to the $\mpM^{-1}$ inner product $\Mip{w}{z} := \vbr{wz\mpM^{-1}}$ and can be extended to an orthogonal projection in $L^2(\W)$ with respect to the inner product $(w,z)_{\mpM}:=(w,z\mpM^{-1})$.
Define $P_\mpM^\perp = I-P_\mpM$.
In this decomposition,
\begin{align}
f = P_\mpM f + P_\mpM^\perp f &= \mpM{n}_f + (f - \mpM{n}_f),\label{eqn:lin_bgk:decomp} \\
\|f\|_{\mpM}^2 = \|P_\mpM f\|_\mpM^2 + \|P_\mpM^\perp f\|_\mpM^2 &= \|\mpM{n}_f\|_{\mpM}^2 + \|P_\mpM^\perp f\|_\mpM^2 = \|{n}_f\|_{\W_x}^2 + \|P_\mathpzc{M}^\perp f\|_\mathpzc{M}^2.   \label{eqn:lin_bgk:orth_decomp}
\end{align}

\subsection{Source iteration}\label{subsec:lin_bgk:SI_theory}

The source iteration method to solve \eqref{eqn:lin_bgk:BE} is as follows: \textit{Given $f^\ell$, find $f^{\ell+1}$ such that}
\begin{equation}\label{eqn:lin_bgk:SI}
    f^{\ell+1} + \dt vA f^{\ell+1} + \nu\dt f^{\ell+1} = f^{\{k\}} + \nu\dt \mpM{n}_{f^\ell}.
\end{equation}

We now list the convergence result for SI.
\begin{prop}\label{prop:lin_bgk:SI_error}
    Define $e^{\ell} = f^{\ell} - f^{\{k+1\}}$ where $f^\ell$ is given in \eqref{eqn:lin_bgk:SI}.
    Then
    \begin{equation}\label{eqn:lin_bgk:SI_error:0}
        (1+\nu\dt)\|P_\mpM e^{\ell+1}\|_\mpM^2 + (1+\nu\dt)\|P_\mpM^\perp e^{\ell+1}\|_\mpM^2 \leq \nu\dt\|P_\mpM e^{\ell}\|_\mpM \|P_\mpM e^{\ell+1}\|_\mpM.
    \end{equation}
    Moreover,
    \begin{align}\label{eqn:lin_bgk:SI_error:1}
        \|P_\mpM e^{\ell+1}\|_\mpM \leq \frac{\nu\dt}{1+\nu\dt} \|P_\mpM e^{\ell}\|_\mpM.
    \end{align}
\end{prop}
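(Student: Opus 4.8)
\subsection*{Proof proposal for \texorpdfstring{\Cref{prop:lin_bgk:SI_error}}{Proposition}}

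The plan is to derive a closed recursion for $e^\ell$ by subtracting the backward Euler equation \eqref{eqn:lin_bgk:BE} from the source iteration \eqref{eqn:lin_bgk:SI}, and then to test the resulting identity against $e^{\ell+1}$ in the velocity-weighted inner product $(\cdot,\cdot)_\mpM$. First I would subtract the two equations: since $n_{f^\ell} - n_{f^{\{k+1\}}} = \vbr{e^\ell}$ and $P_\mpM e^\ell = \vbr{e^\ell}\mpM$, the lagged Maxwellian source collapses exactly to $\nu\dt\, P_\mpM e^\ell$, and the transport and reaction terms are linear, so the error obeys $(1+\nu\dt)\,e^{\ell+1} + \dt\, vA e^{\ell+1} = \nu\dt\, P_\mpM e^\ell$.

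Next I would pair this identity with $e^{\ell+1}$ in the inner product $(\cdot,\cdot)_\mpM = (\cdot,\cdot\,\mpM^{-1})$. The crucial point is that the transport term $\dt(vA e^{\ell+1}, e^{\ell+1})_\mpM$ vanishes: both the factor $v$ and the weight $\mpM^{-1}$ depend only on velocity, so for each fixed $v$ the $x$-integral reduces to $v\,\mpM(v)^{-1}(A e^{\ell+1}(\cdot,v), e^{\ell+1}(\cdot,v))_{\W_x}$, which is zero because $A$ is skew-symmetric on $L^2(\W_x)$ (see \Cref{rmk:linear_bgk_disc}). This leaves $(1+\nu\dt)\|e^{\ell+1}\|_\mpM^2 = \nu\dt\,(P_\mpM e^\ell, e^{\ell+1})_\mpM$. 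Using that $P_\mpM$ is an orthogonal projection for $(\cdot,\cdot)_\mpM$, I would rewrite the right-hand side as $\nu\dt\,(P_\mpM e^\ell, P_\mpM e^{\ell+1})_\mpM$ and split the left-hand side by the Pythagorean identity \eqref{eqn:lin_bgk:orth_decomp} applied to $e^{\ell+1}$; a Cauchy--Schwarz bound on the right-hand side then yields \eqref{eqn:lin_bgk:SI_error:0}.

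Finally, to obtain the contraction estimate \eqref{eqn:lin_bgk:SI_error:1}, I would discard the nonnegative term $(1+\nu\dt)\|P_\mpM^\perp e^{\ell+1}\|_\mpM^2$ from \eqref{eqn:lin_bgk:SI_error:0}, leaving $(1+\nu\dt)\|P_\mpM e^{\ell+1}\|_\mpM^2 \leq \nu\dt\,\|P_\mpM e^\ell\|_\mpM\,\|P_\mpM e^{\ell+1}\|_\mpM$; dividing by $\|P_\mpM e^{\ell+1}\|_\mpM$ (the case where it is zero being trivial) gives the factor $\tfrac{\nu\dt}{1+\nu\dt}$.

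I do not anticipate a serious obstacle. The only step requiring care is the vanishing of the transport term, which hinges on testing in the $\mpM^{-1}$-weighted inner product rather than the plain $L^2(\W)$ product; this is precisely the structural feature that makes the argument close, so it is the point I would state most explicitly.
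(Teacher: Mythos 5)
Your proposal is correct and follows essentially the same route as the paper's proof: subtract \eqref{eqn:lin_bgk:BE} from \eqref{eqn:lin_bgk:SI} to get $(1+\nu\dt)e^{\ell+1}+\dt vAe^{\ell+1}=\nu\dt P_\mpM e^\ell$, test with $e^{\ell+1}\mpM^{-1}$, kill the transport term by skew-symmetry of $A$, use orthogonality of $P_\mpM$ and Cauchy--Schwarz to get \eqref{eqn:lin_bgk:SI_error:0}, then drop the perpendicular term and divide for \eqref{eqn:lin_bgk:SI_error:1}. Your explicit justification that the $\mpM^{-1}$-weighted pairing reduces, for each fixed $v$, to a skew-symmetric pairing in $x$ is exactly the structural point the paper uses implicitly.
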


\begin{proof}
    Subtracting \eqref{eqn:lin_bgk:BE} from \eqref{eqn:lin_bgk:SI} yields
    \begin{align}\label{eqn:lin_bgk:SI_error:2}
        (1+\nu\dt)e^{\ell+1} + \dt vA e^{\ell+1} = \nu\dt (\mpM{n}_{f^\ell}-\mpM{n}_{f^{\{k+1\}}}) = \nu\dt P_\mpM e^{\ell}.
    \end{align}
    Testing \eqref{eqn:lin_bgk:SI_error:2} by $e^{\ell+1}\mpM^{-1}$, and then applying \eqref{eqn:lin_bgk:orth_decomp}, the skew-symmetry of $A$, the orthogonality of $P_\mpM$, and H\"older's inequality yields
    \begin{align}\label{eqn:lin_bgk:SI_error:3}
    \begin{split}
        (1+\nu\dt)\big( \|P_\mpM e^{\ell+1}\|_\mpM^2 + \|P_\mpM^\perp e^{\ell+1}\|_\mpM^2 \big) &= \nu\dt (P_\mpM e^\ell,e^{\ell+1})_\mpM \\
        &= \nu\dt(P_\mpM e^\ell,P_\mpM e^{\ell+1})_\mpM
    \end{split} \\
        &\leq \nu\dt\|P_\mpM e^{\ell}\|_\mpM \|P_\mpM e^{\ell+1}\|_\mpM.
    \label{eqn:lin_bgk:SI_error:4}
    \end{align}
    \Cref{eqn:lin_bgk:SI_error:4} is exactly \eqref{eqn:lin_bgk:SI_error:0} and leads to \eqref{eqn:lin_bgk:SI_error:1}.
    The proof is complete.
\end{proof}

From \eqref{eqn:lin_bgk:SI_error:0}, one can show $\|P_\mpM^\perp e^{\ell+1}\|$ is also bounded by $\|P_\mpM e^{\ell}\|$; therefore, SI is unconditionally stable in $\dt$.
However, as $\nu\dt\to\infty$, the contraction constant on the error approaches one and the convergence rate degrades.

\subsection{HOLO method}

The HOLO method for \eqref{eqn:lin_bgk:BE} is: \textit{Given $f^\ell$, find $f^{\ell+1}$ such that}
\begin{subequations}\label{eqn:lin_bgk:HOLO}
\begin{align}
    f^{\ell+1} + \dt vA f^{\ell+1} + \nu\dt f^{\ell+1} &= f^{\{k\}} + \nu\dt \mpM{n}^{\ell+1}, 
    \label{eqn:lin_bgk:HOLO:hi}\\
    {n}^{\ell+1} + {u_0}\dt A {n}^{\ell+1} &= {n}_{f^{\{k\}}} - \dt A\vbr{ v(f^\ell-\mpM{n}_{f^\ell}) }. \label{eqn:lin_bgk:HOLO:lo}
\end{align}
\end{subequations}

To motivate \eqref{eqn:lin_bgk:HOLO}, we integrate \eqref{eqn:lin_bgk:BE} in $v$ and
then add $\dt A\vbr{v\mpM{n}_{f^{\{k+1\}}}}={u_0}\dt  A{n}_{f^{\{k+1\}}}$ to both sides.
These actions yield
\begin{equation} \label{eqn:lin_bgk:HOLO_deriv:1}
    {n}_{f^{\{k+1\}}} +{u_0}\dt A{n}_{f^{\{k+1\}}} = {n}_{f^{\{k\}}} - \dt A\vbr{v(f^{\{k+1\}}-\mpM{n}_{f^{\{k+1\}}})}.
\end{equation}
Lagging the right-hand side of \eqref{eqn:lin_bgk:HOLO_deriv:1} leads to \eqref{eqn:lin_bgk:HOLO:lo}.
The rightmost term of \eqref{eqn:lin_bgk:HOLO:lo} can be written as $-\dt A\vbr{v P_\mpM^\perp f^\ell}$.  
This fact is key to the convergence behavior of HOLO, and its role is show below.
\begin{prop}\label{prop:lin_bgk:HOLO_error}
    Define $e^{\ell} = f^{\ell} - f^{\{k+1\}}$ where $f^\ell$ is given in \eqref{eqn:lin_bgk:HOLO:hi}.
    Then for any $1\leq\delta\leq 2$,
    \begin{equation}\label{eqn:lin_bgk:HOLO_error:0}
        (1+\tfrac{2-\delta}{2}\nu\dt)\|P_\mpM e^{\ell+1}\|_\mpM^2 + (1+\nu\dt)\|P_\mpM^\perp e^{\ell+1}\|_\mpM^2 \leq \tfrac{\nu\dt}{2\delta} C_{\tHL} \|P_\mpM^\perp e^{\ell}\|_\mpM^2,
    \end{equation}
    where $C_{\mtHL} := \frac{(\theta_0)\dt^2/h_x^2}{1+u_0^2\dt^2/h_x^2}$.
    Moreover,
    \begin{equation}\label{eqn:lin_bgk:HOLO_error:1}\textstyle
        \|P_\mpM^\perp e^{\ell+1}\|_\mpM^2 \leq \frac{1}{4}C_{\tHL}\frac{\dt\nu}{1+\dt\nu}\|P_\mpM^\perp e^{\ell}\|_\mpM^2,\,\,\,\,\,
        \|P_\mpM e^{\ell+1}\|_\mpM^2 \leq C_{\tHL}\frac{\dt\nu}{2+\dt\nu}\|P_\mpM^\perp e^{\ell}\|_\mpM^2.
    \end{equation}
\end{prop}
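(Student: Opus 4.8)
The plan is to mimic the structure of the proof of \Cref{prop:lin_bgk:SI_error}, but now tracking \emph{two} errors: the kinetic error $e^\ell := f^\ell - f^{\{k+1\}}$ and the low-order density error $\varepsilon^{\ell+1} := n^{\ell+1} - n_{f^{\{k+1\}}}$ on $\W_x$. First I would subtract the exact backward-Euler relations from the HOLO relations. Subtracting \eqref{eqn:lin_bgk:BE} from \eqref{eqn:lin_bgk:HOLO:hi} gives the kinetic error equation $(1+\nu\dt)e^{\ell+1} + \dt\,vA e^{\ell+1} = \nu\dt\,\mpM\,\varepsilon^{\ell+1}$, and subtracting \eqref{eqn:lin_bgk:HOLO_deriv:1} from \eqref{eqn:lin_bgk:HOLO:lo} gives the low-order error equation $\varepsilon^{\ell+1} + u_0\dt A\varepsilon^{\ell+1} = -\dt A\vbr{v P_\mpM^\perp e^\ell}$, where I have used linearity of $P_\mpM^\perp$ together with the identity $f-\mpM n_f = P_\mpM^\perp f$.

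The second step is to bound $\varepsilon^{\ell+1}$ in terms of $P_\mpM^\perp e^\ell$. Since $A$ is skew-symmetric on $\Vxh$ with eigenvalues $i\lambda$, $|\lambda|\le 1/h_x$ (see \Cref{rmk:linear_bgk_disc}), the operator $(I+u_0\dt A)^{-1}\dt A$ is well defined and, upon diagonalizing $A$, has operator norm $\sup_{|\lambda|\le 1/h_x}\dt|\lambda|/\sqrt{1+u_0^2\dt^2\lambda^2}$, whose square is increasing in $\lambda^2$ and hence attains its maximum $C_{\tHL}/\theta_0 = (\dt^2/h_x^2)/(1+u_0^2\dt^2/h_x^2)$ at $|\lambda|=1/h_x$. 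For the forcing term I would use that $\vbr{P_\mpM^\perp e^\ell}=0$ to replace $v$ by $v-u_0$, and then Cauchy--Schwarz in the $\mpM^{-1}$-weighted inner product (pointwise in $x$) gives $|\vbr{v P_\mpM^\perp e^\ell}|^2 = |\vbr{(v-u_0) P_\mpM^\perp e^\ell}|^2 \le \vbr{(v-u_0)^2\mpM}\,\vbr{\mpM^{-1}(P_\mpM^\perp e^\ell)^2} = \theta_0\,\vbr{\mpM^{-1}(P_\mpM^\perp e^\ell)^2}$. Integrating in $x$ and combining the two bounds yields $\|\varepsilon^{\ell+1}\|_{\W_x}^2 = \|\mpM\varepsilon^{\ell+1}\|_\mpM^2 \le C_{\tHL}\,\|P_\mpM^\perp e^\ell\|_\mpM^2$.

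The third step is the energy estimate on the kinetic error equation: test it against $e^{\ell+1}\mpM^{-1}$, just as in \eqref{eqn:lin_bgk:SI_error:3}. The advection term vanishes by skew-symmetry of $A$ for each fixed $v$, the left side splits via \eqref{eqn:lin_bgk:orth_decomp} into $\|P_\mpM e^{\ell+1}\|_\mpM^2 + \|P_\mpM^\perp e^{\ell+1}\|_\mpM^2$, and since $\mpM\varepsilon^{\ell+1}$ lies in the range of $P_\mpM$ the right side reduces to $\nu\dt\,(\mpM\varepsilon^{\ell+1},P_\mpM e^{\ell+1})_\mpM \le \nu\dt\,\|\varepsilon^{\ell+1}\|_{\W_x}\,\|P_\mpM e^{\ell+1}\|_\mpM$. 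Applying Young's inequality $ab\le\frac{1}{2\delta}a^2+\frac{\delta}{2}b^2$, absorbing the resulting $\frac{\delta}{2}\nu\dt\|P_\mpM e^{\ell+1}\|_\mpM^2$ into the left side (which keeps a nonnegative $P_\mpM$-coefficient precisely when $\delta\le2$), and inserting the bound $\|\varepsilon^{\ell+1}\|_{\W_x}^2\le C_{\tHL}\|P_\mpM^\perp e^\ell\|_\mpM^2$ produces exactly \eqref{eqn:lin_bgk:HOLO_error:0}. Finally, dropping one nonnegative term on the left of \eqref{eqn:lin_bgk:HOLO_error:0} and choosing the free parameter suitably ($\delta=2$ for the $P_\mpM^\perp$ estimate, $\delta=1$ for the $P_\mpM$ estimate) gives \eqref{eqn:lin_bgk:HOLO_error:1}.

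The crux — and the reason HOLO outperforms source iteration when $\nu\dt$ is large — is the $\varepsilon^{\ell+1}$ bound in the second step: the low-order correction feeds back into the kinetic error only through its collisional component $P_\mpM^\perp e^\ell$, and exploiting the vanishing zeroth moment of that component to subtract $u_0$ before Cauchy--Schwarz is what forces the contraction constant $C_{\tHL}$ to scale with $\theta_0$ and $\dt^2/h_x^2$ rather than with $\nu$, so that $\|P_\mpM^\perp e^{\ell+1}\|_\mpM^2$ is controlled by $\tfrac14 C_{\tHL}\tfrac{\nu\dt}{1+\nu\dt}\|P_\mpM^\perp e^\ell\|_\mpM^2$ with a rate bounded \emph{uniformly} in $\nu\dt$. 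The other ingredients (the spectral bound for the rational function of $A$, the skew-symmetry test, and Young's inequality) are routine.
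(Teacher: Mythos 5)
Your proposal is correct and follows essentially the same route as the paper's proof: the same subtraction of the exact backward-Euler and moment equations, the same spectral bound on $(I+u_0\dt A)^{-1}\dt A$ using normality of $A$, the same shift $v\mapsto v-u_0$ via $\vbr{P_\mpM^\perp e^\ell}=0$ followed by weighted Cauchy--Schwarz to produce $C_\tHL$, and the same $\mpM^{-1}$-weighted energy estimate with Young's parameter $\delta\in[1,2]$ chosen as $2$ and $1$ to extract \eqref{eqn:lin_bgk:HOLO_error:1}. No gaps to report.
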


\begin{proof}
    Let ${n}^{\{k+1\}} := {n}_{f^{\{k+1\}}}$.
    Similar to the proof of \Cref{prop:lin_bgk:SI_error}, we subtract \eqref{eqn:lin_bgk:BE} from \eqref{eqn:lin_bgk:HOLO:hi} and test by $e^{\ell+1}\mpM^{-1}$ which yields (c.f.~\eqref{eqn:lin_bgk:SI_error:3})
    \begin{equation}\label{eqn:lin_bgk:HOLO_error:3a}
        (1+\nu\dt)\big( \|P_\mpM e^{\ell+1}\|_\mpM^2 + \|P_\mpM^\perp e^{\ell+1}\|_\mpM^2 \big) = \nu\dt(\mpM({n}^{\ell+1}-{n}^{\{k+1\}}),P_\mpM e^{\ell+1})_\mpM.
    \end{equation}
    An application of H\"older's and Young's inequality with weight $1\leq\delta\leq 2$ yields
    \begin{align}\label{eqn:lin_bgk:HOLO_error:3b}
    \begin{split}
        (\mpM({n}^{\ell+1}-{n}^{\{k+1\}}),P_\mpM e^{\ell+1})_\mpM
        &\leq\|\mpM{n}^{\ell+1}-\mpM{n}^{\{k+1\}}\|_\mpM\|P_\mpM e^{\ell+1}\|_\mpM \\
        &\leq \tfrac{1}{2\delta}\|\mpM({n}^{\ell+1}-{n}^{\{k+1\}})\|_\mpM^2 + \tfrac{\delta}{2}\|P_\mpM e^{\ell+1}\|_\mpM^2 \\
        &= \tfrac{1}{2\delta}\|{n}^{\ell+1}-{n}^{\{k+1\}}\|_{\W_x}^2 + \tfrac{\delta}{2}\|P_\mpM e^{\ell+1}\|_\mpM^2. \\
    \end{split}
    \end{align}
    Applying \eqref{eqn:lin_bgk:HOLO_error:3b} to \eqref{eqn:lin_bgk:HOLO_error:3a} and rearranging yields
    \begin{equation}\label{eqn:lin_bgk:HOLO_error:3}
        (1+\tfrac{2-\delta}{2}\nu\dt)\|P_\mpM e^{\ell+1}\|_\mpM^2 + (1+\nu\dt)\|P_\mpM^\perp e^{\ell+1}\|_\mpM^2 \leq \tfrac{\nu\dt}{2\delta}\|{n}^{\ell+1}-{n}^{\{k+1\}}\|_{\W_x}^2.
    \end{equation}
    We will now bound the right-hand side of \eqref{eqn:lin_bgk:HOLO_error:3}. 
    Subtracting \eqref{eqn:lin_bgk:HOLO_deriv:1} from \eqref{eqn:lin_bgk:HOLO:lo} and noting $\vbr{P_\mpM^\perp e^{\ell}}=0$ we obtain the error equation for the low-order solve, i.e.,
    \begin{align}\label{eqn:lin_bgk:HOLO_error:4}
        (I+{u_0}\dt A)({n}^{\ell+1}-{n}^{\{k+1\}}) = -\dt A\vbr{vP_\mpM^\perp e^{\ell}} =-\dt A\vbr{(v-u_0)P_\mpM^\perp e^{\ell}}.
    \end{align}
    Let $A^\dagger = -\dt(I+{u_0}\dt A)^{-1}A$.
    Since $A$ is normal, $A^\dagger$ is normal and thus
    \begin{equation} \label{eqn:hyp_heat:HOLO_error:6}
        \|A^\dagger\|^2 = \max_{\lambda\in\sigma(A^\dagger)}|\lambda|^2 
        = \max_{i\lambda\in\sigma(A)} \big|\tfrac{i\dt\lambda}{1+i{u_0}\dt \lambda}\big|^2 \leq \max_{\lambda^2 \leq h_x^{-2}} \tfrac{\dt^2\lambda^2}{1+u_0^2\dt^2\lambda^2} \leq \tfrac{\dt^2/h_x^2}{1+u_0^2\dt^2/h_x^2},
    \end{equation}
    where $\sigma(B)$ denotes the spectrum of a matrix $B$.
    Therefore, 
    \begin{equation}\label{eqn:lin_bgk:HOLO_error:5}
        \|{n}^{\ell+1}-{n}^{\{k+1\}}\|_{\W_x}^2 \leq \frac{\dt^2/h_x^2}{1+u_0^2\dt^2/h_x^2}\|\vbr{(v-u_0)P_\mpM^\perp e^{\ell}}\|_{\W_x}^2.
    \end{equation}
    Since $
        \vbr{(v-u_0)P_\mpM^\perp e^{\ell}}^2 
        \leq \vbr{(v-u_0)^2\mpM}\Mip{P_\mpM^\perp e^\ell}{P_\mpM^\perp e^\ell} = \theta_0\Mip{P_\mpM^\perp e^\ell}{P_\mpM^\perp e^\ell},
    $
    \eqref{eqn:lin_bgk:HOLO_error:5} becomes
    \begin{equation}\label{eqn:lin_bgk:HOLO_error:7}
        \|{n}^{\ell+1}-{n}^{\{k+1\}}\|_{\W_x}^2 \leq \frac{\theta_0\dt^2/h_x^2}{1+u_0^2\dt^2/h_x^2}\|P_\mpM^\perp e^\ell\|_\mpM^2= C_\tHL\|P_\mpM^\perp e^\ell\|_\mpM^2.
    \end{equation}
    Combining \eqref{eqn:lin_bgk:HOLO_error:3} and \eqref{eqn:lin_bgk:HOLO_error:7} yield \eqref{eqn:lin_bgk:HOLO_error:0}.
    The bounds in \eqref{eqn:lin_bgk:HOLO_error:1} follow from \eqref{eqn:lin_bgk:HOLO_error:0} and setting $\delta = 2$ and $\delta=1$. 
    The proof is complete.
\end{proof}

\begin{remark}
We consider the case when $\nu\dt\gg 1$.
Then \Cref{prop:lin_bgk:SI_error,prop:lin_bgk:HOLO_error} show the contraction constants of SI and HOLO are close to 1 and $\tfrac{1}{2}\sqrt{C_\mtHL}$ respectively.
If $C_\mtHL$ is well controlled, then the contraction constant of HOLO is bounded away from 1, and thus we expect HOLO to perform better than SI.
However, there are choices of ${u_0}$ and ${\theta_0}$ such that $\sqrt{C_\mtHL}$ is directly proportional to $\dt/h_x$.  Hence HOLO, unlike SI, is only conditionally stable.
\end{remark}

\subsection{Micro-macro methods}

We now analyze the MM methods.
Applying the MM-L approach to \eqref{eqn:lin_bgk:BE} yields the following method: \textit{Given $\{{n}^\ell,$ $\!g^\ell\}$, find $\{{n}^{\ell+1},$ $\!g^{\ell+1}\}$ such that}
\begin{subequations}\label{eqn:lin_bgk:MM_SI}
\begin{align} \label{eqn:lin_bgk:MM_SI:rho}
    {n}^{\ell+1} + {u_0}\dt A {n}^{\ell+1} &= {n}_{f^{\{k\}}} - \dt A\vbr{ vg^\ell }, \\
    g^{\ell+1} + \dt vAg^{\ell+1} + \nu\dt g^{\ell+1} &= \mpM{n}^{\{k\}}+g^{\{k\}} - \mpM{n}^{\ell+1} - \dt vA(\mpM{n}^{\ell+1}). \label{eqn:lin_bgk:MM_SI:g}
\end{align}
\end{subequations}
If ${n}^\ell$ and $g^\ell$ from \eqref{eqn:lin_bgk:MM_SI} converge to ${n}^*$ and $g^*$ respectively, then ${n}_{g^*}=0$; moreover, $f^*=\mpM{n}^*+g^*$ solves \eqref{eqn:lin_bgk:BE}.

We now explain the poor performance of MM-L, which is  demonstrated in \Cref{subsec:riemann} and primarily caused by the low-order solve \eqref{eqn:lin_bgk:MM_SI:rho}.
The high-order solve presents no issue since, by letting $f^\ell = \mpM{n}^\ell + g^\ell$, \eqref{eqn:lin_bgk:MM_SI:g} reduces to the high-order solve of HOLO; namely, \eqref{eqn:lin_bgk:HOLO:hi}.
In the HOLO low-order solve, \eqref{eqn:lin_bgk:HOLO:lo}, the density ${n}^{\ell+1}$ is a function of $P_\mpM^\perp f^\ell$; therefore, the error in ${n}^{\ell+1}$ is bounded by $P_M^\perp e^\ell$, see \eqref{eqn:lin_bgk:HOLO_error:7}.
However, for MM-L, one has $g^\ell = P_\mpM^\perp f^\ell + \mpM{n}_{g^\ell}$, and if $n_{g^\ell}\neq 0$, which is often the case, then the latter term does not vanish.
Following a similar strategy as the proof of \Cref{prop:lin_bgk:HOLO_error}, an analog of \eqref{eqn:lin_bgk:HOLO_error:7} for MM-L can be derived; namely,
\begin{equation}\label{eqn:lin_bgk:MM_SI:err_est}
    \|{n}^{\ell+1}-{n}^{\{k+1\}}\|_{\W_x}^2 \leq C_\tHL(\|P_\mpM^\perp e^\ell\|_\mpM^2 + \|n_{g^\ell}\|_{\W_x}^2).
\end{equation}
When $\nu\gg 1$, we conjecture that ${n}_{g^\ell}$ is sufficiently small such that the convergence rate of the MM-L method is not harmed. 
However, as $\nu$ becomes smaller, $\|n_{g^\ell}\|_{\W_x}$ becomes the dominant term in \eqref{eqn:lin_bgk:MM_SI:err_est} and convergence will most likely stagnate.

The MM-HOLO method is similar to the MM-L method, but the low-order solve \eqref{eqn:lin_bgk:MM_SI:rho} is instead given by 
\begin{equation}\label{eqn:lin_bgk:MM_HOLO}
\begin{split}
    {n}^{\ell+1} + {u_0}\dt A {n}^{\ell+1} &= {n}_{f^{\{k\}}} - \dt A\vbr{ v(g^\ell-\mpM{n}_{g^\ell}) }.
\end{split}
\end{equation}
Since $g^\ell-\mpM{n}_{g^\ell} = P_\mpM^\perp f^\ell$, the error of ${n}^{\ell+1}$ from \eqref{eqn:lin_bgk:MM_HOLO} can be closed in terms of $P_\mpM^\perp e^\ell$.
In fact, the MM-HOLO method is equivalent to the HOLO method \eqref{eqn:lin_bgk:HOLO} due to the linearity of \eqref{eqn:lin_bgk:system} and of the lack of a velocity discretization.

%%%%%%%%%%%%%%%%%%%%%%%%%%%%%%%%%%%%%%%%%%%%%%%%%%%
%%%%%%% Numerical Results
%%%%%%%%%%%%%%%%%%%%%%%%%%%%%%%%%%%%%%%%%%%%%%%%%%%

\section{Numerical results}\label{sec:numerical}

In this section we numerically verify the claims and analysis given in the preceding sections. 
We test the above methods on two example problems: the Sod shock tube problem \cite{sod1978survey} and a 1D-1V
variation of the sudden wall heating boundary layer problem in \cite{aoki1991numerical}. 
For all tests in this section, we set $\kappa=2$ for $V_{x,h}$ in \eqref{eqn:discrete_space_x}.

The transport solves --- \eqref{eqn:BE_SI}, \eqref{eqn:BE_HOLO:HI}, \eqref{eqn:BE_MM_SI:g}, and \eqref{eqn:BE_MM:g} --- are all linear problems that are inverted using sweeping methods \cite{adams2002fast}.
The nonlinear fluid equations --- \eqref{eqn:BE_HOLO:LO}, \eqref{eqn:BE_MM_SI:rho}, and \eqref{eqn:BE_MM:rho} --- are  
computed using a Jacobian-free Newton-Krylov (JFNK) solver.  
Unless otherwise stated, the JFNK solver exits when the residual is below a specified threshold which we set as $10^{-2}$ times the stopping criterion for the iterative methods (see  \eqref{eqn:stopping_criterion_SI}).

\subsection{Time stepping methods}\label{subsec:time-stepping}

For higher-order time integration, we use the diagonally implicit Runge-Kutta (DIRK) method of third order that is L-stable; see, for example, \cite{kennedy2016diagonally,alexander1977diagonally}. 
An $s$-stage RK method is expressed by the Butcher tableau
\begin{align}\label{eqn:butcher_tableau}
    \begin{array}{c | c }
        b & A \\ \hline
          & c
    \end{array}
    \qquad\qquad\qquad\qquad
    \begin{array}{c|c c c c}
    \alpha_3 & \alpha_3 & 0 & 0 \\
    \tfrac{1+\alpha_3}{2} & \tfrac{1-\alpha_3}{2} & \alpha_3 & 0 \\ 
    1 & \gamma_1 & \gamma_2 & \alpha_3 \\ \hline
    & \gamma_1 & \gamma_2 & \alpha_3
    \end{array}
\end{align}
where $A=[a_{ij}]\in\mathbb{R}^{s\times s}$, $b=[b_i]\in\mathbb{R}^s$, and $c=[c_i]\in\mathbb{R}^s$. 
The generic tableau on the left of \eqref{eqn:butcher_tableau} corresponds to an RK method on the ODE $y'(t)=F(t,y)$ given by
\begin{subequations}\label{eqn:RK_method}
\begin{align}
    y_i^{\{k\}} &= y^{\{k\}} +  \dt\textstyle{\sum_{j=1}^s} a_{ij} F( t^{\{k\}} + c_i\dt,y_j^{\{k\}}),\quad i=1,\ldots,s \label{eqn:RK_stages}\\
    y^{\{k+1\}} &= y^{\{k\}} + \dt\textstyle{\sum_{i=1}^s} \,\,\,b_iF( t^{\{k\}} + c_i\dt,y_i^{\{k\}}).\label{eqn:RK_update}
\end{align}
\end{subequations}
For DIRK methods, the matrix $A$ is upper triangular so that each solve in \eqref{eqn:RK_stages} is sequential and the only timestep treated implicitly per stage is $y_i^{\{k\}}$.
The other terms in \eqref{eqn:RK_stages} for $j<i$ are treated as a source.
Therefore, the SI, HOLO, and MM iterative techniques derived for the backward Euler method \eqref{eqn:BE_BGK}
are sufficient for each solve in \eqref{eqn:RK_stages} via a rescale of the timestep $\dt$ to $a_{ii}\dt$ and the addition of an external source. 
Additionally, each solve in \eqref{eqn:RK_stages}  is initialized with $y_{i-1}^{\{k\}}$ where $y_0^{\{k\}}:=y^{\{k\}}$.

For the MM methods \eqref{eqn:BE_MM_SI} and \eqref{eqn:BE_MM}, the external source is built from \eqref{eqn:BGK_MM_BE:g}, and then its moments are taken as the source in \eqref{eqn:BGK_MM_BE:rho}.
This treatment avoids the propagation of errors from $\langle \bme g\rangle_v\approx 0$ within a timestep.

On the right-hand side of \eqref{eqn:butcher_tableau} is the tableau for the DIRK3 method used in this paper, where $\gamma_1 = -\tfrac{1}{4}(6\alpha_3^2-16\alpha_3+1)$, $\gamma_2 = \tfrac{1}{4}(6\alpha_3^2-20\alpha_3+5)$, and $\alpha_3\approx 0.4358665$ is the root of $\alpha^3-3\alpha^2+\tfrac{3}{2}\alpha-\tfrac{1}{6} = 0$ lying in $(\tfrac16,\tfrac12)$.
For consistency in this section, we refer to the backward Euler method as DIRK1.

\subsection{Sod shock tube problem}\label{subsec:riemann}

The Sod shock tube problem is a standard test for the Euler equations and collisional kinetic models \cite{sod1978survey}.  In the kinetic setting, this test poses a Maxwellian initial condition with a discontinuity in the fluid variables, given by
\begin{align}\label{eqn:riemann:initial_fluid_vars}
    (n, u, \theta)^\top
    = (1, 0, 1)^\top
    ~~\text{if}~x \leq 0;
    \qquad
    (n, u, \theta)^\top
    =(0.125, 0, 0.8)^\top
    ~~\text{if}~x > 0,
\end{align}  
where $x\in\W_x=(-1,1)$.  
We set $N_x=256$ and use far-field boundary conditions \eqref{eqn:far-field-bc} on both left and right boundaries. 
We set the truncated velocity domain as $\Omega_v=(-6,6)$.
Unless otherwise stated, we set $N_v=32$ and $\dt = 3.125\times 10^{-3}$ and use a backward Euler (DIRK1) method.
For the DG method with $\kappa$-degree polynomials, 
\begin{equation}\label{eqn:dt_expl}\textstyle
    \dt_{\textrm{expl}} := \frac{1}{2\kappa+1}\frac{1}{v_{\max}}h_x 
\end{equation}
is the usual maximum timestep for an explicit method to remain stable.  In this case $\dt_{\textrm{expl}} = \tfrac{1}{5}\tfrac{1}{6}h_x \approx 2.60\times 10^{-4}$, which is 12 times smaller than $\dt$.

\subsubsection{Consistency of HOLO method}
We first test that the discretization of the HOLO method \eqref{eqn:BE_HOLO} is consistent with the SI method \eqref{eqn:BE_SI} in the sense that the limit of the HOLO method satisfies \eqref{eqn:BE_BGK}.
We set $\nu=\frac{1}{2\dt}$ and perform exactly one timestep for SI and for HOLO.
We iterate SI to $\ell=26$ which produces moments $\bmrho^\tSI$ with a relative residual $\|\mathcal{R}f^{27}\|/\|f^{27}\|=1.29\times10^{-13}$, where $\mathcal{R}$ is defined in \eqref{eqn:BE_residual}.
We then run HOLO acceleration with the same parameters until stagnation is reached at 16 iterations. 
We set the exit threshold for the JFNK solver used determine $\bmrho^{\ell+1}$ in \eqref{eqn:BE_HOLO:LO} to $10^{-14}$.

In \Cref{tab:holo_per_l} we list several quantities of interest.
The first two columns compare two possible termination criteria for HOLO. 
The first column reports the relative $L^2$ difference in the moments of $f^\ell$ between iterations, that is,
\begin{equation}\label{eqn:stopping_criterion_SI}
    \frac{\|\bmrho_{f^{\ell+1}}-\bmrho_{f^{\ell}}\|_{\W_x}}{\|\bmrho_{f^{\ell+1}}\|_{\W_x}} < \text{tol}.
\end{equation}
Condition \eqref{eqn:stopping_criterion_SI} is a standard termination criterion for SI.
The second column uses the relative $L^2$ difference between $\bmrho_{f^{\ell}}$ and the accelerated moments $\bmrho^{\ell+1}$, that is,
\begin{equation}\label{eqn:stopping_criterion_HOLO}
    \frac{\|\bmrho^{\ell+1}-\bmrho_{f^{\ell}}\|_{\W_x}}{\|\bmrho_{f^{\ell+1}}\|_{\W_x}} < \text{tol}.
\end{equation}
The authors in \cite{taitano2014moment} used a version of \eqref{eqn:stopping_criterion_HOLO} to terminate the HOLO method. 
The latter three columns compare the moments of the fluid solve in HOLO \eqref{eqn:BE_HOLO:LO} to the converged SI moments.  
The results in \Cref{tab:holo_per_l} demonstrate that (i) the HOLO and SI approximations agree and (ii) the DG method naturally provides the consistency that, in a finite difference setting, requires additional consistency terms \cite{taitano2014moment}.
While \eqref{eqn:stopping_criterion_HOLO} could be used in lieu of \eqref{eqn:stopping_criterion_SI} for the HOLO method, we will continue to use \eqref{eqn:stopping_criterion_SI} as the termination criterion for all methods for the rest of the paper.

\begin{table}[ht]
    \centering
    \begin{tabular}{ c || c c | c c c}
         & \multicolumn{2}{c|}{Criterion}
         & \multicolumn{3}{c}{$\dfrac{\|\rho^{d,\ell+1}-\rho^{d,\textrm{SI}}\|_{\W_x}}{\|\rho^{d,\textrm{SI}}\|_{\W_x}}$} \\
         $\ell$ & 
         \eqref{eqn:stopping_criterion_SI}  &
         \eqref{eqn:stopping_criterion_HOLO} &
         $d=1$ &
         $d=2$ &
         $d=3$ \\ \hline
         0 &   2.86e-02 &   2.91e-02 &   2.06e-03 &   6.67e-02 &   1.70e-03 \\
         4 &   4.16e-07 &   4.92e-07 &  1.40e-07 &   3.50e-06 &   6.88e-08 \\
         8 &   1.01e-10 &  1.32e-10 &  5.00e-11 &   9.47e-10 &   1.24e-11 \\
         12 &   5.06e-14  & 6.74e-14 &  2.71e-14 &   5.00e-13 &   8.36e-15 \\
         16 &   1.02e-15 &   1.59e-15 &  2.49e-15 &   1.38e-13 &   5.42e-15
    \end{tabular}
    \caption{
    Sod shock tube (\Cref{subsec:riemann}): Consistency of the HOLO method when compared to the SI method. The first two columns report the relative difference of moments of HOLO under two different metrics at iteration $\ell$. The last three columns report the relative difference of the moments of HOLO versus SI.
    Here $\rho^{d,\tSI}$ and $\rho^{d,\ell+1}$ correspond to the $d$-th components of moments of SI \eqref{eqn:BE_SI} and the low-order moments $\bmrho^{\ell+1}$ from HOLO \eqref{eqn:BE_HOLO}, respectively.
    The moments of SI are converged to a relative residual of $1.29\times10^{-13}$. 
    At convergence, the HOLO and SI iterations agree up to the SI residual.
    }
    \label{tab:holo_per_l}
\end{table}

We now provide a case where HOLO is inconsistent in the $\ell$-limit. 
\Cref{prop:HOLO_moment_convergence,prop:HOLO_vs_SI_equiv} prove the consistency of HOLO to SI if we assume $\mE^*$ from \eqref{eqn:euler_timestep_map} is injective.
However, it is well-known that the Euler flux $\bmF$ in \eqref{eqn:euler_flux} is indefinite; that is, $\partial_\bmeta\bmF(\bmeta)$ can have both positive and negative eigenvalues.
Hence, as $\dt\nu$ remains constant and $\dt$ increases, we expect $\mE^*$ to eventually be non-injective, in which case the conclusions of \Cref{prop:HOLO_moment_convergence,prop:HOLO_vs_SI_equiv} may not hold.
To test the hypothesis above, we run a single timestep for HOLO and SI for increasing $\dt$ and fixing $\nu = \frac{1}{2\dt}$ so that $\dt\nu = 1/2$ is constant across runs.
For SI, we iterate the method until $\|\mR f^{\ell}\|/\|f^{\ell}\|\leq 10^{-9}$.
For HOLO, we iterate until \eqref{eqn:stopping_criterion_SI} is satisfied with a tolerance of $10^{-8}$.
In \Cref{tab:holo_vs_SI_break}, we list several metrics for the HOLO iterates, including the stopping criteria \eqref{eqn:stopping_criterion_SI} and \eqref{eqn:stopping_criterion_HOLO}, the relative residual, and the relative low-order and distribution moment errors against the converged SI moments. 
For $\dt\leq 2\times 10^{-2}$, HOLO is consistent to SI up to the tolerance of $10^{-8}$.
However, as $\dt$ increases, the HOLO method continues to converge in terms of \eqref{eqn:stopping_criterion_SI}, but the consistency error increases.
Based on the analysis in \Cref{prop:HOLO_moment_convergence,prop:HOLO_vs_SI_equiv}, we conjecture that this lack of consistency is because $\dt$ is large enough so that $\mE^*$ is no longer injective.
Fortunately, the stopping criterion for HOLO \eqref{eqn:stopping_criterion_HOLO} exactly measures this inconsistency.

\begin{table}[ht]
    \centering
    {\setlength{\tabcolsep}{4pt}
    \begin{tabular}{ c || c | c c | c@{\hspace{-2pt}}c@{\hspace{-2pt}}c}
         & & \multicolumn{2}{c|}{Criterion}
         &
         \multirow{2}{*}{ $\dfrac{\|\mathcal{R}f^{\ell+1}\|}{\|f^{\ell+1}\|}$ }
          &
         \multirow{2}{*}{ $\dfrac{\|\bmrho_{f^{\ell+1}}-\bmrho^\tSI\|_{\W_x}}{\|\bmrho^\tSI \|_{\W_x}}$ }
         &
         \multirow{2}{*}{ $\dfrac{\|\bmrho^{\ell+1}-\bmrho^\tSI\|_{\W_x}}{\|\bmrho^\tSI\|_{\W_x}}$ }
         \\ 
         $\dt$ & $\ell$ & \eqref{eqn:stopping_criterion_SI} & \eqref{eqn:stopping_criterion_HOLO}
         &
         &
         \\[0.2em] \hline
         5.00e-03 & 7 & 5.57e-09 & 8.63e-09 & 2.56e-09 & 9.31e-10 & 4.33e-09 \\
         1.00e-02 & 8 & 4.77e-09 & 1.07e-08 & 4.70e-09 & 1.07e-09 & 8.37e-09 \\
         2.00e-02 & 7 & 6.35e-09 & 1.64e-08 & 8.45e-09 & 1.34e-09 & 1.56e-08 \\
         4.00e-02 & 7 & 7.30e-09 & 6.81e-08 & 4.60e-07 & 5.00e-07 & 4.76e-07 \\
         8.00e-02 & 11 & 7.26e-09 & 1.47e-05 & 1.59e-04 & 1.95e-04 & 1.90e-04
    \end{tabular}
    }
    \caption{
    Sod shock tube (\Cref{subsec:riemann}): Consistency of the HOLO method as $\dt$ increases while $\dt\nu$ remains constant.
    Here SI \eqref{eqn:BE_SI} with moments denoted by $\bmrho^\tSI$ is iterated until the relative residual is below $10^{-9}$.
    The HOLO method \eqref{eqn:BE_HOLO} terminates at iteration $\ell$ when \eqref{eqn:stopping_criterion_SI} is below $10^{-8}$.  
    As $\dt$ increases with $\dt\nu$ fixed, the consistency of HOLO to SI is lost.
    }
    \label{tab:holo_vs_SI_break}
\end{table}

\subsubsection{Comparison of iteration counts}

We next compare the number of iterations for the four methods listed in \Cref{sect:solvers}: SI \eqref{eqn:BE_SI}, HOLO \eqref{eqn:BE_HOLO}, MM-L \eqref{eqn:BE_MM_SI}, and MM-HOLO \eqref{eqn:BE_MM}.  
We run ten timesteps for $\nu$ such that $\dt\nu$ ranges from $10^{-1}$ to $10^{5}$.
In each timestep, we run until the stopping criterion specified in \eqref{eqn:stopping_criterion_SI} is less than $10^{-8}$.
For MM-L and MM-HOLO, $\bmrho_{f^\ell} := \bmrho^\ell + \bmrho_{g^\ell}$ is used in \eqref{eqn:stopping_criterion_SI}.

\Cref{tab:iterates_vs_dt_tau} shows the average number of iterations per timestep for each method.
The SI method performs as expected: the number of iterations to achieve convergence worsens with larger $\dt\nu$, which suggests the contraction constant in the nonlinear case takes the same form as the one in \eqref{eqn:lin_bgk:SI_error:1}.
In highly-collisional regimes, this constant is close to 1, leading to prohibitive iteration counts.
Average iterations for the HOLO method are lower than SI for each collision frequency listed.
Moreover, HOLO is far superior to SI in the moderate to high collisional regimes which agrees with the formal estimates in for linear case (see \Cref{prop:lin_bgk:HOLO_error}).
MM-L carries the opposite problem as SI -- the iteration count is only viable in high to moderate collisional regimes and the performance falls off as the collision frequency is lowered.  
Once $\nu\dt < 1$, the MM-L method does not even converge, most likely because in this regime $g^\ell$ is not sufficiently small and thus the inconsistency $\langle\bme g^{\ell}\rangle_v \neq 0$ is not negligible.
This shows that MM-L is impractical when compared to HOLO or SI, and we do not consider the MM-L method for any further numerical results.
Finally, adding a proper heat flux correction term in the MM-HOLO method \eqref{eqn:BE_MM:rho} fixes the issues with MM-L in moderate to low collisional regimes.
We find that MM-HOLO and HOLO perform similarly when $\nu\dt\approx 1$.
If $\nu\dt \gg 1$, then MM-HOLO is slightly better.
When $\nu\dt \ll 1$, MM-HOLO convergence is slightly worse.

\begin{table}[ht]
    \centering
    {\setlength\tabcolsep{5 pt}
    \begin{tabular}{ l | c c c c c c c c c c}
         $\dt\nu$ & $10^{-4}$ & $10^{-3}$ & $10^{-2}$ & $10^{-1}$ & $10^{0}$ & $10^{1}$ & $10^{2}$ & $10^3$ & $10^4$ \\ 
         $\nu = 3.2\times 10^{n}, n = $ & $-2$ & $-1$ & $0$ & $1$ & $2$ & $3$ & $4$ & $5$ & $6$\\ \hline
         SI\hfill\eqref{eqn:BE_SI} & 3 & 3.6 & 4.4 & 7 & 20.2 & 123.8 & $>900$ & -- & -- \\ 
         HOLO\hfill\eqref{eqn:BE_HOLO} & 3 & 3 & 3.7 & 4.8 & 7.1 & 8.3 & 6.5 & 6.5 & 6.5 \\ 
         MM-L\hfill\eqref{eqn:BE_MM_SI} & -- & -- & -- & DNC & 43.3 & 11.9 & 5.1 & 4.5 & 3 \\
         MM-HOLO\hfill\eqref{eqn:BE_MM} & 5.1 & 5.1 & 5.1 & 5.4 & 7.2 & 8.1 & 4.7 & 3.4 & 3
    \end{tabular}
    }
    \caption{
    Sod shock tube (\Cref{subsec:riemann}): The average number of iterations per timestep over 10 timesteps for each method applied to the Sod shock tube with tolerance $10^{-8}$. 
    Here, DNC stands for ``did not converge'', and listings of ``~--~'' denote that the run was not attempted.  
    Unlike SI and MM-L, the HOLO and MM-HOLO methods are feasible over all collision scales.
    }
    \label{tab:iterates_vs_dt_tau}
\end{table}

\subsubsection{Compression benefits of the MM-HOLO method}
\label{sec:LR_Riemann}

We now test the compression benefits of the MM-HOLO method \eqref{eqn:BE_MM} versus HOLO acceleration \eqref{eqn:BE_HOLO} over multiple collision scales.
We first show that in areas of high collisionality, the micro perturbation $g$ is small.
\Cref{fig:riemann:g_surf} plots the micro distribution $g$ for $N_v=64$ at $t=0.1$ for both $\nu=10^{2}$ and $10^{4}$ and verifies that $g=\mathcal{O}(\nu^{-1})$.
\begin{figure}
    \centering
    \begin{subfigure}{0.45\textwidth}
        \includegraphics[width=\textwidth]{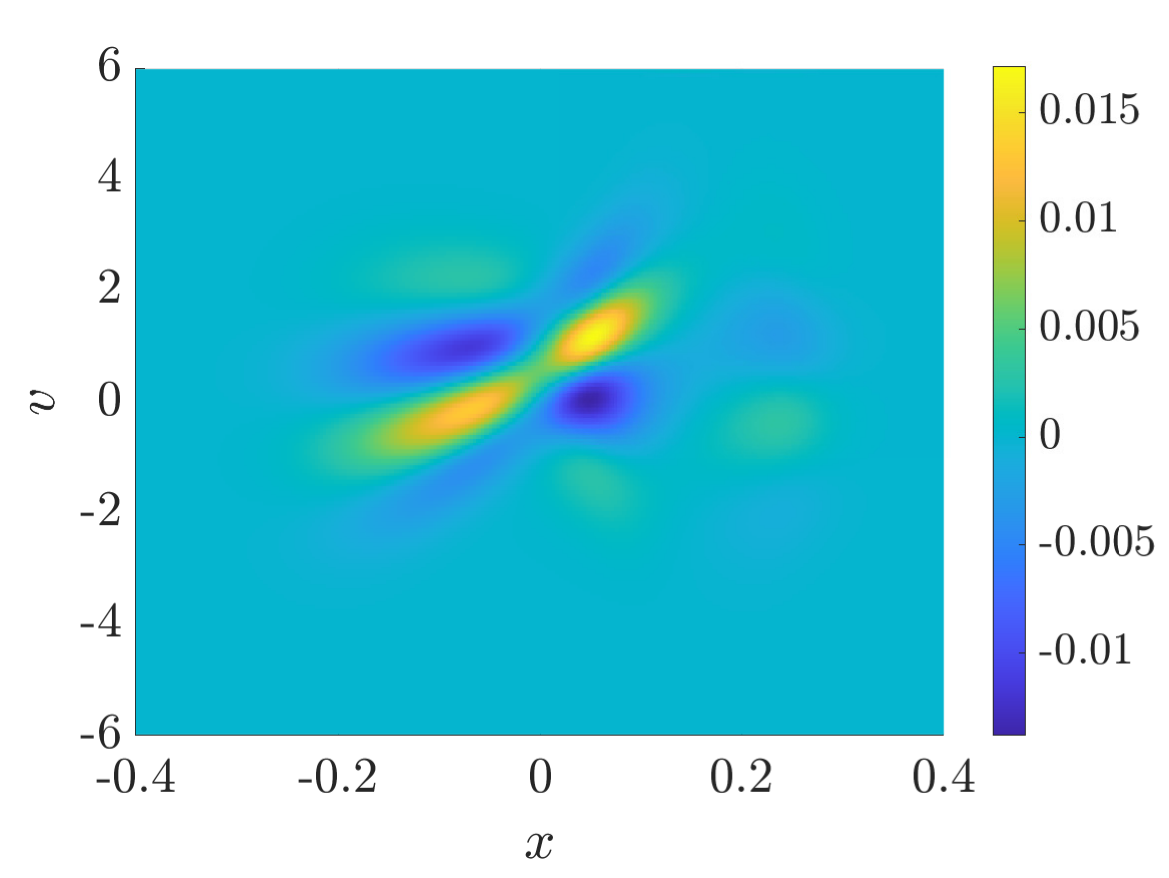}
        \caption{$\nu = 10^{2}$}
        \label{fig:riemann:g_surf_tau_1e-2_DIRK1}
    \end{subfigure}
    \begin{subfigure}{0.45\textwidth}
        \includegraphics[width=\textwidth]{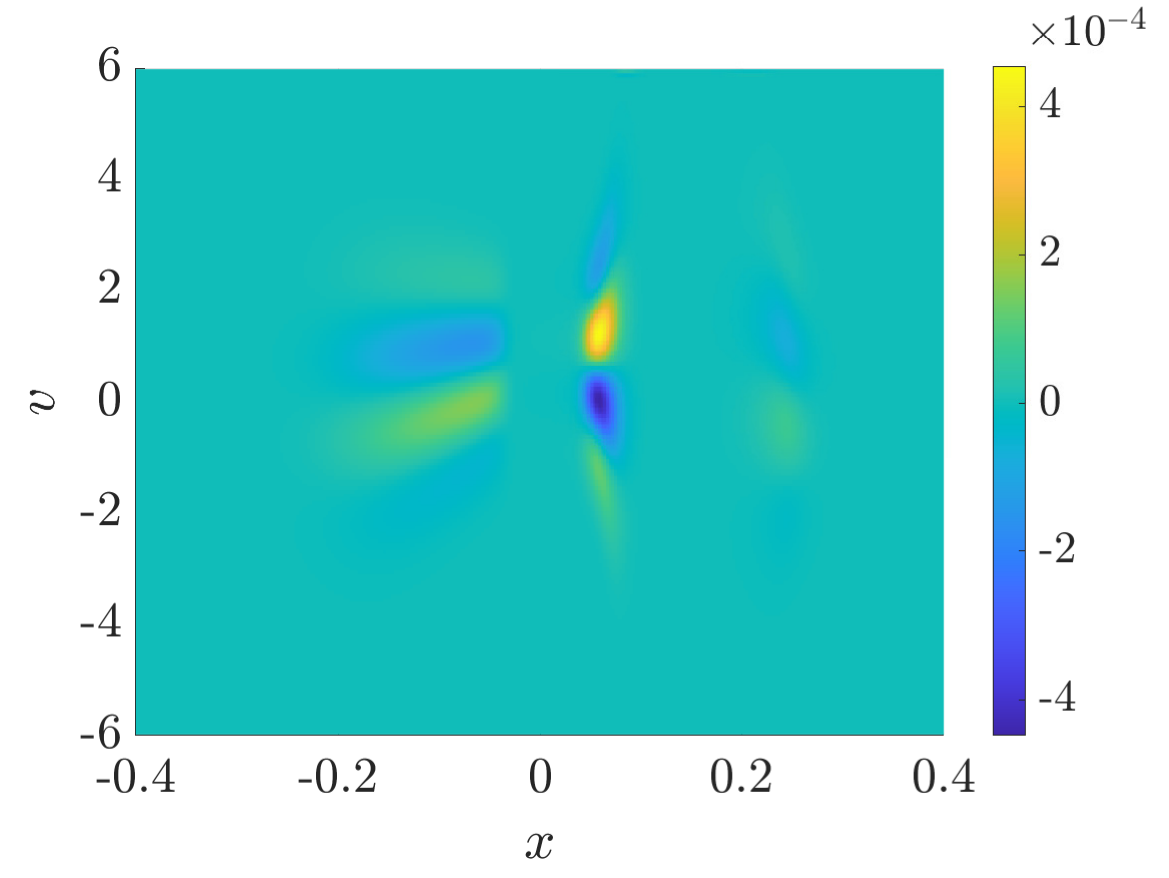}
        \caption{$\nu = 10^{4}$}
        \label{fig:riemann:g_surf_tau_1e-4_DIRK1}
    \end{subfigure}
    \caption{
    Sod shock tube (\Cref{subsec:riemann}): Plots of the converged micro distribution $g$ from the MM-HOLO method. 
    Here $N_v=64$ and $t=0.1$.  As $\nu$ increases, $g$ becomes smaller.
    } 
    \label{fig:riemann:g_surf}
\end{figure}
Since only the perturbation $g$ of the MM-HOLO ansatz $M(\bmrho)+g$ is discretized in phase-space,
when $\nu$ is large, we expect the MM-HOLO method to be more compressible than the HOLO approximation of $f$.
We verify this claim using two compression methods: coarse velocity discretization (following the approach of \cite[\S 6.2]{endeve2022conservative}) and low-rank approximation. 

\paragraph{Coarse velocity discretization}
Both the HOLO and MM-HOLO methods are run for 32 timesteps to a final time $t=0.1$ for $\nu\in\{10^{2},10^{3},10^{4}\}$ and $N_v\in\{4,6,8,10,12\}$ with DIRK1 and DIRK3 schemes.
Within a stage in a timestep, the HOLO and MM-HOLO methods terminate when \eqref{eqn:stopping_criterion_SI} is satisfied with a tolerance of $10^{-8}$. 
To build a reference solution, we use the average of the HOLO and MM-HOLO solutions at $t=0.1$ with $N_v=64$.
Each plot in \Cref{fig:riemann:HOLO_vs_MM_compression} reports the number of velocity degrees of freedom (DOF) per physical DOF versus the relative $L^2$ error against the reference fluid variables.
For HOLO, the discrete distribution $f\in V_h$ has a velocity DOF of $3N_v$ per physical DOF.  
Since MM-HOLO requires storage of both the moments $\bmrho\in\Vxhc$ and the micro distribution $g\in V_h$, its velocity DOF per physical DOF is $3N_v+3$.

When $\nu=10^{2}$, \Cref{fig:riemann:tau_1e-2_DIRK1,fig:riemann:tau_1e-2_DIRK3} show that the HOLO and MM-HOLO methods are largely comparable.
In this case, $f$ is still sufficiently far away from the Maxwellian such that the micro perturbation $g$ is sufficiently large and contains finer-level detail in $v$ that is necessary for accuracy.
When $\nu=10^{3}$, the results in \Cref{fig:riemann:tau_1e-3_DIRK1,fig:riemann:tau_1e-3_DIRK3} start to show the compression benefits of MM-HOLO over HOLO; this is especially evident in the DIRK3 method.
Finally, with $\nu=10^{4}$, \Cref{fig:riemann:tau_1e-4_DIRK1,fig:riemann:tau_1e-4_DIRK3} show the largest improvement in MM-HOLO over HOLO for lower $N_v$.
In particular, the MM-HOLO method saturates at $N_v=6$ for DIRK3 while HOLO requires a velocity resolution of $N_v=10$ to reach the same error.

\begin{figure}
    \captionsetup[subfigure]{aboveskip=5pt,belowskip=5pt}
    \centering
    \begin{subfigure}{0.32\textwidth}
        \includegraphics[width=\textwidth]{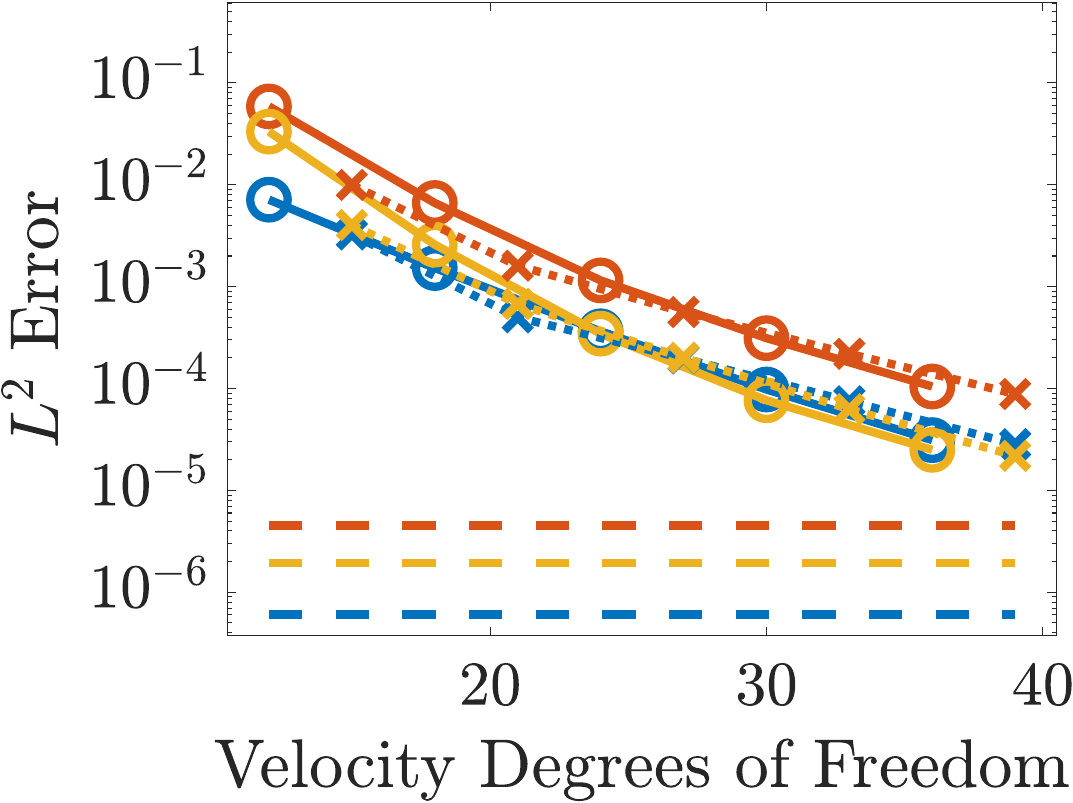}
        \caption{DIRK1 -- $\nu = 10^{2}$}
        \label{fig:riemann:tau_1e-2_DIRK1}
    \end{subfigure}
    \begin{subfigure}{0.32\textwidth}
        \includegraphics[width=\textwidth]{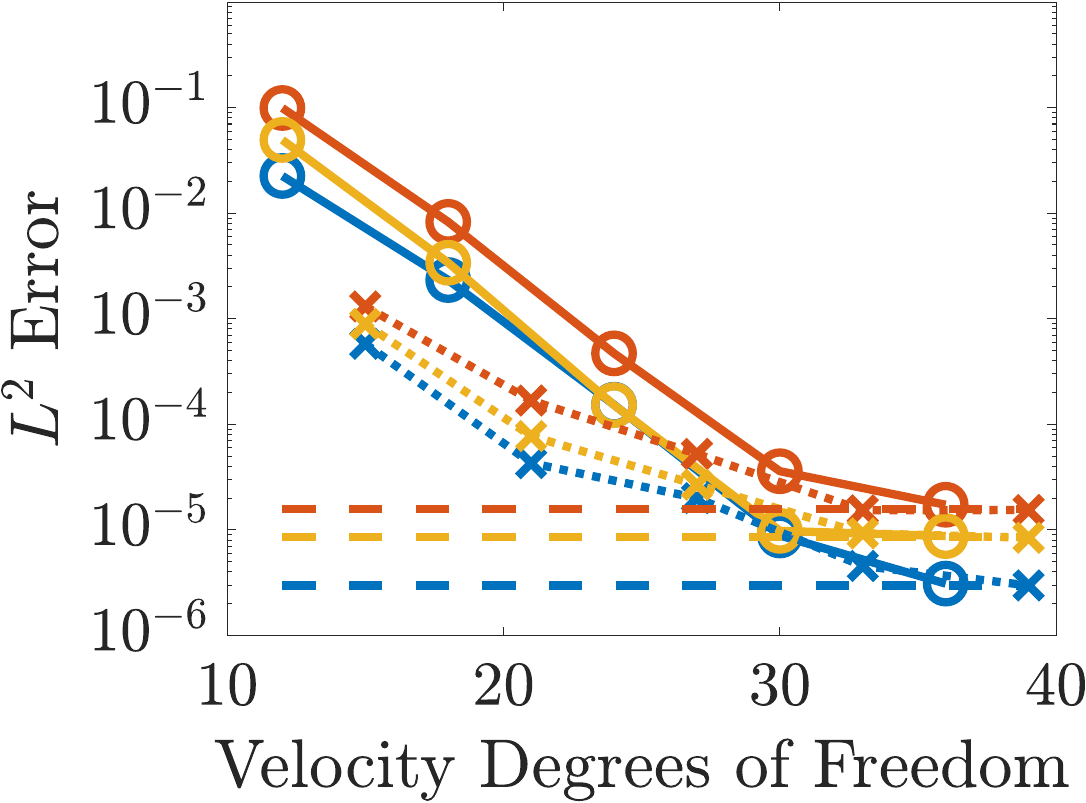}
        \caption{DIRK1 -- $\nu = 10^{3}$}
        \label{fig:riemann:tau_1e-3_DIRK1}
    \end{subfigure}
    \begin{subfigure}{0.32\textwidth}
        \includegraphics[width=\textwidth]{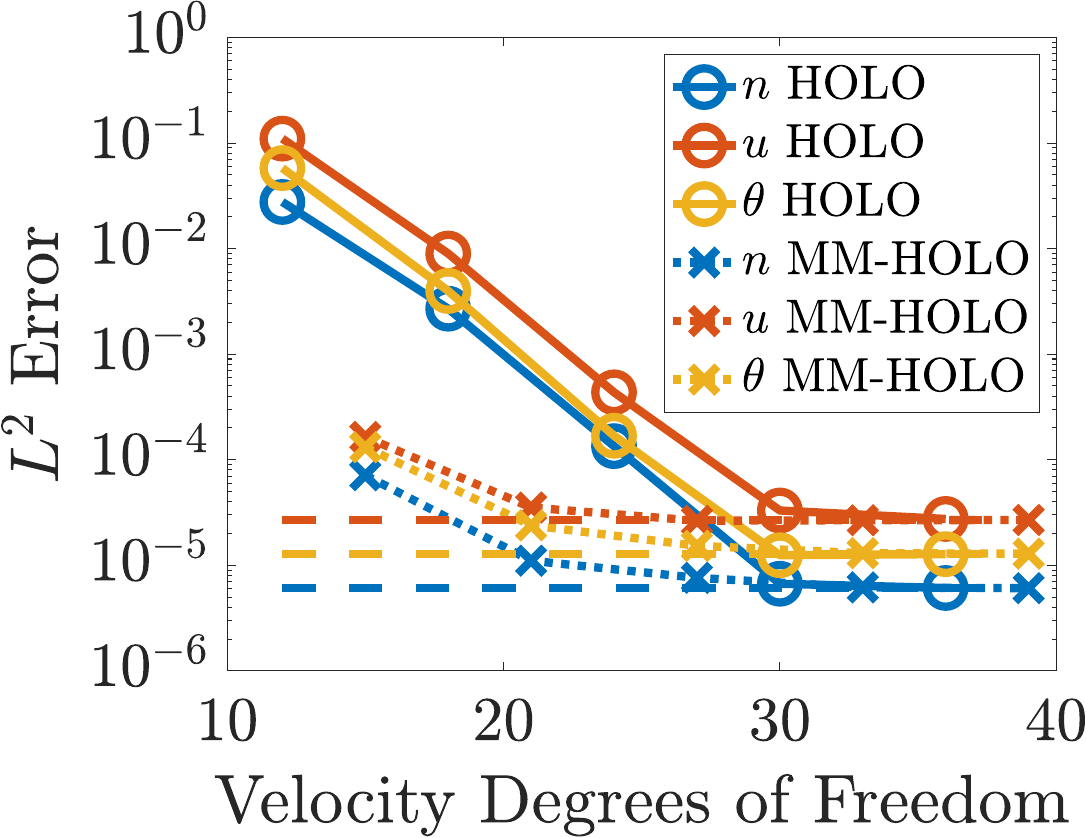}
        \caption{DIRK1 -- $\nu = 10^{4}$}
        \label{fig:riemann:tau_1e-4_DIRK1}
    \end{subfigure}

    \begin{subfigure}{0.32\textwidth}
        \includegraphics[width=\textwidth]{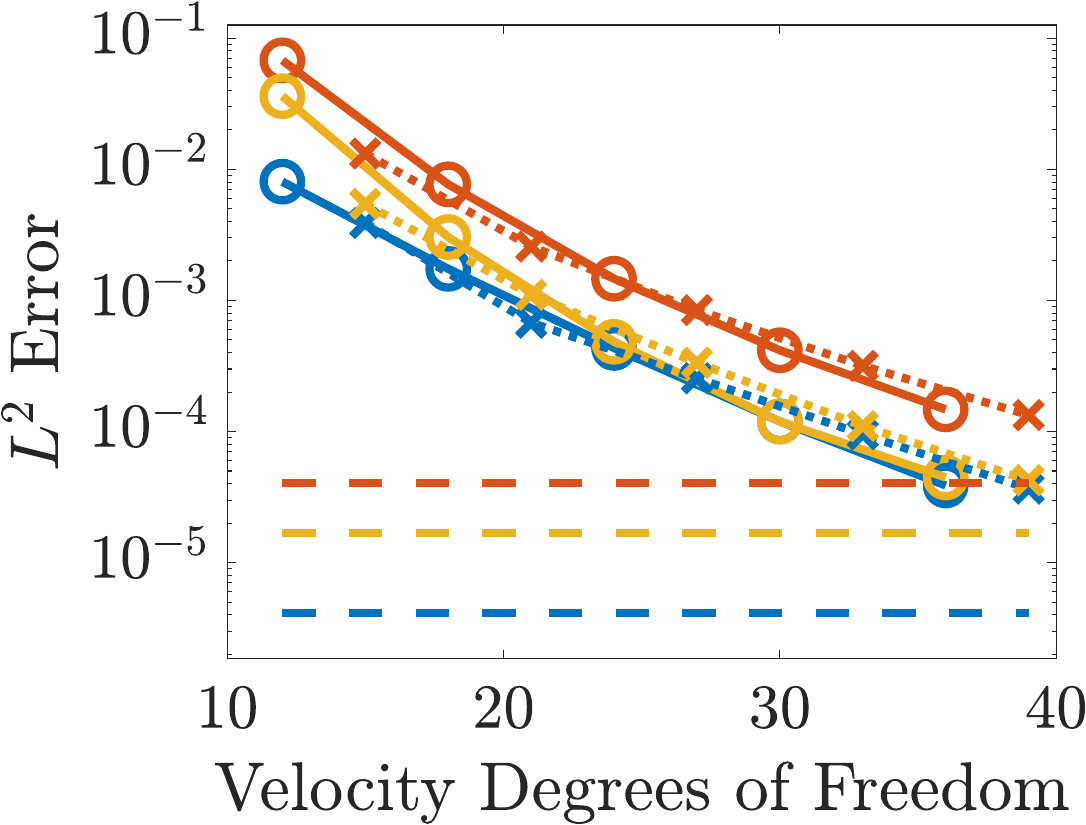}
        \caption{DIRK3 -- $\nu = 10^{2}$}
        \label{fig:riemann:tau_1e-2_DIRK3}
    \end{subfigure}
    \begin{subfigure}{0.32\textwidth}
        \includegraphics[width=\textwidth]{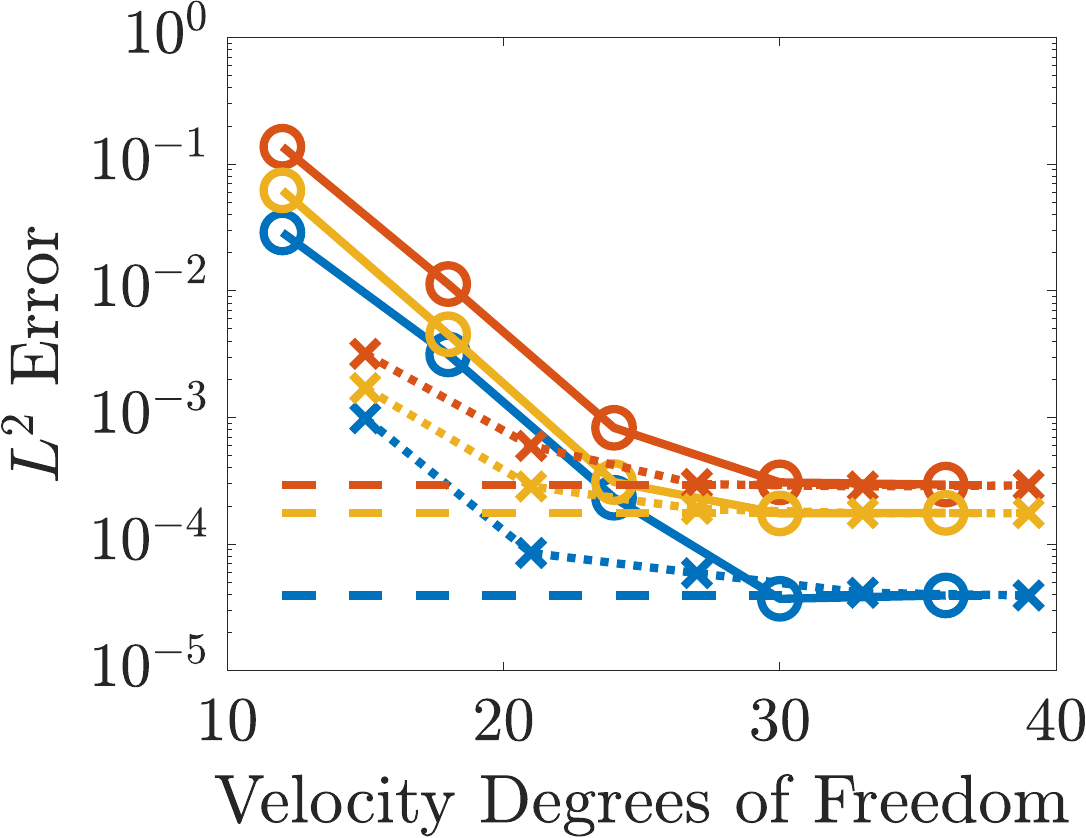}
        \caption{DIRK3 -- $\nu = 10^{3}$}
        \label{fig:riemann:tau_1e-3_DIRK3}
    \end{subfigure}
    \begin{subfigure}{0.32\textwidth}
        \includegraphics[width=\textwidth]{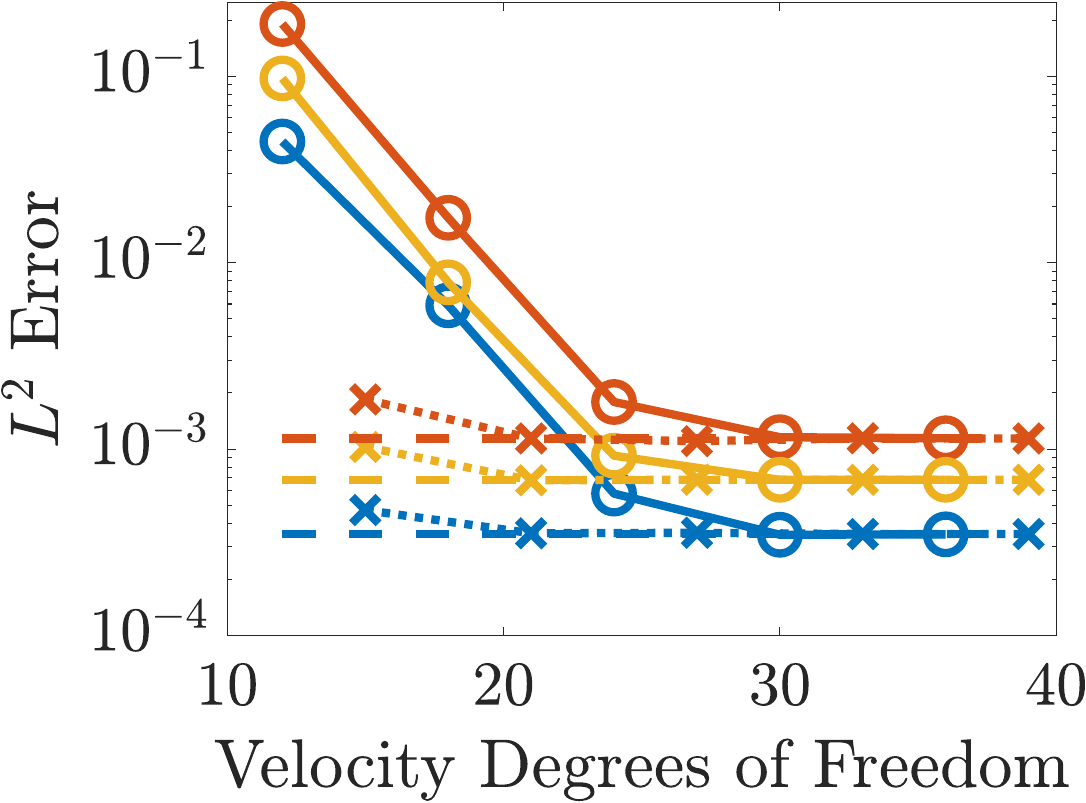}
        \caption{DIRK3 -- $\nu = 10^{4}$}
        \label{fig:riemann:tau_1e-4_DIRK3}
    \end{subfigure}
    \caption{
    Sod shock tube (\Cref{subsec:riemann}): Relative $L^2$ error of the fluid variables of the HOLO and MM-HOLO methods plotted against the velocity degrees of freedom at $t=0.1$.
    We compare the methods with three different collision frequencies and two DIRK methods.
    We set $N_x=256$ and consider $N_v\in\{4,6,8,10,12\}$.
    The reference solution is defined be the average of the MM-HOLO and HOLO solutions computed with $N_x=256$ and $N_v=64$.
    The dashed lines represent the saturation point of the methods which is defined to be half of the relative difference between the two reference solutions.
    The legend in \Cref{fig:riemann:tau_1e-4_DIRK1} is consistent across the other figures.
    When $\nu\gg 1$, the MM-HOLO method is more accurate than HOLO on coarse velocity meshes. 
    }
    \label{fig:riemann:HOLO_vs_MM_compression}
\end{figure}

\paragraph{Low-rank approximation} 
We now see how both reference solutions compare when compressed using low-rank techniques.  
Given a kinetic distribution $f\in V_h$, its coefficient representation $F$ in a basis can be viewed as a $\dof_x\times \dof_v$ matrix where $\dof$ represents the degrees of freedom in each dimension and, in this case, is given by $\dof_x = 3N_x$ and $\dof_v = 3N_v$.
To construct $F$, we employ a nodal DG representation where the nodes are given by a rescaling of the tensored 3-point Gauss-Legendre rule on each local element in $x$ and $v$.
We run the HOLO and MM-HOLO methods for $N_v=64$ to $t=0.1$ with backward Euler time stepping.
For the HOLO method, we use a singular value decomposition (SVD) of the coefficient matrix: $F=XSV^\top$, where $X\in\mathbb{R}^{ \dof_x\times m}$ and $V\in\mathbb{R}^{\dof_v\times m}$ are orthogonal, and $S=\text{diag}(\sigma_1,...,\sigma_{m})\in\mathbb{R}^{m\times m}$ is diagonal and $m=\min\{\dof_x,\dof_v\}$.
Given $r\geq 1$, let $F_r = X_rS_rV_r^\top$ where $S_r = \text{diag}(\sigma_1,\ldots,\sigma_r)\in \mathbb{R}^{r\times r}$, and $X_r$ and $V_r$ are the first $r$ columns of $X$ and $V$ respectively. 
The low-rank matrix $F_r$ corresponds to a function $f_r\in V_h$ that we compare against the reference solution.
We define the compression factor as the ratio of the storage cost of the low-rank $F_r$ versus the storage cost of $F$, i.e.,
\begin{equation}\label{eqn:riemann:compression_factor:HOLO}
    \textrm{Compression Factor (\%)} = 100\frac{r(\dof_x+\dof_v+1)}{\dof_x\dof_v}.
\end{equation}
For the MM-HOLO method, we perform the same low-rank operations as above on the micro distribution $g$ to produce a low-rank approximation $g_r\in V_h$, resulting in an approximation $f_r = M(\bmrho) + g_r$ to $f$. 
Because we have to keep track of the moments $\bmrho\in\Vxhc$ separately, this representation has a compression factor
\begin{equation}\label{eqn:riemann:compression_factor:MM}
    \textrm{Compression Factor (\%)} = 100\frac{3\dof_x + r(\dof_x+\dof_v+1)}{\dof_x\dof_v}.
\end{equation}
If \Cref{fig:riemann:LR:HOLO_vs_MM_compression}, we plot the compression factor, a function of the rank $r$, versus the relative $L^2$ error for the HOLO and MM-HOLO methods and $\nu\in\{10^{2},10^{3},10^{4}\}$.
For $\nu=10^{2}$, there is little difference in the compression of MM-HOLO vs HOLO.  
However, as $\nu$ increases, the compression benefit of MM-HOLO begins.  
For $\nu=10^{3}$, the MM-HOLO method is more accurate for a given compression factor, but both methods saturate at similar compression factors.
At $\nu=10^{4}$, the MM-HOLO method is significantly more accurate for a given compression factor and saturates sooner.

\begin{figure}
    \centering
    \begin{subfigure}{0.32\textwidth}
        \includegraphics[width=\textwidth]{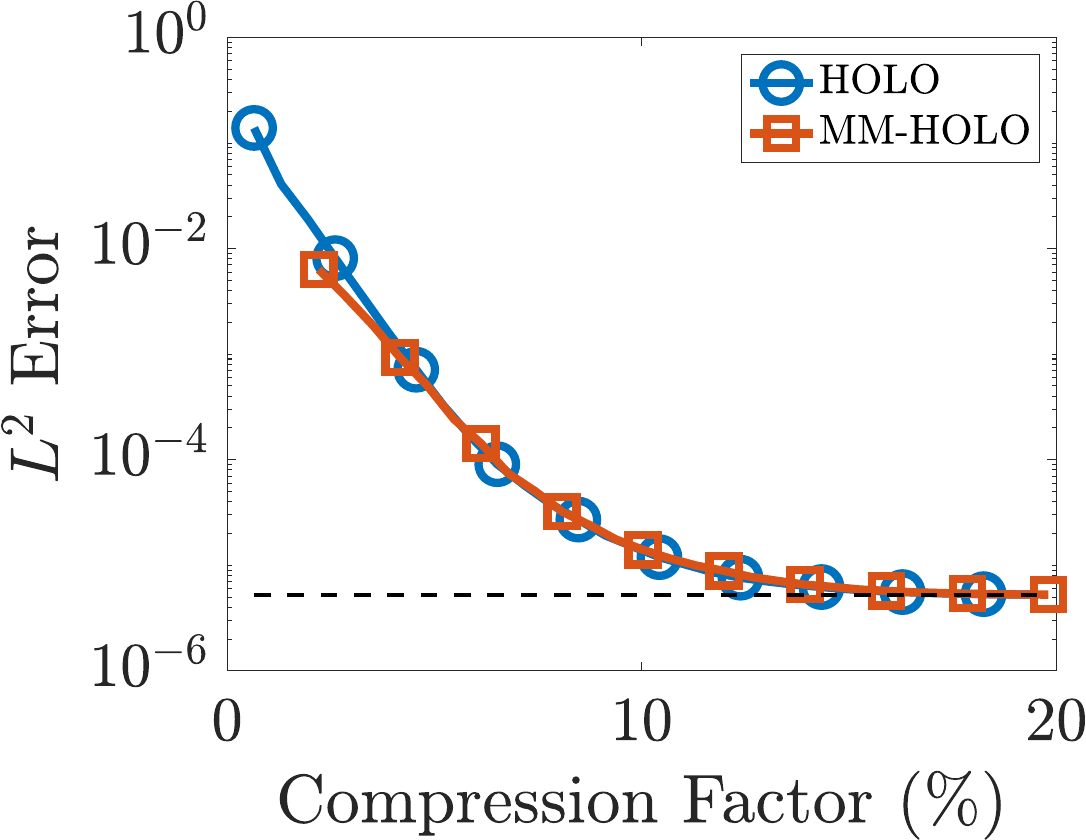}
        \caption{$\nu = 10^{2}$}
        \label{fig:riemann:LR:tau_1e-2_DIRK1}
    \end{subfigure}
    \begin{subfigure}{0.32\textwidth}
        \includegraphics[width=\textwidth]{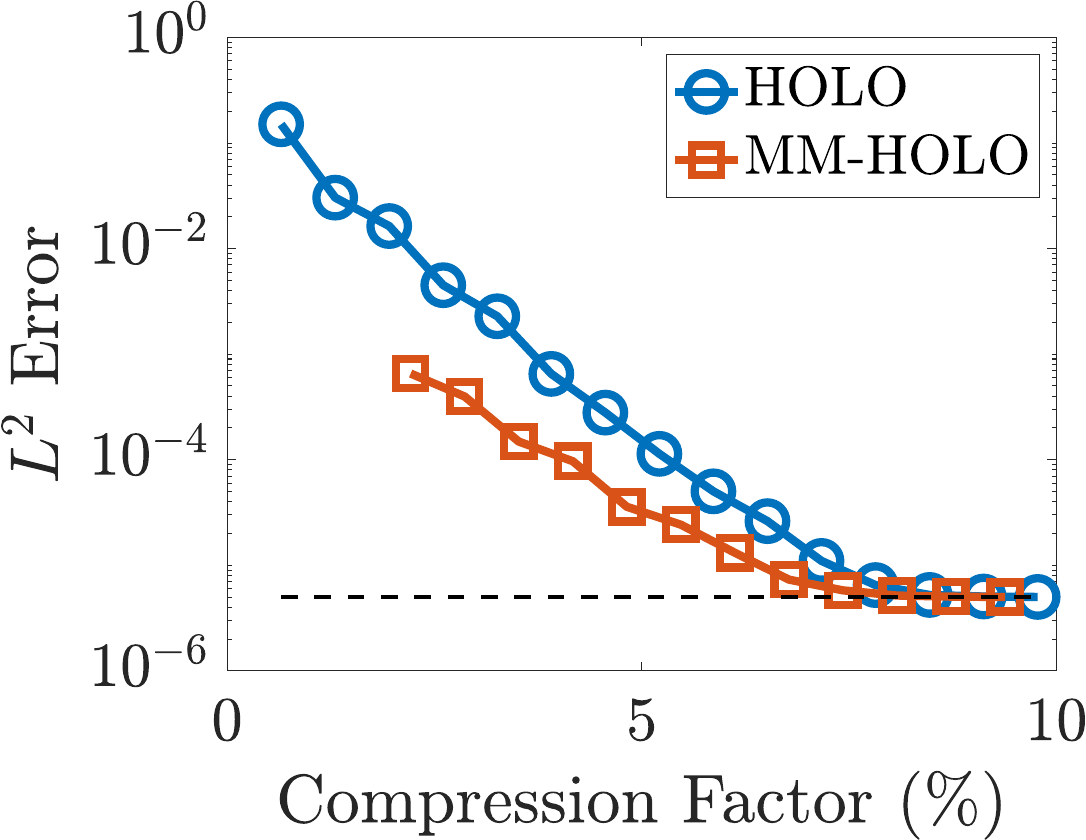}
        \caption{$\nu = 10^{3}$}
        \label{fig:riemann:LR:tau_1e-3_DIRK1}
    \end{subfigure}
    \begin{subfigure}{0.32\textwidth}
        \includegraphics[width=\textwidth]{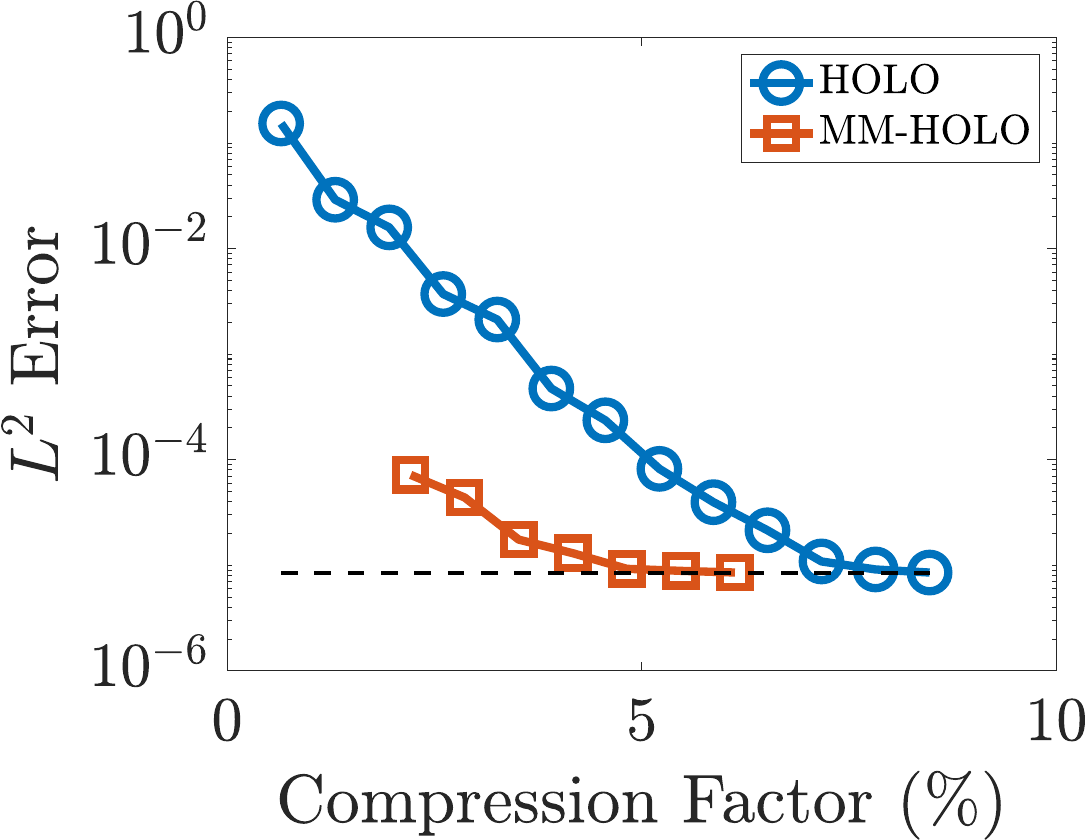}
        \caption{$\nu = 10^{4}$}
        \label{fig:riemann:LR:tau_1e-4_DIRK1}
    \end{subfigure}
    \caption{Sod shock tube (\Cref{subsec:riemann}): Relative $L^2$ error of the low-rank compressed distribution against the reference.  The reference is defined as the average of the MM-HOLO and HOLO solutions computed with $N_x = 256$ and $N_v=64$.  The compression factors are given for the HOLO and MM-HOLO methods in \eqref{eqn:riemann:compression_factor:HOLO} and \eqref{eqn:riemann:compression_factor:MM} respectively.
    Compression of the micro distribution $g$ is more efficient than compression of the kinetic distribution $f$.}
    \label{fig:riemann:LR:HOLO_vs_MM_compression}
\end{figure}

\subsection{Sudden wall heating}\label{subsec:suddenheat}

We next test a sudden wall heating boundary layer problem that is a 1D-1V analog of the example given in \cite{aoki1991numerical}. 
In this problem, the temperature at the left boundary differs from the temperature of the initial condition; this leads to a boundary layer formed near the wall and a shock that travels across the domain.

We let $\W_x = (0,6)$ and $\W_v = (-8,8)$, and set $f(t^{\{0\}}) = M(\bmrho^{\{0\}})$, where $\bmrho^{\{0\}} = [1,0,\tfrac12]^\top$.
We use the far-field boundary condition \eqref{eqn:far-field-bc} at $x=6$.
At the wall $x=0$, we use the sudden wall heating boundary condition $f^- = \sigma_f M(\bmrho_{-})$, where $\bmrho_{-} = [1,0,1]^\top$, and   
\begin{equation}\label{eqn:sigma_definition}\textstyle
    \sigma_f = -\left(\frac{2\pi}{2}\right)^{1/2}\langle vf(0,v,t) \rangle_{\{v < 0\}}
\end{equation}
is a reflection parameter that enforces mass conservation.
We set $\nu=128$.
From \cite[Equation 10]{aoki1991numerical}, this sets the mean-free-path and mean-free-time respectively as
\begin{equation}\label{eqn:mean-free-path-time}\textstyle
\ell_0=\frac{\sqrt{8}}{\sqrt{\pi}\nu} \approx 1.25\times 10^{-2} \qquad\text{and}\qquad t_0 = \frac{2}{\sqrt{\pi}\nu} \approx 8.82\times 10^{-3}.   
\end{equation}
We use a non-uniform mesh on $\W_x$ that is comprised of two uniform meshes with $N_{x,1}$ cells from $(0,0.25)$ and $N_{x,2}$ cells from $(0.25,6)$.
We justify this meshing strategy in \Cref{subsec:suddenheat:implicit_need}.
Note that $0.25\approx 20\ell_0$.
For all tests, we use the DIRK3 time-stepping scheme, set the tolerance for each method at $10^{-6}$, and set the JFNK solver to terminate at $10^{-9}$ unless otherwise specified.

\subsubsection{Need for implicit methods}\label{subsec:suddenheat:implicit_need}

We first demonstrate the need for fully implicit methods for this problem.
It has been shown (see \cite{aoki1991numerical}) that sufficient resolution in $x$ near the wall is needed to properly resolve the boundary layer.
To demonstrate this fact, we solve the problem using the SI method \eqref{eqn:BE_SI} with $N_v = 32$, $N_{x,2} = 58$, and $\dt = 0.025$, and consider three resolutions at the wall: $N_{x,1} \in\{3,25,250\}$, i.e.,  
$h_x \approx \{7\ell_0,0.8\ell_0,0.08\ell_0\}$ respectively.
In \cite{haack2012high} the authors choose 6-8 cells per mean-free path while the authors in \cite{aoki1991numerical} set $h_x\in (0.0025\ell_0,0.1\ell_0)$ depending on the distance from the wall.
We note that \cite{aoki1991numerical,haack2012high} consider a problem posed in three velocity dimensions instead of one; therefore, reference to their results should only be taken qualitatively.  

In \Cref{fig:suddenheat:boundary_layer_compression:dist:t=0.025,fig:suddenheat:boundary_layer_compression:dist:t=1}, we plot a slice of the distribution for each prescribed spatial resolution along $x\approx 0.01\ell_0$.
For $t=\dt$, \Cref{fig:suddenheat:boundary_layer_compression:dist:t=0.025} shows $h_x\approx 7\ell_0$ is not sufficient to capture the discontinuity in velocity between the inflow and outflow boundary of the distribution.
The discontinuity is observable when $h_x\approx0.8\ell_0$ and is fully resolved when $h_x$ is further refined.
As $t$ increases, the discontinuity decreases, see  \Cref{fig:suddenheat:boundary_layer_compression:dist:t=1}, which is consistent with the results in \cite{aoki1991numerical,haack2012high}.
In this case the resolution near the boundary is less important.
In \Cref{fig:suddenheat:boundary_layer_compression:fluid:t=0.025,fig:suddenheat:boundary_layer_compression:fluid:t=1} we plot the bulk velocity $u$, which confirms that the discontinuity is not well captured by the coarse $h_x$ resolution at $t=\dt$ while the results between all three resolutions are similar for longer times. 

\begin{figure}[ht]
    \captionsetup[subfigure]{aboveskip=5pt,belowskip=5pt}
    \centering
    \begin{subfigure}{0.40\textwidth}
        \includegraphics[width=\textwidth, height=0.7\textwidth]{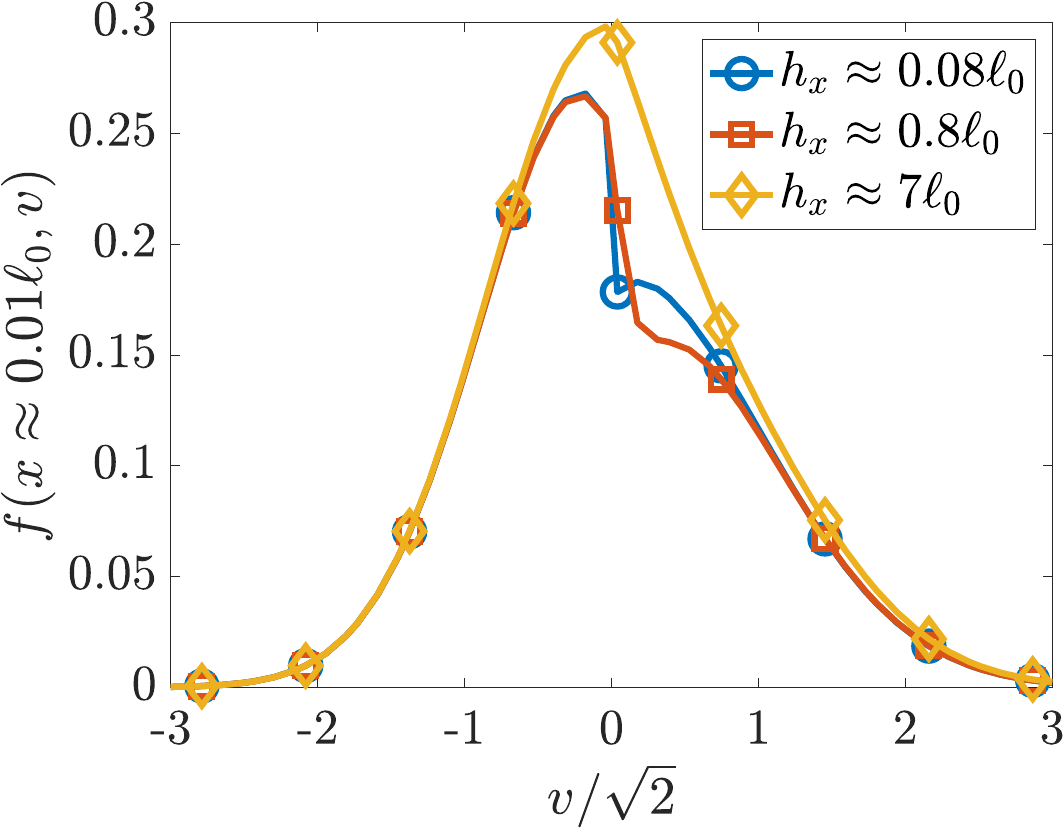}
        \caption{Distribution; $t=0.025\approx 2.83t_0$}
        \label{fig:suddenheat:boundary_layer_compression:dist:t=0.025}
    \end{subfigure}
    \hspace{0.1\textwidth}
    \begin{subfigure}{0.4\textwidth}
        \includegraphics[width=\textwidth, height=0.7\textwidth]{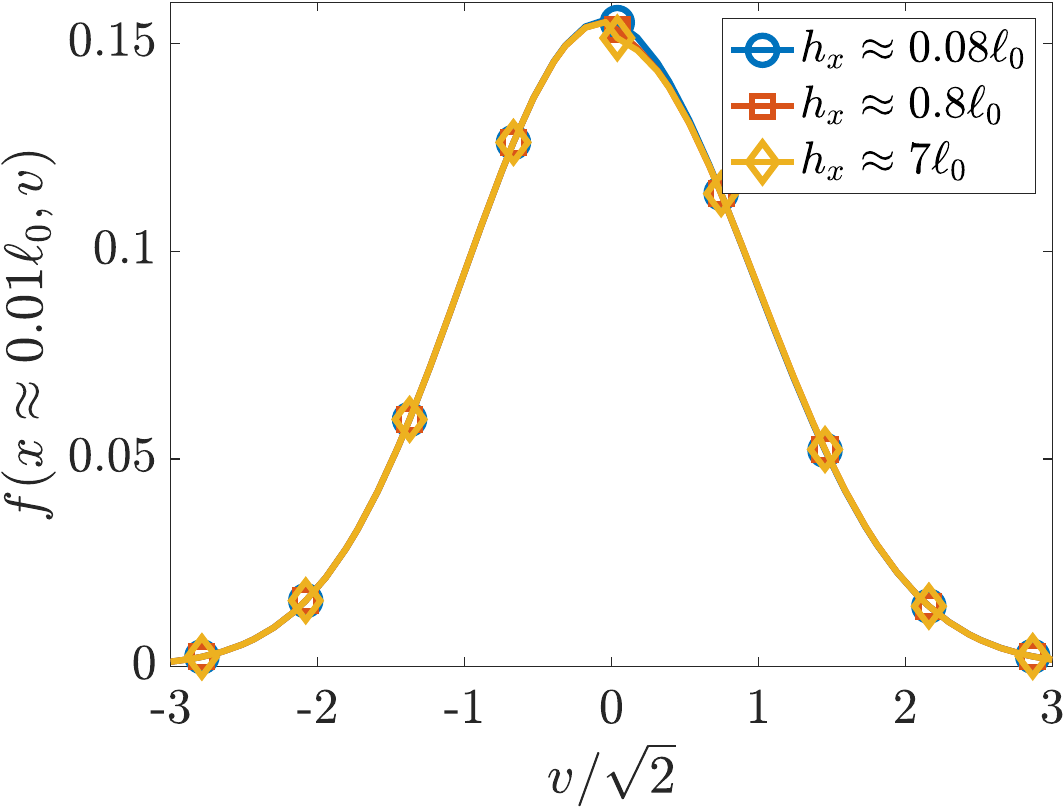}
        \caption{Distribution; $t=1\approx 113t_0$}
        \label{fig:suddenheat:boundary_layer_compression:dist:t=1}
    \end{subfigure}

    \begin{subfigure}{0.40\textwidth}
         \includegraphics[width=\textwidth, height=0.7\textwidth]{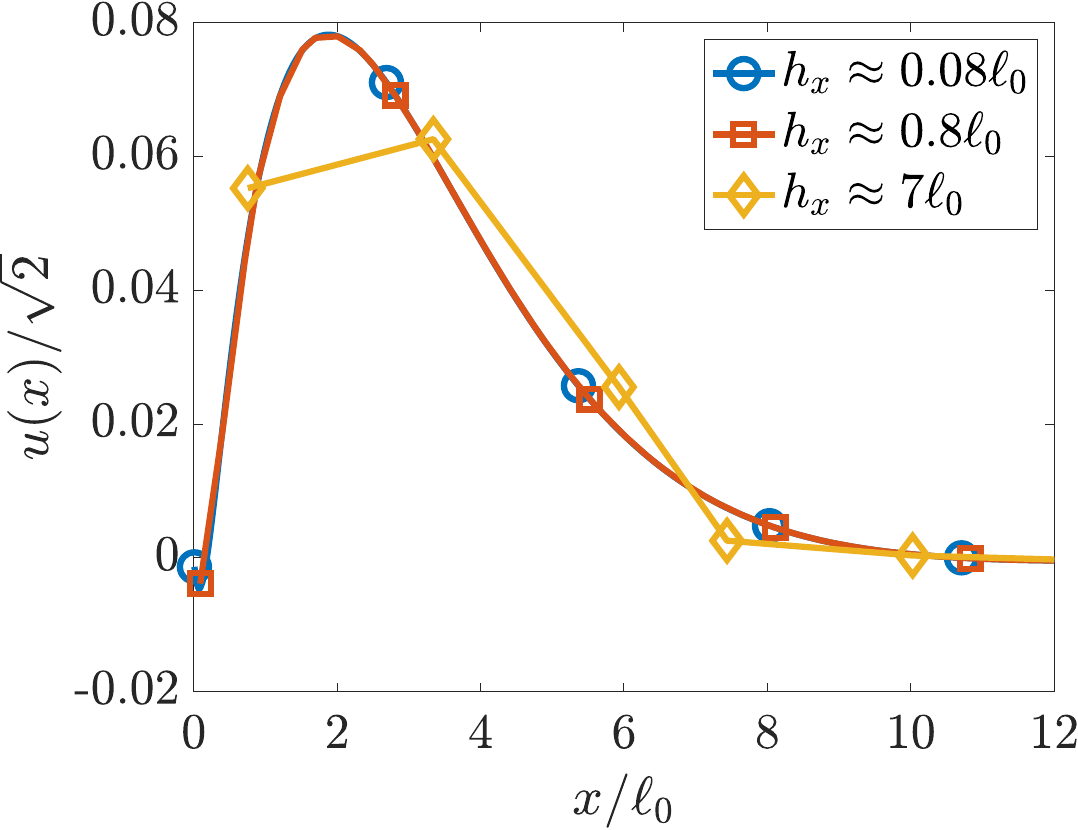}
        \caption{Bulk Velocity;  $t=0.025\approx 2.83t_0$}
        \label{fig:suddenheat:boundary_layer_compression:fluid:t=0.025}
    \end{subfigure}
    \hspace{0.1\textwidth}
    \begin{subfigure}{0.40\textwidth}
         \includegraphics[width=\textwidth, height=0.7\textwidth]{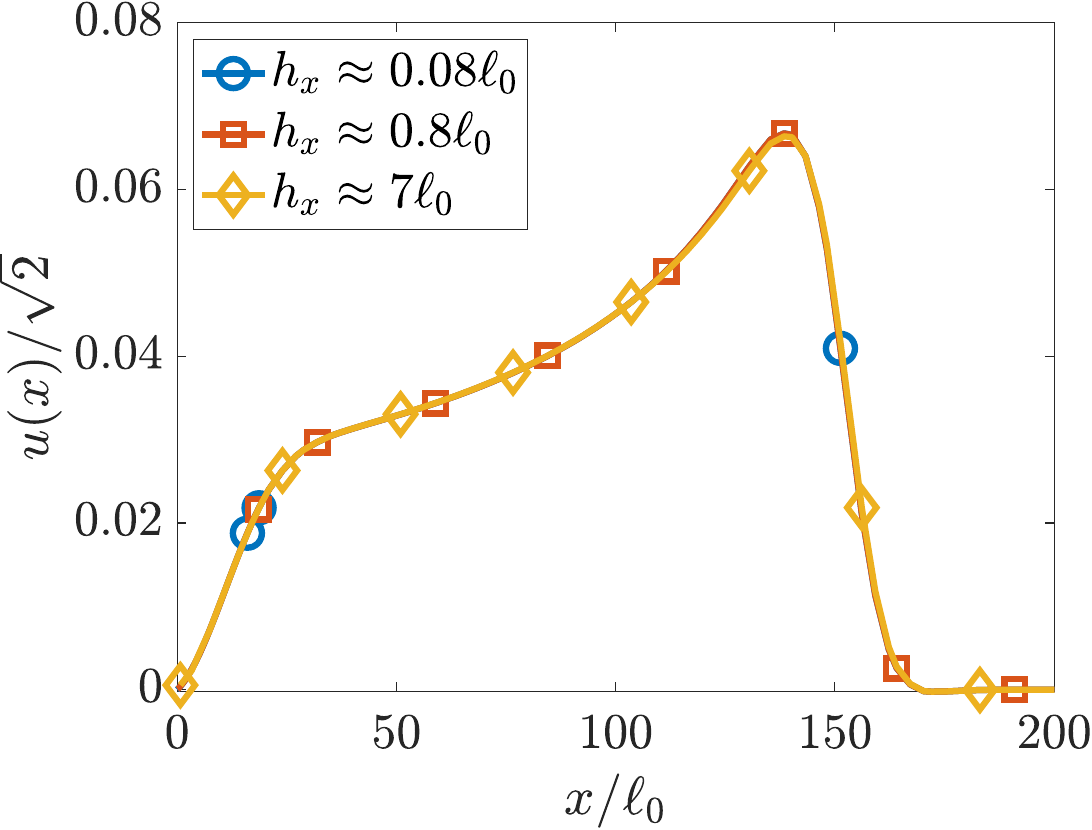}
        \caption{Bulk Velocity; $t=1\approx 113t_0$}
        \label{fig:suddenheat:boundary_layer_compression:fluid:t=1}
    \end{subfigure}
    \caption{
    Sudden wall heating (\Cref{subsec:suddenheat}): Plots to compare effects of the boundary layer and moments for varying resolution at the wall.
    Left: Slice of the distribution in velocity at $x\approx 0.01\ell_0$.
    Right: Bulk Velocity.
    }
    \label{fig:suddenheat:boundary_layer_compression}
\end{figure}

The results of \Cref{fig:suddenheat:boundary_layer_compression} suggest that for a short time a fine resolution must be taken at the left wall in order to capture the transition layer from kinetic to fluid regimes.  
For explicit and implicit-explicit (IMEX) integrators, the fine spatial resolution leads to a restrictive timestep (see \eqref{eqn:dt_expl}) that might not be needed for accuracy.  
To illustrate this fact, we apply the SI method with DIRK3 time-stepping, $N_{x,1} = 250$, and three different timesteps $\dt\in\{2.5\times 10^{-2},5.0\times 10^{-3},1.0\times 10^{-3}\}$.
We compare these three runs to a finite volume code \cite{habbershaw2022progress} that is second-order in space with a uniform discretization of $h_x = 10^{-3}$ from $(0,3)$ and second-order in velocity with $N_v=96$.
This finite volume method uses a third-order IMEX method where the collision operator and transport operator are respectively treated implicitly in four stages and explicitly in three stages. 
A timestep of $\dt = 5.625\times 10^{-5}$ is selected which is $0.9$ times the maximum explicit timestep for the second-order finite volume method with $h_x=10^{-3}$.

In \Cref{fig:suddenheat:FV_comp} we plot the velocity and temperature profiles for these four runs at $t=0.025$.
As we decrease $\dt$ for the SI runs, we approach the IMEX solution.
At $\dt = 10^{-3}$, the termination criterion \eqref{eqn:stopping_criterion_SI} is reached at 4 iterations per stage.
Therefore, while SI requires in every timestep four transport solves as opposed to one in IMEX\footnote{We assume the transport solve of SI and the transport evaluation of IMEX are comparable operations.}, SI gives a comparable solution with a 16 times larger timestep.

\begin{figure}
    \centering
    \begin{subfigure}{0.45\textwidth}
       \includegraphics[width=\textwidth]{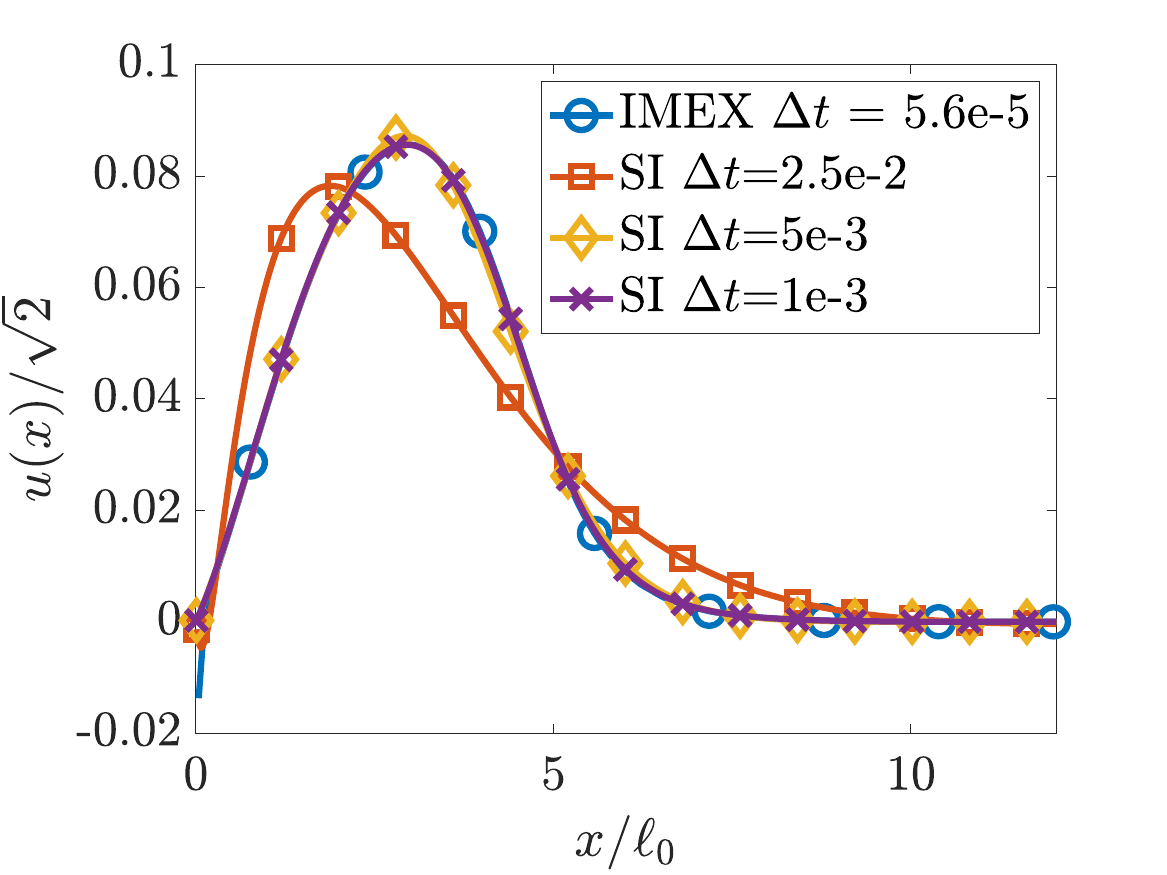}
       \caption{Velocity}
       \label{fig:suddenheat:FV_comp:velocity}
    \end{subfigure}
    \begin{subfigure}{0.45\textwidth}
       \includegraphics[width=\textwidth]{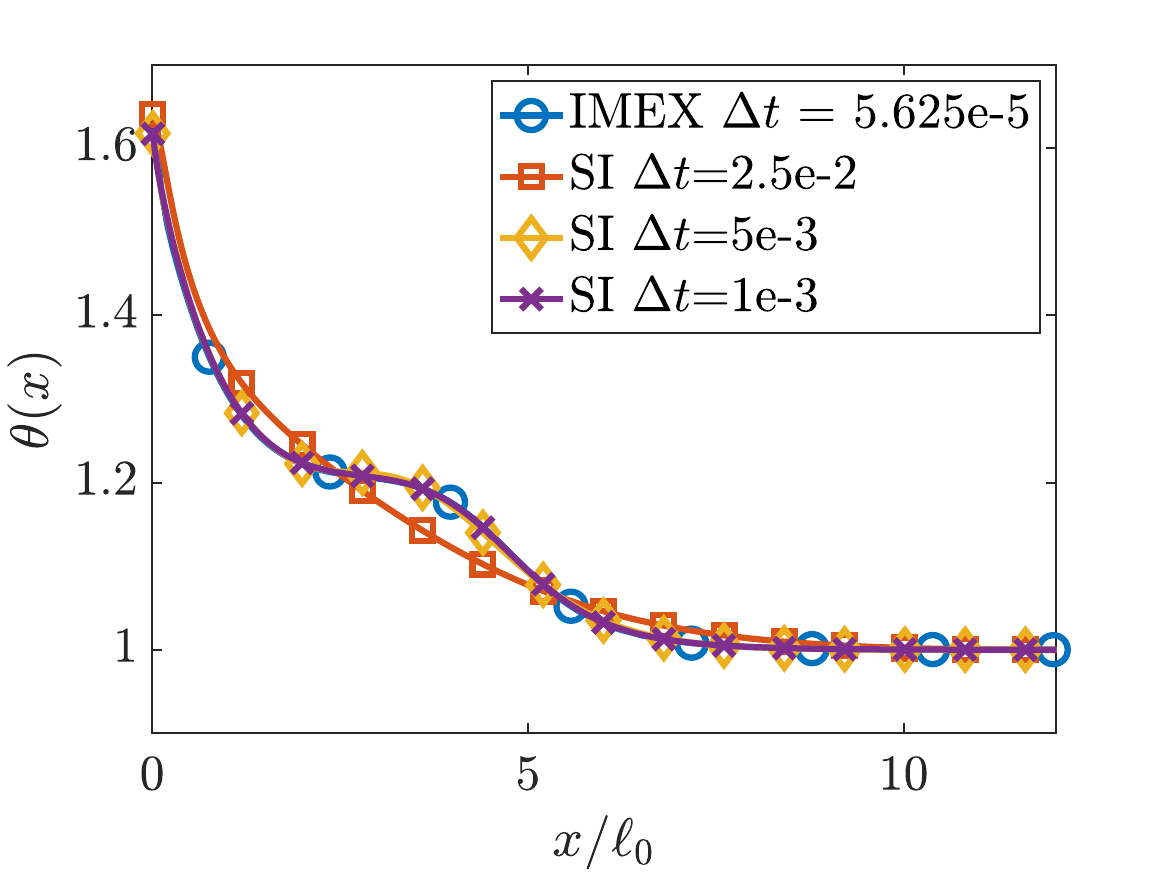}
       \caption{Temperature}
       \label{fig:suddenheat:FV_comp:temp}
    \end{subfigure}
    \caption{
    Sudden wall heating (\Cref{subsec:suddenheat}): Plots of the bulk velocity and temperature at $t=0.025$ of source iteration for various timesteps versus an IMEX finite volume code.
    All runs have a resolution of $h_x=10^{-3}$ for $0<x<0.25\approx 20 \ell_0$.
    Fully implicit methods allow us to choose a timestep based on accuracy rather than stability.
    }
    \label{fig:suddenheat:FV_comp}
\end{figure}

\subsubsection{Comparison of iteration counts}
We apply the SI, HOLO, and MM methods using $N_{x,1}\in\{3,25,250\}$ at the boundary layer.   
The iteration counts at the first timestep for $\dt\in\{2.5\times10^{-2},5.0\times 10^{-3},1.0\times 10^{-3}\}$ are given in \Cref{tab:bndy:SI_vs_HOLO_vs_HOLO-AV_iters}, 
The ratio between these timesteps and the explicit timestep restriction \eqref{eqn:dt_expl} ranges from $0.48$ to $1000$.
The convergence of SI is consistent with the analysis of the linear problem, see \Cref{prop:lin_bgk:SI_error}, in that the convergence rate is independent of $N_{x,1}$ and improves over vanishing $\dt$.
Overall, HOLO and MM-HOLO require fewer iterations to converge; the only exception is when $\dt=2.5\times 10^{-2}$ and $N_{x,1}=250$, where $\dt=1000\dt_\text{expl}$ and both HOLO and MM-HOLO fail to converge.
We attribute this failure to the stiffness from the lagged Euler flux in both methods, which, as shown in \Cref{prop:lin_bgk:HOLO_error}, persists in the linear case.
However; this issue only arises when $\dt/\dt_{\text{expl}}$ is very large.

While only the iterations for the first timestep are given in \Cref{tab:bndy:SI_vs_HOLO_vs_HOLO-AV_iters}, the HOLO and MM-HOLO methods improve once the boundary layer vanishes and the shock moves into the interior.
For example at $t=1.5$, $N_{x,1}=25$, and $\dt=2.5\times 10^{-2}$, the HOLO, MM-HOLO, and SI methods require 9, 11, and 40 iterations respectively.
Therefore, \Cref{tab:bndy:SI_vs_HOLO_vs_HOLO-AV_iters} is a conservative estimation of the benefits of HOLO and MM-HOLO.

\begin{table}[]
    \centering
    {\setlength\tabcolsep{5 pt}
    \begin{tabular}{c c|| c c c | c c c | c c c}
          & & \multicolumn{3}{c|}{SI} & \multicolumn{3}{c|}{HOLO} & \multicolumn{3}{c}{MM-HOLO} \\
          & $N_{x,1}$ & 3 & 25 & 250 & 3 & 25 & 250 & 3 & 25 & 250  \\ \hline
          \multirow{3}{*}{ \parbox{1.5cm}{\centering $\dt=2.5\times 10^{-2}$ }\vspace{1.1em}}
          & $\dt/\dt_{\text{expl}}$ & 12 & 100 & 1000 & 12 & 100 & 1000 & 12 & 100 & 1000 \\
          & Iterations & 46 & 46 & 46 & 16 & 35 & DNC & 16 & 35 & DNC \\ \hline
          \multirow{3}{*}{ \parbox{1.5cm}{\centering $\dt=5.0\times 10^{-3}$ }\vspace{1.1em}} 
          & $\dt/\dt_{\text{expl}}$ & 2.4 & 25 & 250 & 2.4 & 25 & 250 & 2.4 & 25 & 250 \\
          & Iterations & 18 & 18 & 18 & 12 & 13 & 17 & 12 & 13 & 17 \\ \hline
          \multirow{3}{*}{ \parbox{1.5cm}{\centering $\dt=1.0\times 10^{-3}$ }\vspace{1.1em}} 
          & $\dt/\dt_{\text{expl}}$ & 0.48 & 5 & 50 & 0.48 & 5 & 50 & 0.48 & 5 & 50 \\
          & Iterations & 12 & 12 & 12 & 8 & 11 & 11 & 8 & 12 & 12 \\ \hline
    \end{tabular}
    }
    \caption{
    Sudden wall heating (\Cref{subsec:suddenheat}): Total number of iterations of the first DIRK3 timestep with SI \eqref{eqn:BE_SI}, HOLO \eqref{eqn:BE_HOLO}, and MM-HOLO \eqref{eqn:BE_MM}.
    Here $N_{x,1}$ refers to a uniform discretization of the interval $(0,0.25)$, and $\dt_{\text{expl}}$ is defined in \eqref{eqn:dt_expl}.
    DNC denotes ``did not converge''.
    The HOLO and MM-HOLO methods perform better than SI until the timestep is several orders of magnitude over the explicit timestep restriction.
    }
    \label{tab:bndy:SI_vs_HOLO_vs_HOLO-AV_iters} 
\end{table}

\subsubsection{Compression benefits of the MM-HOLO method}  

Set $\dt=2.5\times 10^{-2}$.
We perform the same coarse-velocity compression analysis as in \Cref{subsec:riemann} with $N_{x,1}=25$.  
The reference solution is determined to be an average of the MM-HOLO and HOLO methods with $N_v=120$.
These two methods are then compared with $N_v=\{4,6,8,10,12,20\}$. 
The relative errors in the fluid variables are shown in \Cref{fig:suddenheat:HOLO_vs_MM_compression} for $t=0.2$ and $t=2$. 
Unlike the Sod shock tube, the MM-HOLO method does not show favorable improvement when compared to HOLO;
in fact, for $N_v=10$ and $12$, the HOLO method is slightly more accurate.
We attribute this behavior to the boundary layer, which remains out of equilibrium even if $\nu \gg 1$.
To see this, we plot $g$ for the MM-HOLO method with $N_v=120$ in \Cref{fig:suddenheat:HOLO_vs_MM_compression:g}, which shows that the boundary layer is the primary contribution of $g$.
Therefore, the micro distribution $g$ becomes the primary component to capture in order to reduce to error further, and we conjecture that resolving $g$ is as hard of a problem as capturing the kinetic distribution $f$ and possibly harder since the $g$ is more oscillatory in velocity.

\begin{figure}
    \centering
    \begin{subfigure}{0.40\textwidth}
        \includegraphics[width=\textwidth]{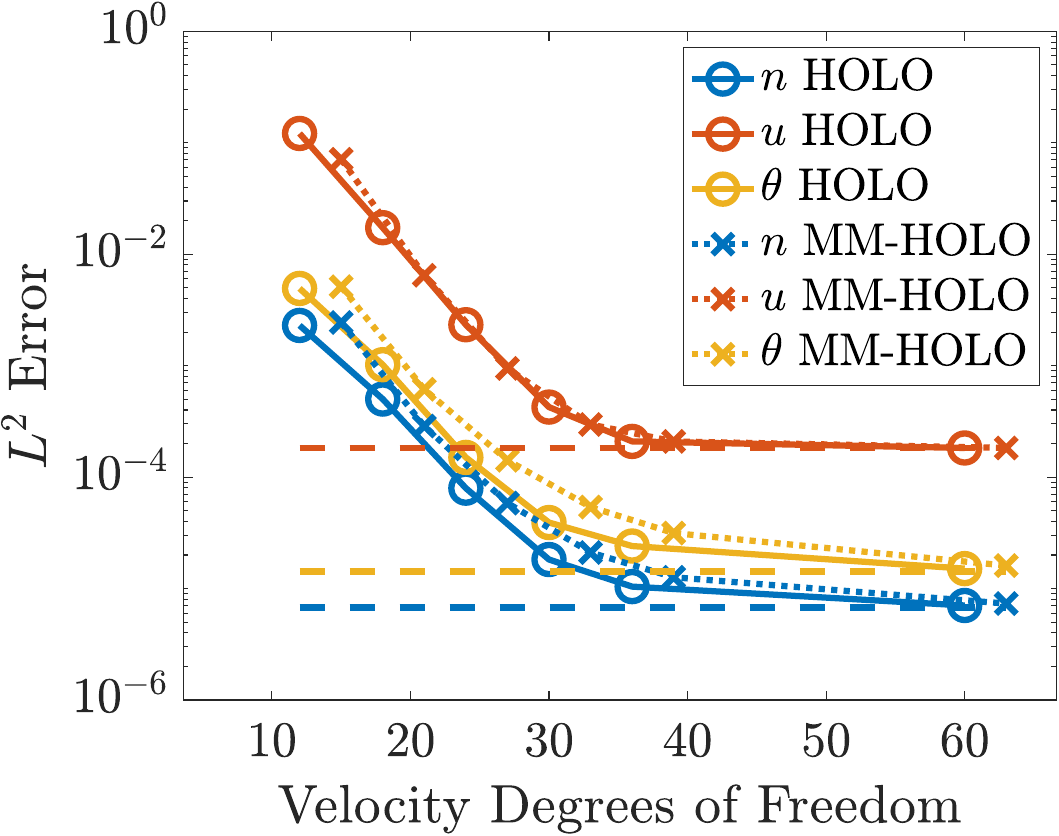}
        \caption{$t=0.2\approx 22t_0$}
        \label{fig:suddenheat:HOLO_vs_MM_compression:t_0p2}
    \end{subfigure}
    \hspace{20pt}
    \begin{subfigure}{0.40\textwidth}
        \includegraphics[width=\textwidth]{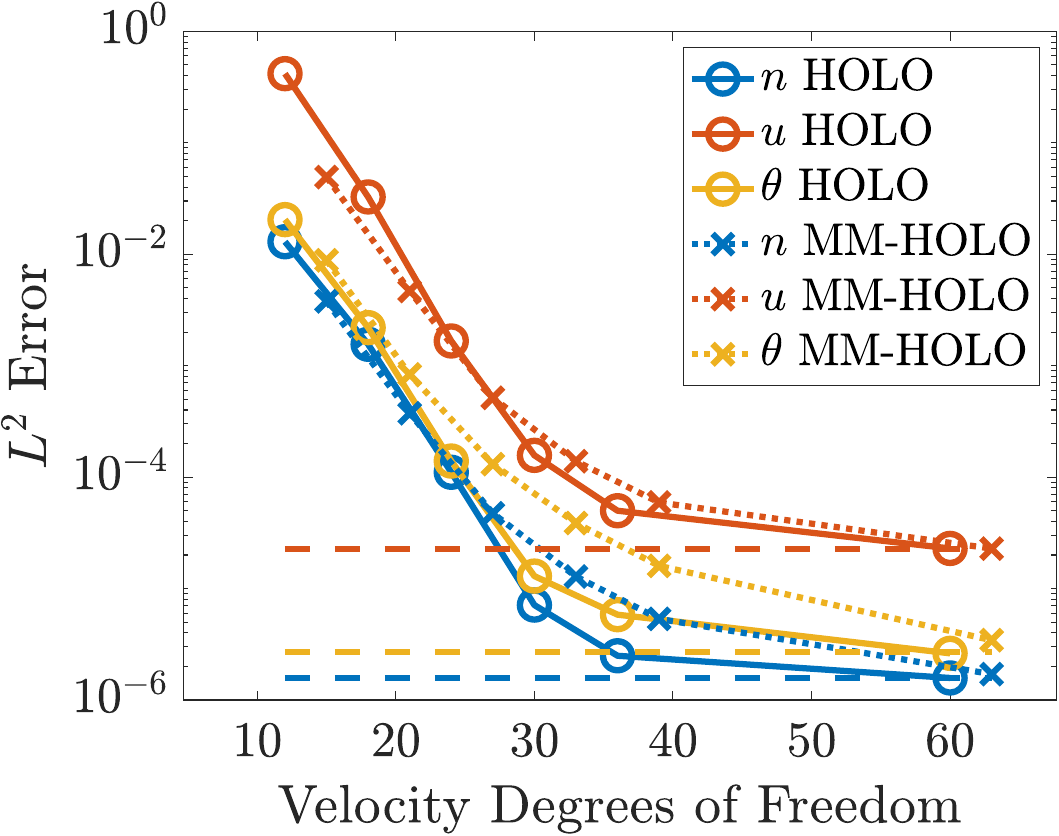}
        \caption{$t=2\approx 220t_0$}
        \label{fig:suddenheat:HOLO_vs_MM_compression:t_2}
    \end{subfigure}

    \caption{
    Sudden wall heating (\Cref{subsec:suddenheat}): Relative $L^2$ error of the fluid variables of the HOLO and MM-HOLO methods plotted against the velocity degrees of freedom.
    We set $N_{x,1}=25$  and $N_v\in\{4,6,8,10,12,20\}$.
    The reference solution is defined to be the average of the MM-HOLO and HOLO solutions with $N_v=120$.
    The dashed lines represent the saturation point, which is defined to be half the relative error between the MM-HOLO and HOLO solutions used to create the reference.
    }
    \label{fig:suddenheat:HOLO_vs_MM_compression}
\end{figure}
\begin{figure}
    \centering
    \begin{subfigure}{0.40\textwidth}        \includegraphics[width=\textwidth]{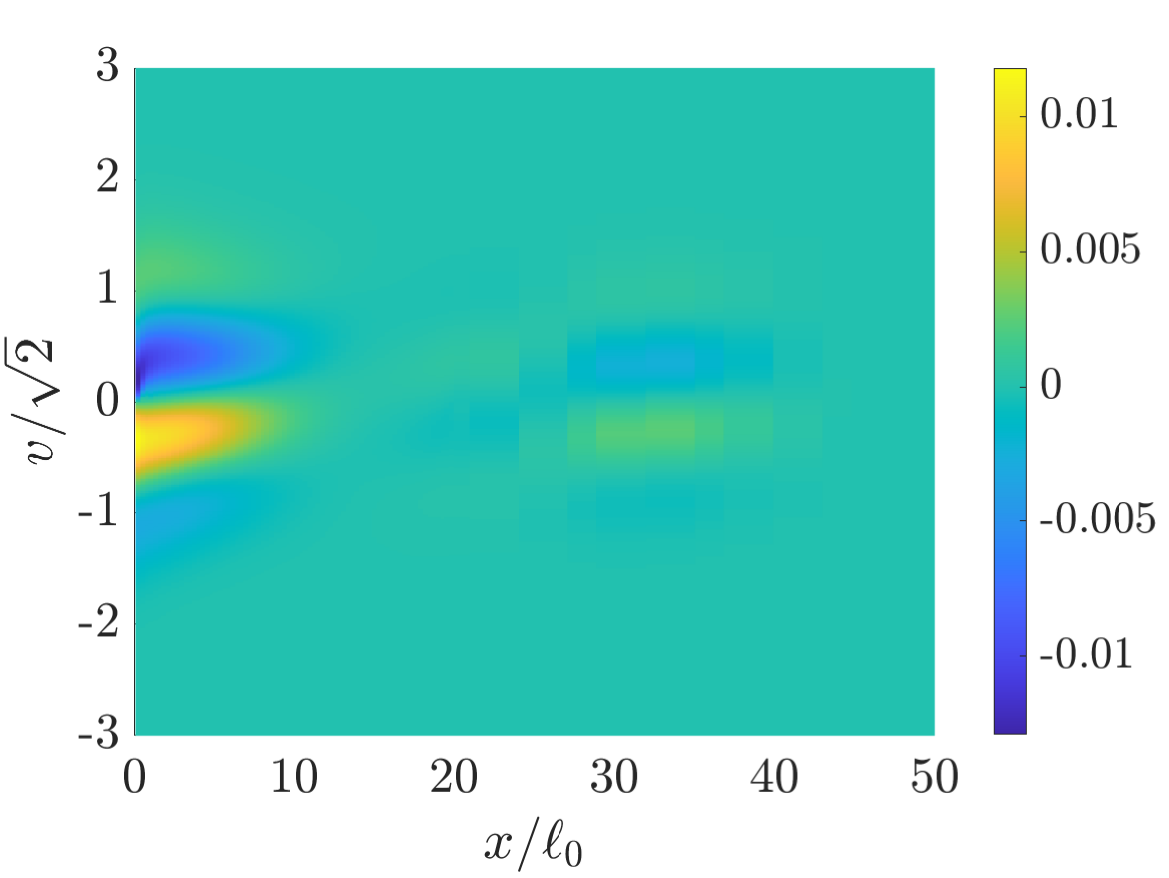}
        \caption{$t=0.2\approx 22t_0$}
        \label{fig:suddenheat:HOLO_vs_MM_compression:g:t_0p2}
    \end{subfigure}
    \hspace{25pt}
    \begin{subfigure}{0.40\textwidth}
        \includegraphics[width=\textwidth]{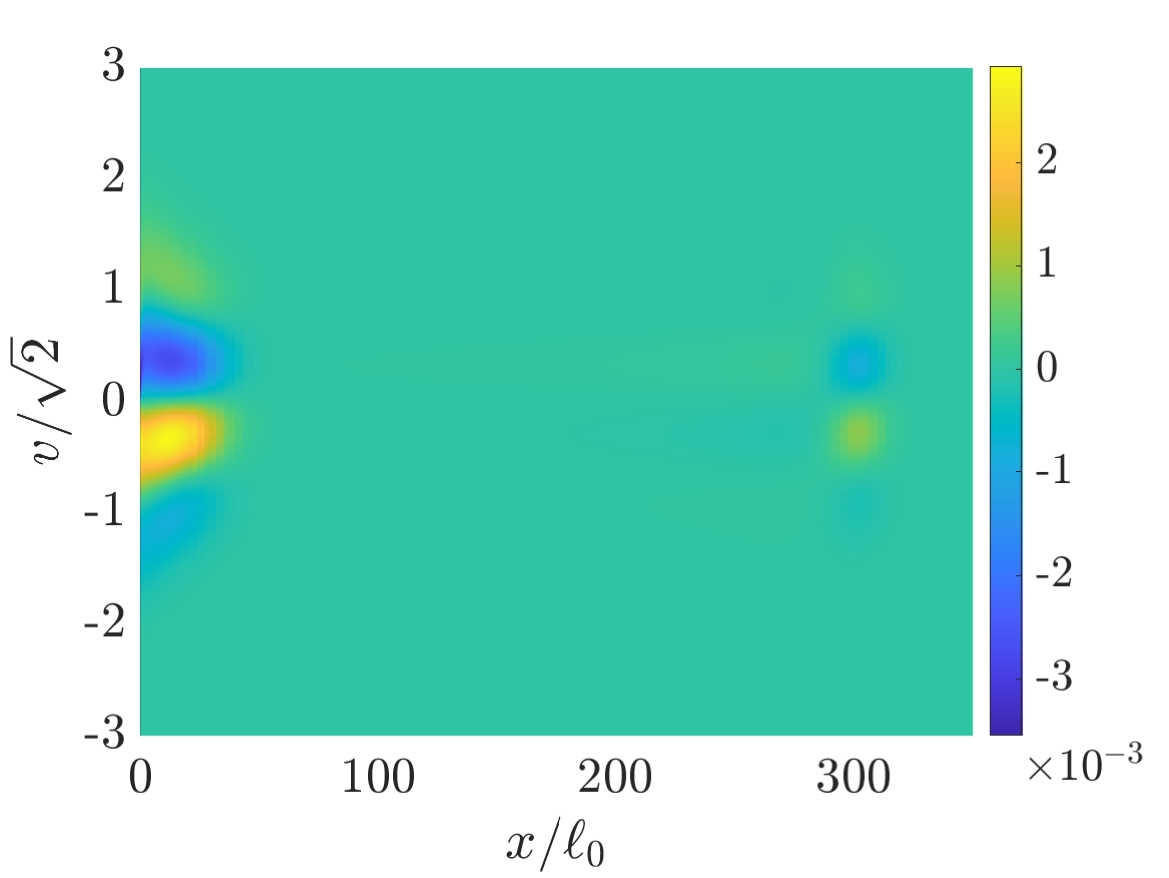}
        \caption{$t=2\approx 220t_0$}
        \label{fig:suddenheat:HOLO_vs_MM_compression:g:t_2}
    \end{subfigure}

    \caption{
    Sudden wall heating (\Cref{subsec:suddenheat}): Plots of the micro distribution $g$ for the MM-HOLO method with $N_{x,1}=25$ and $N_v=120$.
    The largest contributions $g$ arise from the boundary layer and the propagation of the shock into the interior of the domain.  
    }
    \label{fig:suddenheat:HOLO_vs_MM_compression:g}
\end{figure}

Additionally, we perform a low-rank compression test similar to the Sod shock tube on the HOLO and MM-HOLO methods.
We let $N_v=120$ which sets $\dof_x=3(N_{x,1}+N_{x,2}) = 249$ and $\dof_v = 3N_v = 360$.  
In \Cref{fig:suddenheat:HOLO_vs_MM_compression:LR}, we plot the compression factor of the low-rank matrix versus the error of the approximation for $t=0.2$ and $t=2$.  
The plots show that for both times, the MM-HOLO approximation is only marginally more efficient than HOLO, and both methods saturate at the same compression factor.
Thus in this case, the MM-HOLO method does not offer superior compression saving versus the more traditional approach. 
We conjecture the similarity in compression between these two methods is again caused by the boundary layer which drives the dominant portion of non-equilibrium behavior.

\begin{figure}
    \centering
    \begin{subfigure}{0.40\textwidth}
        \includegraphics[width=\textwidth]{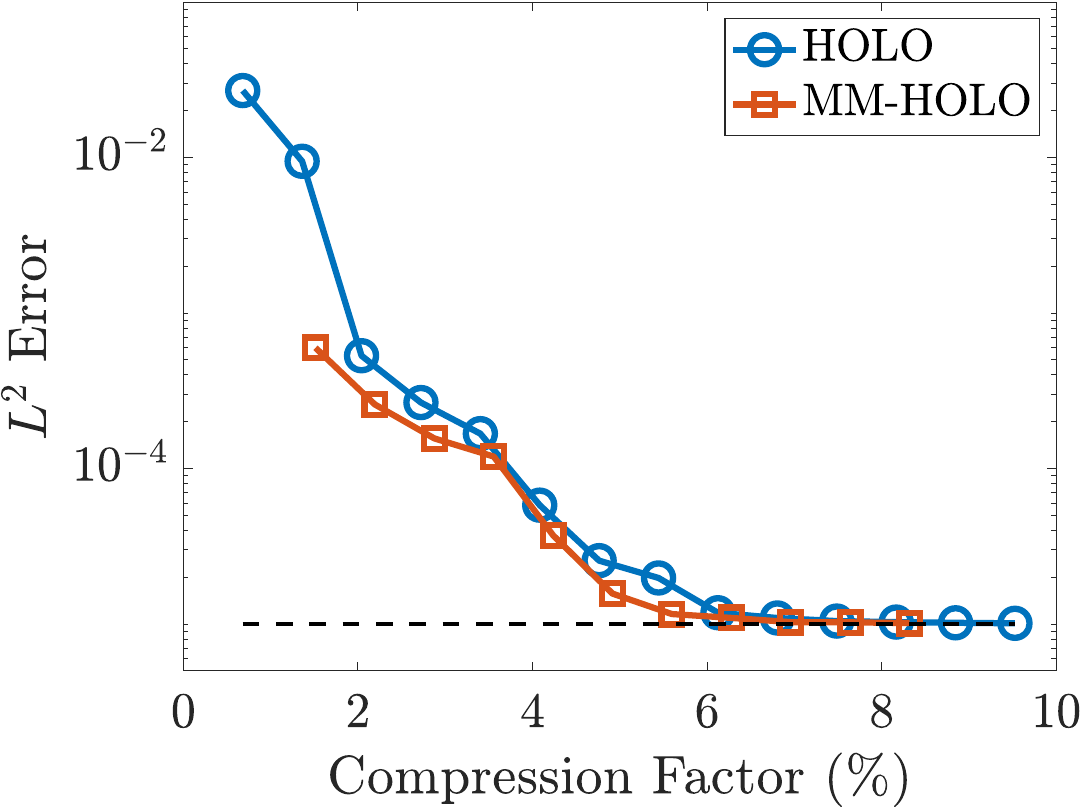}
        \caption{$t=0.2\approx 22t_0$}
        \label{fig:suddenheat:HOLO_vs_MM_compression:LR:t_0p2}
    \end{subfigure}
    \hspace{20pt}
    \begin{subfigure}{0.40\textwidth}
        \includegraphics[width=\textwidth]{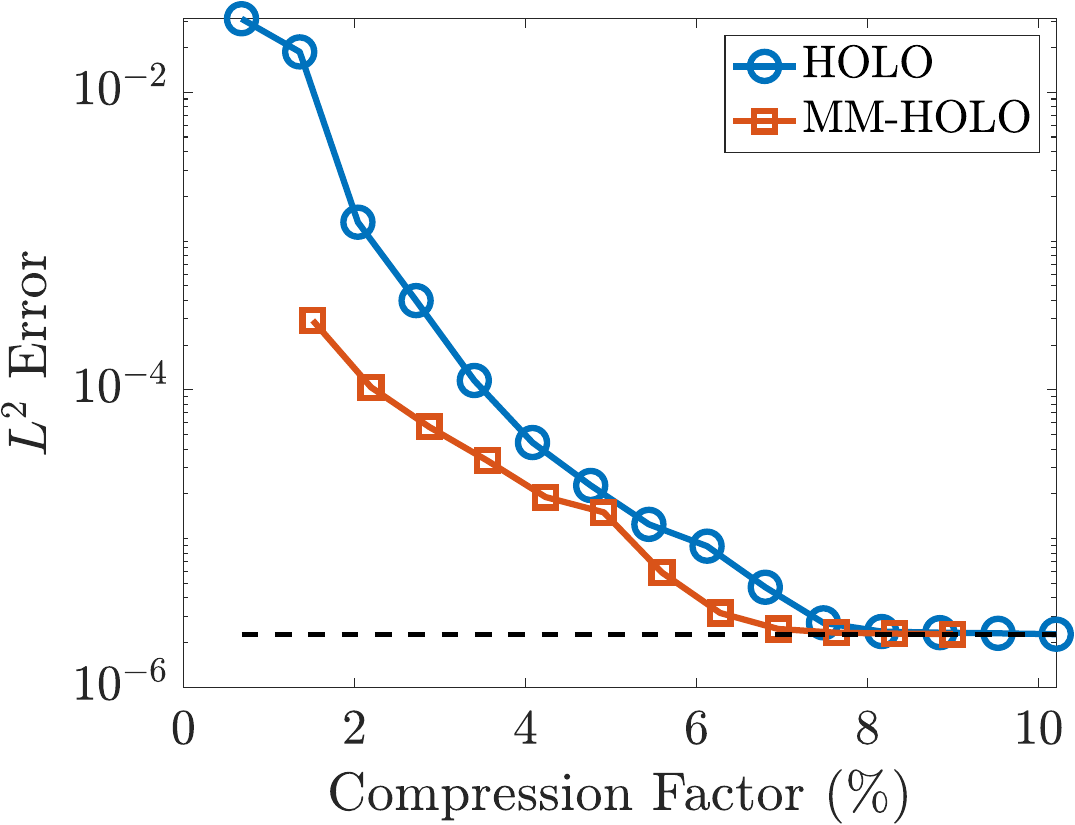}
        \caption{$t=2\approx 220t_0$}
        \label{fig:suddenheat:HOLO_vs_MM_compression:LR:t_2}
    \end{subfigure}

    \caption{
    Sudden wall heating (\Cref{subsec:suddenheat}): Relative $L^2$ error of the low-rank compressed distribution against the reference.
    The reference is defined as the average of the MM-HOLO and HOLO methods with $N_x = 25$ at the boundary layer and $N_v=120$; this produces a coefficient matrix of size $249\times 360$.
    The compression factors are given for the HOLO and MM-HOLO methods in \eqref{eqn:riemann:compression_factor:HOLO} and \eqref{eqn:riemann:compression_factor:MM} respectively.
    Compression of the micro distribution $g$ is slightly better than compression of the kinetic distribution $f$, but both the MM-HOLO and HOLO methods saturate at similar compression factors.
    }
    \label{fig:suddenheat:HOLO_vs_MM_compression:LR}
\end{figure}

\section{Conclusions}\label{sect:conclusions} 
In this work, we have developed a micro-macro decomposition for implicit temporal discretizations of the BGK model.
We have showed through analysis and implementation that the MM-HOLO method retains the acceleration properties of HOLO while allowing compression of the solution when near equilibrium.
Additionally, we have provided theory and examples that show the convergence rates of SI and HOLO and consistency between them.

Resolving the lack of convergence from the HOLO and MM-HOLO methods for large $\dt/h_x$ is an important future topic and could be achieved by utilizing more accurate fluid solvers with dissipation, e.g., Navier-Stokes approximations following the GSIS approach \cite{zeng2023general,su2020can,su2020fast}, or combining the SI and HOLO methods in certain areas of the domain.
Additional research directions include: (1) determining the computational benefits of HOLO and MM-HOLO on higher-dimensional problems with unstructured grids in position space;
(2) analysis of the method on more accurate collision operations that produce the correct Prandtl number, such as the ES-BGK \cite{holway1965kinetic} and Shakhov \cite{shakhov1968generalization} models;
and (3) analysis of the method on multispecies BGK models \cite{haack2017conservative,habbershaw2025asymptotic}.

\section{Acknowledgements}

We would like to thank Evan Habbershaw for using his finite volume code \cite{habbershaw2022progress} in comparison of our method.

\bibliographystyle{abbrv}
\bibliography{references}

%%%%%%%%%%%%%%%%%%%%%%%%%%%%%%%%%%%%%%%%%%%%%%%%%%%
%%%%%%% Appendix
%%%%%%%%%%%%%%%%%%%%%%%%%%%%%%%%%%%%%%%%%%%%%%%%%%%

\appendix

\section{Analysis of a linearized BGK model that preserves mass, momentum, and energy} \label{sec:appendix_bgk}

In this section we perform the same analysis of the SI and HOLO methods in \Cref{sec:analysis} but using a linearized BGK operator that preserves all three conservation invariants.  
We delay the following analysis to the appendix since the analysis of the linear model in \Cref{sec:analysis} is simpler and easier to follow, but the following model is more physically relevant and therefore justified.
\subsection{The linearized BGK model}
For fixed $u_0\in\mathbb{R}$ and $\theta_0>0$, define $\mpM(v)=\frac{1}{\sqrt{2\pi\theta_0}}\exp(-\frac{(v-u_0)^2}{2\theta_0})$.  From \cite[(2.5)]{xiong2015high}, the linearization of the BGK model around $\mpM$ is 
\begin{equation}\label{eqn:linearized-bgk:linearized-model}
    \partial_t f + v\partial_x f = \nu(\mN(\bmrho_f)-f),
\end{equation}
where
\begin{align}
    \mN(\bmrho) = \mpM\Big(&\big(\tfrac{u_0^2(v-u_0)^2}{2\theta_0^2}-\tfrac{v^2}{2\theta_0}+\tfrac{3}{2}\big)\rho_0 +\big(\tfrac{-u_0(v-u_0)^2}{\theta_0^2}+\tfrac{v}{\theta_0}\big)\rho_1 + \big(\tfrac{(v-u_0)^2}{\theta_0^2}-\tfrac{1}{\theta_0}\big)\rho_2\Big).
\end{align}
The linearized BGK operator has the same conservation invariants as the nonlinear BGK operator; namely, $\vbr{\bme(\mN(\bmrho_f)-f)}=0$.
Moreover, as $\nu\to\infty$, formally $f\to \mN(\bmrho_f)$ where $\bmrho_f$ satisfies
\begin{equation}\label{eqn:linearized-bgk:drift}
    \partial_t\bmrho_f + \partial_x B\bmrho_f = 0,
\end{equation}
with
\begin{align}
    B\bmrho := \vbr{\bme v\mN(\bmrho)} = 
    \begin{bmatrix}
        0 & 1 & 0 \\
        0 & 0 & 2 \\
        \tfrac{1}{2}(u_0^2-3\theta_0)u_0 & -\tfrac{3}{2}(u_0^2-\theta_0) & 3u_0 
    \end{bmatrix}
    \bmrho.
\end{align}
The matrix $B$ is diagonalizable with real eigenvalues
\begin{equation}
    \lambda_0 = u_0, 
    \quad
    \lambda_1 = u_0+\sqrt{3\theta_0},
    \quad \text{and} \quad \lambda_2=u_0-\sqrt{3\theta_0}.
\end{equation}
Hence \eqref{eqn:linearized-bgk:drift} is a hyperbolic system. 
Let $B=P\Lambda P^{-1}$ where $\Lambda = \text{diag}(\lambda_0,\lambda_1,\lambda_2)$ and
\begin{align}
\label{eq:P}
    P = 
    \begin{bmatrix}
        2 & 2 & 2 \\
        2\lambda_0 & 2\lambda_1 & 2\lambda_2 \\
        \lambda_0^2 & \lambda_1^2 & \lambda_2^2 
    \end{bmatrix}.
\end{align}
We make the same boundary condition and discretization assumptions as in \Cref{sec:analysis}; see \Cref{rmk:linear_bgk_disc}.
The backward Euler and spatial discretization  of \eqref{eqn:linearized-bgk:linearized-model} is then given by
\begin{equation}\label{eqn:linearized_bgk:BE}
    f^{\{k+1\}} + \dt vAf^{\{k+1\}} + \nu\dt f^{\{k+1\}} = f^{\{k\}} + \nu\dt \mN(\bmrho_{f^{\{k+1\}}}).
\end{equation}
where $f^{\{k\}}\approx f_h(t^k)$ and $A:V_{x,h}\to V_{x,h}$ is a skew-symmetric operator in $L^2(\W_x)$ with purely imaginary eigenvalues $\gamma i$ such that $|\gamma|\leq h_x^{-1}$.

Let $P_{\mpM}:L^2(\Omega_v)\to\text{span}(\{\mpM,v\mpM,v^2\mpM\})$ be given by $P_{\mpM}f=\mN(\bmrho_f)$.
Then $P_\mpM$ is an orthogonal projection with respect to the $\mpM^{-1}$ inner product $\Mip{w}{z} := \vbr{wz\mpM^{-1}}$ and can be extended to an orthogonal projection in $L^2(\W)$ with respect to the inner product $(w,z)_{\mpM}:=(w,z\mpM^{-1})$.
Define $P_\mpM^\perp = I-P_\mpM$.  

\subsection{Convergence analysis}

Source iteration for \eqref{eqn:linearized_bgk:BE} reads: Given $f^\ell$, find $f^{\ell+1}$ such that 
\begin{equation}\label{eqn:linearized_bgk:SI}
    f^{\ell+1} + \dt vAf^{\ell+1} + \nu\dt f^{\ell+1} = f^{\{k\}} + \nu\dt\mN(\bmrho_{f^{\ell}}).
\end{equation}
The convergence theory for SI remains the same as in \Cref{subsec:lin_bgk:SI_theory}; namely, \Cref{prop:lin_bgk:SI_error} symbolically holds for \eqref{eqn:linearized_bgk:SI}.

The HOLO method reads: Given $f^\ell$, find $f^{\ell+1}$ such that
\begin{subequations}\label{eqn:linearized_bgk:HOLO}
\begin{align}
    f^{\ell+1} + \Delta tvAf^{\ell+1} + \nu\dt f^{\ell+1} &= f^{\{k\}} + \nu\dt\mN(\bmrho^{\ell+1}), \\ 
    \bmrho^{\ell+1} + \dt A(B\bmrho^{\ell+1}) &= \bmrho_{f^{\{k\}}} - \dt A\vbr{\bme v(f^\ell-\mN(\bmrho_{f^{\ell}}))}. \label{eqn:linearized_bgk:lo}
\end{align}
\end{subequations}
Just as in \Cref{sec:analysis}, the linearity and lack of velocity discretization makes the HOLO and MM-HOLO methods equivalent.

\begin{prop}\label{prop:linearized-bgk:HOLO-est}
    Let $e_f^{\ell+1} = f^{\ell+1} - f^{\{k+1\}}$ where $f^{\ell+1}$ and $f^{\{k+1\}}$ are defined respectively in \eqref{eqn:linearized_bgk:HOLO} and \eqref{eqn:linearized_bgk:BE}.
    Then
    \begin{equation}\label{prop:linearized-bgk:HOLO-est:0}
        \|P_\mpM^\perp e_f^{\ell+1}\|_\mpM^2 \leq \tfrac{1}{4}C_{\mathrm{HL}}\tfrac{\nu\dt}{1+\nu\dt}\|P_\mpM^\perp e_f^{\ell}\|_\mpM^2,
    \end{equation}
    where $C_{\mathrm{HL}} = C_0 + C_1 + C_2$, with
    \begin{align}
         C_j = \max_{|\gamma|<\sfrac{\dt}{h_x}}\mathcal{J}_j(\gamma), \label{eqn:linearized-bgk:C_L}
\end{align}
and
\begin{subequations}
   \label{eqn:linearized-bgk:constants}
\begin{align}
        \mathcal{J}_0(\gamma) &= \tfrac{15\theta_0^3\gamma^{6}}{1+3(u_0^2+2\theta_0)\gamma^2+3(u_0^4+3\theta_0^2)\gamma^4+u_0^2(u_0+\sqrt{3\theta_0})^2(u_0-\sqrt{3\theta_0})^2\gamma^6}, \label{eqn:linearized-bgk:C_0} \\
        \mathcal{J}_1(\gamma) &= \tfrac{15\theta_0^2\gamma^{4}(1-u_0\gamma)^2}{1+3(u_0^2+2\theta_0)\gamma^2+3(u_0^4+3\theta_0^2)\gamma^4+u_0^2(u_0+\sqrt{3\theta_0})^2(u_0-\sqrt{3\theta_0})^2\gamma^6}, \\
        \mathcal{J}_2(\gamma) &= \tfrac{7.5\theta_0\gamma^{2}(1-2u_0\gamma+(u_0^2-\theta_0)\gamma^2)^2}{1+3(u_0^2+2\theta_0)\gamma^2+3(u_0^4+3\theta_0^2)\gamma^4+u_0^2(u_0+\sqrt{3\theta_0})^2(u_0-\sqrt{3\theta_0})^2\gamma^6}.
    \end{align}
    \end{subequations}
\end{prop}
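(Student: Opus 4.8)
\noindent The plan is to retrace the proof of \Cref{prop:lin_bgk:HOLO_error} with the scalar density $n$ replaced by the moment vector $\bmrho$, the advection constant $u_0$ replaced by the matrix $B$, and the Hermite factor $v-u_0$ replaced by the cubic $(v-u_0)^3-3\theta_0(v-u_0)$ associated with $\mpM$ (whose roots are exactly the eigenvalues $\lambda_0=u_0$, $\lambda_{1,2}=u_0\pm\sqrt{3\theta_0}$ of $B$, since these are the three Gauss nodes of $\mpM$). First I would subtract \eqref{eqn:linearized_bgk:BE} from the high-order step of \eqref{eqn:linearized_bgk:HOLO}; using linearity of $\mN$ this gives $(1+\nu\dt)e_f^{\ell+1}+\dt vAe_f^{\ell+1}=\nu\dt\,\mN(\bmmu)$ with $\bmmu:=\bmrho^{\ell+1}-\bmrho_{f^{\{k+1\}}}$. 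Testing against $e_f^{\ell+1}\mpM^{-1}$, using $(vAw,w)_\mpM=0$ (which follows from the skew-symmetry of $A$ at each fixed $v$), that $\mN(\bmmu)\in\mathrm{Range}(P_\mpM)$, and Young's inequality with weight $\delta=2$ exactly as in \eqref{eqn:lin_bgk:HOLO_error:3b}, I obtain $(1+\nu\dt)\|P_\mpM^\perp e_f^{\ell+1}\|_\mpM^2\le\tfrac{\nu\dt}{4}\|\mN(\bmmu)\|_\mpM^2$. Hence it suffices to establish the low-order estimate $\|\mN(\bmmu)\|_\mpM^2\le C_{\mathrm{HL}}\|P_\mpM^\perp e_f^{\ell}\|_\mpM^2$, the analogue of \eqref{eqn:lin_bgk:HOLO_error:7}.

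To bound the moment error I would take $\vbr{\bme\,\cdot\,}$ of \eqref{eqn:linearized_bgk:BE}: the $\nu\dt$ terms cancel, and writing $f^{\{k+1\}}=\mN(\bmrho_{f^{\{k+1\}}})+P_\mpM^\perp f^{\{k+1\}}$ inside $\vbr{\bme vf^{\{k+1\}}}$ gives $(I+\dt AB)\bmrho_{f^{\{k+1\}}}=\bmrho_{f^{\{k\}}}-\dt A\vbr{\bme v P_\mpM^\perp f^{\{k+1\}}}$; subtracting this from \eqref{eqn:linearized_bgk:lo} yields the error equation $(I+\dt AB)\bmmu=-\dt A\vbr{\bme v P_\mpM^\perp e_f^{\ell}}$. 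The key simplification is that $\vbr{\bme P_\mpM^\perp e_f^{\ell}}=0$, so the first two components of the source vanish and $\vbr{\bme v P_\mpM^\perp e_f^{\ell}}=q\,(0,0,1)^\top$ with $q:=\tfrac12\vbr{v^3P_\mpM^\perp e_f^{\ell}}=\tfrac12\vbr{\big((v-u_0)^3-3\theta_0(v-u_0)\big)P_\mpM^\perp e_f^{\ell}}$, the last equality because $P_\mpM^\perp e_f^{\ell}$ is $\vbr{\,\cdot\,}$-orthogonal to all polynomials of degree $\le 2$. A pointwise-in-$x$ Cauchy--Schwarz in $v$ together with $\vbr{\big((v-u_0)^3-3\theta_0(v-u_0)\big)^2\mpM}=6\theta_0^3$ then gives $\|q\|_{\W_x}^2\le\tfrac32\theta_0^3\|P_\mpM^\perp e_f^{\ell}\|_\mpM^2$, the vector-valued replacement for the factor $\theta_0$ in the scalar proof.

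The remaining step, and the main obstacle, is the $3\times3$ linear algebra that closes $(I+\dt AB)\bmmu=-\dt A\,q\,(0,0,1)^\top$ in the $\mpM$-weighted moment norm. I would diagonalize $B=P\Lambda P^{-1}$, which guarantees $I+\dt i\eta B$ is invertible for every real $\eta$, and use that $A$ is normal with eigenvalues $i\eta$, $|\eta|\le h_x^{-1}$: restricting the error equation to each eigenspace of $A$ and setting $\gamma:=\dt\eta$ gives $\widehat{\bmmu}_\eta=-i\gamma\,\widehat q_\eta\,(I+i\gamma B)^{-1}(0,0,1)^\top$, where the third column of $(I+i\gamma B)^{-1}$ is its adjugate column divided by $\det(I+i\gamma B)=\prod_{j=0}^{2}(1+i\gamma\lambda_j)$, so $|\det(I+i\gamma B)|^2=\prod_{j=0}^{2}(1+\lambda_j^2\gamma^2)$ is precisely the common denominator of the $\mathcal J_j$. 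Expanding $\mN(\bmmu)$ in the $\mpM$-orthonormal Hermite basis expresses $\|\mN(\bmmu)\|_\mpM^2$ as the sum of the squared moduli of three coordinates, each $-i\gamma\,\widehat q_\eta$ times a fixed rational function of $\gamma$; Parseval over the eigenspaces, the bound on $\|q\|_{\W_x}$ above, and majorizing each coordinate's contribution by its own maximum over $|\gamma|\le\dt/h_x$ produce $\|\mN(\bmmu)\|_\mpM^2\le(C_0+C_1+C_2)\|P_\mpM^\perp e_f^{\ell}\|_\mpM^2$, and combining with the first paragraph yields \eqref{prop:linearized-bgk:HOLO-est:0}. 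I expect the genuinely tedious part to be identifying the three coordinate functions with the stated numerators $\mathcal J_j$ — an explicit computation with the adjugate of $I+i\gamma B$ and the Maxwellian moment matrix $\vbr{\bme\bme^{\top}\mpM}$; everything else transfers mechanically from \Cref{prop:lin_bgk:SI_error,prop:lin_bgk:HOLO_error}.
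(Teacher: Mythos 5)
Your proposal is correct and follows essentially the same path as the paper's proof: the same high-order energy estimate with Young weight $\delta=2$, the same reduction to bounding $\|\mN(\bmmu)\|_\mpM$ through the low-order moment error equation whose scalar source is exactly the paper's $s_2$, and the same spectral computation (your mode-by-mode inversion of $I+i\gamma B$ on the eigenspaces of the normal operator $A$ is equivalent to the paper's diagonalization $B=P\Lambda P^{-1}$ and the operators $A_j^\dagger$, with the same common denominator $\prod_j(1+\lambda_j^2\gamma^2)$). The only deviation is that you orthogonalize the cubic completely, bounding the source via $\vbr{\big((v-u_0)^3-3\theta_0(v-u_0)\big)^2\mpM}=6\theta_0^3$ instead of the paper's $\vbr{(v-u_0)^6\mpM}=15\theta_0^3$, so your constants come out as $\tfrac{2}{5}\mathcal{J}_j$ rather than literally the stated numerators --- a slightly sharper estimate from which \eqref{prop:linearized-bgk:HOLO-est:0} follows a fortiori.
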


\begin{figure}
    \centering
    \begin{subfigure}{0.46\textwidth}
        \rotatebox{90}{\parbox{3cm}{\centering\footnotesize~\hspace{1.5cm}$C_{\mathrm{HL}}/4$}}
        \includegraphics[width=0.925\textwidth]{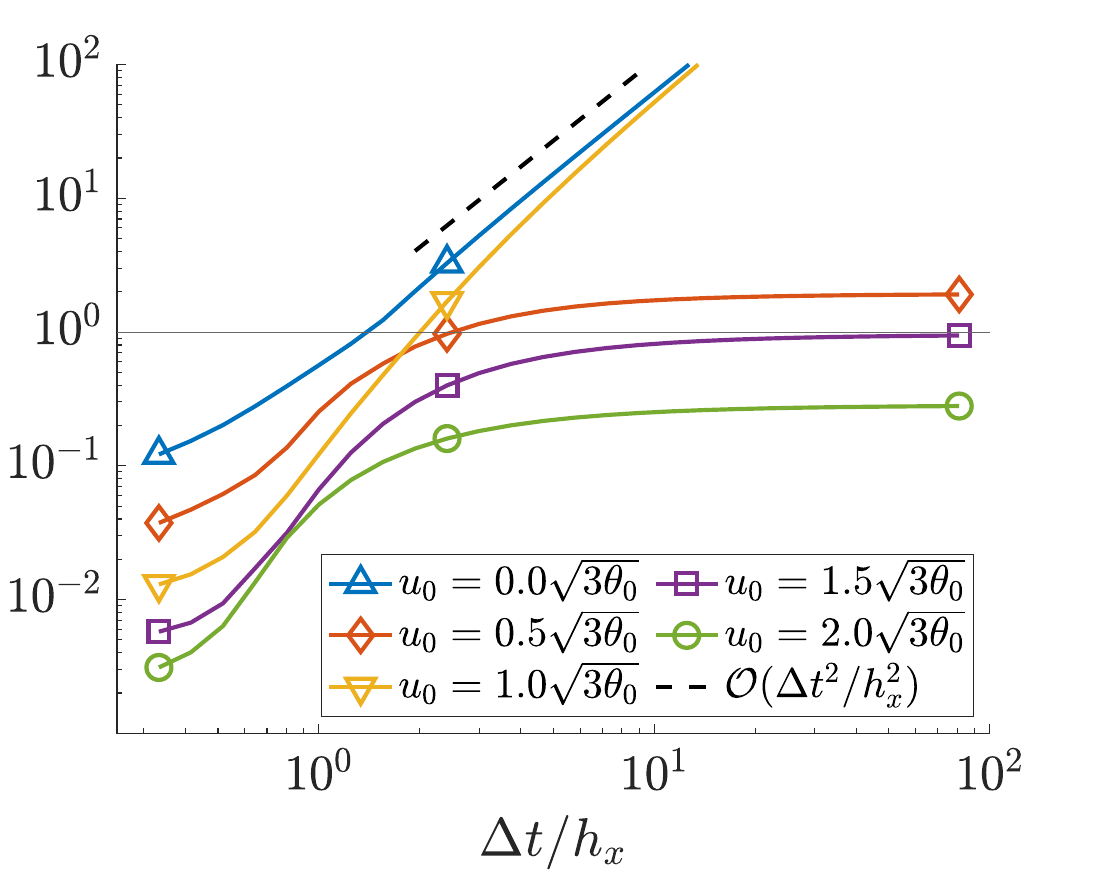}
        \caption{Plot of $C_{\mathrm{HL}}/4$}
        \label{fig:linearized-bgk:constants:C_L}
    \end{subfigure}
    \hspace{20pt}
    \begin{subfigure}{0.46\textwidth}
        \includegraphics[width=\textwidth]{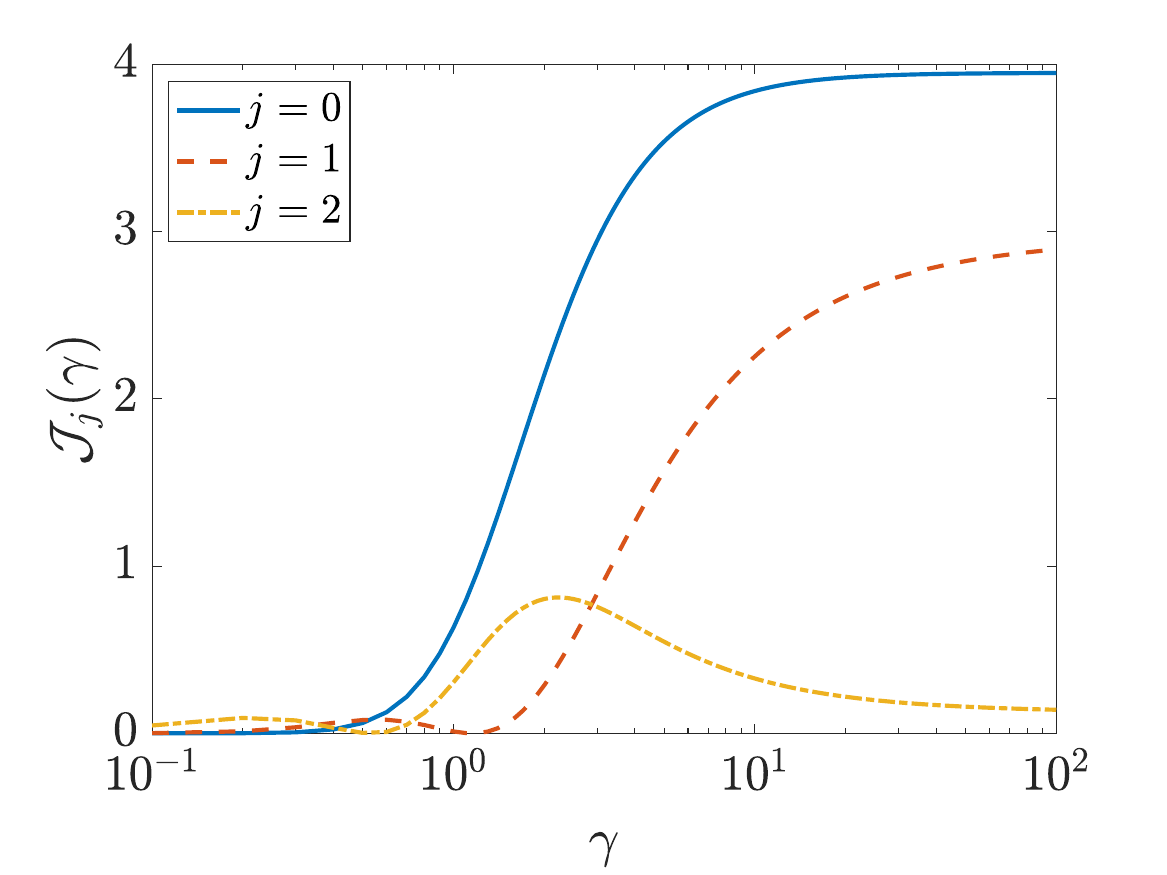}
        \caption{Plot of $\mathcal{J}_i$ where $u_0 = 0.5\sqrt{3\theta_0}$}
        \label{fig:linearized-bgk:constants:Js}
    \end{subfigure}

    \caption{
    Plots of the contraction constant and objective functions in \Cref{prop:linearized-bgk:HOLO-est}.  Here $\theta_0=1$.
    }
    \label{fig:linearized-bgk:constants}
\end{figure}

Before proving \Cref{prop:linearized-bgk:HOLO-est}, we first provide some remarks to give context to the results.
\begin{itemize}
    \item From \eqref{prop:linearized-bgk:HOLO-est:0}, the convergence of the HOLO method is guaranteed for any $\nu$ if $C_{\mathrm{HL}}/4 < 1$.  
    In this sense, the result of \Cref{prop:linearized-bgk:HOLO-est} is similar to the result of  \Cref{prop:lin_bgk:HOLO_error}.
    \item In \Cref{prop:lin_bgk:HOLO_error} if $u_0=0$, then $C_{\tHL}=\mathcal{O}(\sfrac{\dt^2}{h_x^2})$.
    A similar condition holds for \Cref{prop:linearized-bgk:HOLO-est}.
    The worst case scenario for bounding $\mathcal{J}_j$ in \eqref{eqn:linearized-bgk:constants} occurs when
    $u_0=0$ or $u = \pm\sqrt{3\theta_0}$.  In this case, the sixth-order term in the denominators of $\mathcal{J}_j$ vanish, and we expect $C_0$ and consequently $C_{\mathrm{HL}}$ to be $\mathcal{O}(\dt^2/h_x^2)$.
    To verify this claim,
    \Cref{fig:linearized-bgk:constants:C_L} plots $C_{\mathrm{HL}}/4$ as a function of $\dt/h_x$ for $\theta_0=1$ and $u_0$ chosen to be varying multiples of $\sqrt{3\theta_0}$.
    For $u_0=0,\sqrt{3\theta_0}$, $C_{\mathrm{HL}}$ is confirmed to be $\mathcal{O}(\dt^2/h_x^2)$.

    \item For other choices of $u_0$, each $\mathcal{J}_j$ is bounded, but not uniformly.  Unfortunately, we are unable to derive a clean analytic bound $C_{\tHL}$.   Instead, we plot $C_{\tHL}$ as a functions of $\dt/h_x$ for specific values of $u_0$ in \Cref{fig:linearized-bgk:constants:C_L}.
    If $u_0\in\{\tfrac{1}{2}\sqrt{3\theta_0},\tfrac{3}{2}\sqrt{3\theta_0},2\sqrt{3\theta_0}\}$, then $C_{\mathrm{HL}}$ plateaus for large $\dt/h_x$.
    However, since $C_{\mathrm{HL}}/4 > 1$ for $\dt/h_x \geq 3$ in the case of $u_0=\tfrac{1}{2}\sqrt{3\theta_0}$, unconditional convergence of the HOLO method with respect to $\nu$, $\dt$, and $h_x$ is only guaranteed for certain choices of $u_0$ and $\theta_0$.
    
    \item To explore which constant $C_j$ in \eqref{eqn:linearized-bgk:C_L} is the dominant contribution to $C_{\mathrm{HL}}$, we plot $\mathcal{J}_j$ for $j\in\{0,1,2\}$ in \Cref{fig:linearized-bgk:constants:Js} for a specific example of $\theta_0=1$ and $u_0=\tfrac{1}{2}\sqrt{3\theta_0}$.
    For $\gamma\geq 1$, $\mathcal{J}_0$ dominates the contribution to $C_{\mathrm{HL}}$.
    Additionally, while $\mathcal{J}_0$ and $\mathcal{J}_1$ are maximized at their large $\gamma$ plateaus, $\mathcal{J}_2$ obtains its maximum around $\gamma=2$.
\end{itemize}  

\begin{proof}[Proof of \Cref{prop:linearized-bgk:HOLO-est}]
By linearity, error analysis of the HOLO method is equivalent to stability analysis when $f^{\{k\}} = f^{\{k+1\}} = 0$.
Following the proof of \Cref{prop:lin_bgk:HOLO_error}, we have the analog of \eqref{eqn:lin_bgk:HOLO_error:3} with $\delta=2$:
\begin{equation}
        (1+\nu\dt)\|P_\mpM^\perp f^{\ell+1}\|_\mpM^2 \leq \tfrac{\nu\dt}{4}\|\mN(\bmrho^{\ell+1})\|_\mpM^2.
\end{equation}
Direct calculation yields
\begin{align}\label{eqn:linearized_bgk:L_rho_bnd}
\begin{split}
    \|\mN(\bmrho)\|_\mpM^2 &= \|\rho_0\|_{\W_x}^2\vbr{\mpM} + \tfrac{\|\rho_1-u_0\rho_0\|_{\W_x}^2}{\theta_0^2}\vbr{(v-u_0)^2\mpM} \\
    &\quad+ \tfrac{\|2\rho_2-2u_0\rho_1+(u_0^2-\theta_0)\rho_0\|_{\W_x}^2}{\theta_0^2}\vbr{(\tfrac{(v-u_0)^2}{2\theta_0}-\tfrac{1}{2})^2\mpM} \\
    &= \|\rho_0\|_{\W_x}^2 + \tfrac{\|\rho_1-u_0\rho_0\|_{\W_x}^2}{\theta_0} + \tfrac{\|2\rho_2-2u_0\rho_1+(u_0^2-\theta_0)\rho_0\|_{\W_x}^2}{2\theta_0^2} \\
    &:= I_0 + I_1 + I_2.
\end{split}
\end{align}

We seek to bound each $I_i$ using \eqref{eqn:linearized_bgk:lo}.
Note that only the last component, $s_2$, of $\bms:=\vbr{\bme v(f^\ell-\mN(\bmrho_{f^{\ell}}))}$ in \eqref{eqn:linearized_bgk:lo} is non-zero.  Thus we calculate $P^{-1}\bms = s_2\theta_0^{-1}\bmmu$ where $\bmmu=[-1/3,1/6,1/6]^\top$, 
\begin{equation}\label{eqn:linearized_bgk:s_def}
    s_2 = \vbr{\tfrac{1}{2}v^3(f^\ell-\mN(\bmrho_f^\ell))} = \vbr{\tfrac{1}{2}v^3P_\mpM^\perp f^\ell} = \vbr{\tfrac{1}{2}(v-u_0)^3P_\mpM^\perp f^\ell},
\end{equation}
and $P$ is given in \eqref{eq:P}.
The last equality in \eqref{eqn:linearized_bgk:s_def} holds because $v^3$ and $(v-u_0)^3$ differ only by lower-order terms in $v$ that are collision invariants.  We bound $s_2$ by
\begin{equation}\label{eqn:linearized-bgk:s_bound}
    \|s_2\|_{\Omega_x}^2 
    \leq \tfrac{1}{4}\vbr{(v-u_0)^6\mpM}\|P_\mpM^\perp f^\ell\|_\mpM^2 
    = \tfrac{15}{4}\theta_0^3\|P_\mpM^\perp f^\ell\|_\mpM^2.
\end{equation}
Next, setting $\bmeta=P^{-1}\bmrho^{\ell+1}$ and multiplying \eqref{eqn:linearized_bgk:lo} by $P^{-1}$ yields
\begin{equation}
    \eta_j + \lambda_j\dt A\eta_j = -\tfrac{\mu_j}{\theta_0}\dt As_2 
    \implies
    \eta_j = -\tfrac{\mu_j}{\theta_0}(I+\lambda_j\dt A)^{-1}\dt As_2.
\end{equation}
For $j=0,1,2$, define
\begin{equation}
    A^\dagger_j = \textstyle-\sum_{m=0}^2 \frac{\mu_m}{\theta_0}(I+\lambda_m\dt A)^{-1}(\lambda_m)^j\dt A.
\end{equation}
Since $\bmrho^{\ell+1} = P\bmeta$, it follows that
\begin{equation}
\label{eq:rho-to-s2}
\bmrho^{\ell+1} = [2A_0^\dagger s_2,2A_1^\dagger s_2,A_2^\dagger s_2]^\top.
\end{equation}
We use \eqref{eq:rho-to-s2} and \eqref{eqn:linearized-bgk:s_bound} to bound each $I_j$ in \eqref{eqn:linearized_bgk:L_rho_bnd}:
\begin{subequations}\label{eqn:linearized_bgk:I_bnd}
\begin{align}
    I_0 &\leq 4\|A_0^\dagger\|_{\W_x}^2\|s_2\|_{\W_x}^2 \leq \|A_0^\ddagger\|^2\|P_\mpM^\perp f^\ell\|_{\mpM}^2, \\
    I_1 &\leq \frac{4\|A_1^\dagger-u_0A_0^\dagger\|^2\|s_2\|_{\W_x}^2}{\theta_0} \leq \|A_1^\ddagger\|^2\|P_\mpM^\perp f^\ell\|_{\mpM}^2, \\
    I_2 &\leq \frac{4\|A_2^\dagger-2u_0A_1^\dagger+(u_0^2-\theta_0)A_0^\dagger\|^2\|s_2\|_{\W_x}^2}{2\theta_0^2}  \leq \|A_2^\ddagger\|^2\|P_\mpM^\perp f^\ell\|_{\mpM}^2,
\end{align}
\end{subequations}
where
\begin{subequations}
\begin{align}
    A_0^\ddagger &:= \sqrt{15\theta_0^3} A_0^\dagger, \\
    A_1^\ddagger &:= \sqrt{15}\theta_0(A_1^\dagger-u_0A_0^\dagger), \\
    A_2^\ddagger &:= \sqrt{\frac{15}{2}\theta_0}(A_2^\dagger-2u_0A_1^\dagger+(u_0^2-\theta_0)A_0^\dagger).
\end{align}
\end{subequations}

It remains to bound each $A_j^\ddagger$ in \eqref{eqn:linearized_bgk:I_bnd}.  
Note that $A_j^\dagger$, $A_j^\ddagger$ and $A$ are unitarily similar for all $j$.
Denote the spectrum of a matrix $Z$ by $\sigma(Z)$.
Direct calculation of $\sigma(A_j^\dagger)$ yields
\begin{equation} \label{eqn:linearized-bgk:A_dag_eval}
    \sigma(A_j^\dagger) = \Big\{
    \tfrac{(\dt\gamma)^{3-j}}{(1+u_0\dt\gamma i)(1+(u_0+\sqrt{3\theta_0})\dt\gamma i)(1+(u_0-\sqrt{3\theta_0})\dt\gamma i)}:\gamma i\in\sigma(A)\Big\}.
\end{equation}
%%%%%%%%
% Derivation: 
%   https://www.wolframalpha.com/input?i=simplify+-a%5E2*i%2F%283*%281%2Bi*a*z%29%29%2B%28a%2Bb%29%5E2*i%2F%286*%281%2Bi*%28a%2Bb%29z%29%29%2B%28a-b%29%5E2*i%2F%286*%281%2Bi*%28a-b%29*z%29%29
% where a = u_0, b = \sqrt(3\theta_0), and z=\dt\gamma.
%%%%%%%%%
Moreover, since $A_j^\ddagger$ are linear combinations of $\{A_m^\dagger\}_{m=0}^2$, using \eqref{eqn:linearized-bgk:A_dag_eval} yields
\begin{equation} \label{eqn:linearized-bgk:A_ddag_eval}
    \Big\{ |\gamma|^2:\gamma \in\sigma(A_j^\ddagger)\Big\} = \Big\{ \mathcal{J}_j(\dt\gamma):\gamma i\in\sigma(A)\Big\},
\end{equation}
where $\mathcal{J}_0$, $\mathcal{J}_1$, and $\mathcal{J}_2$ are defined in \eqref{eqn:linearized-bgk:constants}.

Noting that $A_j^\ddagger$ is normal, we use \eqref{eqn:linearized-bgk:A_ddag_eval} and a rescaling to obtain
\begin{equation}
    \|A_j^\ddagger\|^2 = \max_{\gamma\in\sigma(A_j^\ddagger)}|\gamma|^2 = \max_{\gamma i\in\sigma(A)} \mathcal{J}_j(\dt\gamma) \leq \max_{|\gamma|<\sfrac{1\!}{h_x}} \mathcal{J}_j(\dt\gamma) = \max_{|\gamma|<\sfrac{\dt\!}{h_x}} \mathcal{J}_j(\gamma) = C_j,
\end{equation}
where $C_0$, $C_1$ and $C_2$ are defined in \eqref{eqn:linearized-bgk:constants}.
The proof is complete.
\end{proof}

\end{document}